\tikzset{ext/.style={circle, draw,inner sep=1pt},int/.style={circle,draw,fill,inner sep=1pt},nil/.style={inner sep=1pt}}
\tikzset{exte/.style={circle, draw,inner sep=3pt},inte/.style={circle,draw,fill,inner sep=3pt}}
\tikzset{diagram/.style={matrix of math nodes, row sep=3em, column sep=2.5em, text height=1.5ex, text depth=0.25ex}}
\tikzset{diagram2/.style={matrix of math nodes, row sep=0.5em, column sep=0.5em, text height=1.5ex, text depth=0.25ex}}
\theoremstyle{plain}
  \newtheorem{thm}{Theorem}[section]
  \newtheorem{defi}[thm]{Definition}
  \newtheorem{prop}[thm]{Proposition}
  \newtheorem{cor}[thm]{Corollary}
  \newtheorem{lemma}[thm]{Lemma}
\theoremstyle{definition}
  \newtheorem{ex}[thm]{Example}
  \newtheorem{rem}[thm]{Remark}
\newcommand{\alg}[1]{\mathfrak{{#1}}}
\newcommand{\ad}{{\text{ad}}}
\newcommand{\R}{{\mathbb{R}}}
\newcommand{\K}{{\mathbb{K}}}
\newcommand{\Q}{{\mathbb{Q}}}
\newcommand{\HGC}{{\mathrm{HGC}}}
\newcommand{\Graphs}{{\mathsf{Graphs}}}
\newcommand{\Conv}{\mathrm{Conv}}
\newcommand{\fHGC}{{\mathrm{fHGC}}}
\newcommand{\mF}{\mathcal{F}}
\newcommand{\bbS}{\mathbb{S}}
\newcommand{\Tw}{\mathit{Tw}}
\newcommand{\Poiss}{\mathsf{Pois}}
\newcommand{\op}{\mathcal}
\newcommand{\Lie}{\mathsf{Lie}}
\newcommand{\hoLie}{\mathsf{hoLie}}
\newcommand{\Ass}{\mathsf{Assoc}}
\newcommand{\Com}{\mathsf{Com}}
\newcommand{\FM}{\mathsf{FM}}
\newcommand{\conn}{\mathit{conn}}
\newcommand{\bpm}{\begin{pmatrix}}
\newcommand{\epm}{\end{pmatrix}}
\newcommand{\GC}{\mathrm{GC}}
\newcommand{\MC}{\mathsf{MC}}
\newcommand{\lD}{\mathsf{D}}
\newcommand{\Map}{\mathrm{Map}}
\newcommand{\beq}[1]{\begin{equation}\label{#1}}
\newcommand{\eeq}{\end{equation}}
\newcommand{\fg}{\mathfrak{g}}
\newcommand{\RT}{\mathsf{RT}}
\newcommand{\TRT}{\mathsf{TRT}}
\newcommand{\PL}{\mathsf{PL}}
\newcommand{\PLM}{\mathsf{PLM}}
\newcommand{\RTM}{\mathsf{RTM}}
\newcommand{\Exp}{\mathsf{Exp}}
\newcommand{\oMC}{\overline{\MC}}
\newcommand{\closed}{\mathrm{closed}}
\newcommand{\gexp}{\mathrm{gexp}}
\newcommand{\BCH}{\mathrm{BCH}}
\begin{document}
\title{Pre-Lie pairs and triviality of the Lie bracket on the twisted hairy graph complexes}

%

 \author{Thomas Willwacher}
\address{Department of Mathematics \\ ETH Zurich \\  
R\"amistrasse 101 \\
8092 Zurich, Switzerland}
\email{thomas.willwacher@math.ethz.ch}

\thanks{The author has been partially supported by the Swiss National Science foundation, grant 200021-150012, the NCCR SwissMAP funded by the Swiss National Science foundation, and the ERC starting grant GRAPHCPX (678156).}


\begin{abstract}
We study pre-Lie pairs, by which we mean a pair of a homotopy Lie algebra and a pre-Lie algebra with a compatible pre-Lie action. Such pairs provide a wealth of algebraic structure, which in particular can be used to analyze the homotopy Lie part of the pair. 

Our main application and the main motivation for this development are the dg Lie algebras of hairy graphs computing the rational homotopy groups of the mapping spaces of the $E_n$ operads. We show that twisting with certain Maurer-Cartan elements trivializes their Lie algebra structure. The result can be used to understand the rational homotopy type of many connected components of the mapping spaces of $E_n$ operads. 
\end{abstract}

\maketitle

\section{Introduction}
We consider the hairy graph complexes $\HGC_{m,n}$, whose elements are formal series of isomorphism classes of undirected graphs with ``hairs'' (or ``legs'', as physicists would prefer), for example
\beq{equ:hairysample}
 \begin{tikzpicture}[scale=.5]
 \draw (0,0) circle (1);
 \draw (-180:1) node[int]{} -- +(-1.2,0);
 \end{tikzpicture}
,\quad
\begin{tikzpicture}[scale=.6]
\node[int] (v) at (0,0){};
\draw (v) -- +(90:1) (v) -- ++(210:1) (v) -- ++(-30:1);
\end{tikzpicture}
\,,\quad
\begin{tikzpicture}[scale=.5]
\node[int] (v1) at (-1,0){};\node[int] (v2) at (0,1){};\node[int] (v3) at (1,0){};\node[int] (v4) at (0,-1){};
\draw (v1)  edge (v2) edge (v4) -- +(-1.3,0) (v2) edge (v4) (v3) edge (v2) edge (v4) -- +(1.3,0);
\end{tikzpicture}
 \, ,\quad
 \begin{tikzpicture}[scale=.6]
\node[int] (v1) at (0,0){};\node[int] (v2) at (180:1){};\node[int] (v3) at (60:1){};\node[int] (v4) at (-60:1){};
\draw (v1) edge (v2) edge (v3) edge (v4) (v2)edge (v3) edge (v4)  -- +(180:1.3) (v3)edge (v4);
\end{tikzpicture}
 \, .
 \eeq
The differential on $\HGC_{m,n}$ is given by vertex splitting. Furthermore, there is a compatible Lie bracket given combinatorially by attaching a hair of one graph to another:
\[
\left[ 
\begin{tikzpicture}[baseline=-.8ex]
\node[draw,circle] (v) at (0,.3) {$\Gamma$};
\draw (v) edge +(-.5,-.7) edge +(0,-.7) edge +(.5,-.7);
\end{tikzpicture}
,
\begin{tikzpicture}[baseline=-.65ex]
\node[draw,circle] (v) at (0,.3) {$\Gamma'$};
\draw (v) edge +(-.5,-.7) edge +(0,-.7) edge +(.5,-.7);
\end{tikzpicture}
\right]
=
\sum
\begin{tikzpicture}[baseline=-.8ex]
\node[draw,circle] (v) at (0,1) {$\Gamma$};
\node[draw,circle] (w) at (.8,.3) {$\Gamma'$};
\draw (v) edge +(-.5,-.7) edge +(0,-.7) edge (w);
\draw (w) edge +(-.5,-.7) edge +(0,-.7) edge +(.5,-.7);
\end{tikzpicture}
\pm 
\sum
\begin{tikzpicture}[baseline=-.8ex]
\node[draw,circle] (v) at (0,1) {$\Gamma'$};
\node[draw,circle] (w) at (.8,.3) {$\Gamma$};
\draw (v) edge +(-.5,-.7) edge +(0,-.7) edge (w);
\draw (w) edge +(-.5,-.7) edge +(0,-.7) edge +(.5,-.7);
\end{tikzpicture}
\]

If $m=n$ there is the following Maurer-Cartan element with one edge and no vertices.
\beq{equ:linegraph}
L= 
\begin{tikzpicture}[baseline=-.65ex]
\draw (0,-.2) -- (1,0.2);
\end{tikzpicture}
 \in \HGC_{n,n}
\eeq
Consider the twisted dg Lie algebra $\HGC_{n,n}^L$. The following result is known.
\begin{thm}[\cite{Will, FW}]\label{thm:pre1}
There is a quasi-isomorphism of complexes 
\beq{equ:Lmap}
\K[1]\oplus \GC_n^2[1] \to \HGC_{n,n}^L,
\eeq
where $\GC_n^2$ is the non-hairy graph complex, cf. \cite{TW} or see section \ref{sec:HGC} below.
\end{thm}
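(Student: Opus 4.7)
The plan is to construct the map $\Phi$ in \eqref{equ:Lmap} explicitly and then verify that it is a quasi-isomorphism by a spectral sequence argument.

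First, I construct $\Phi$. The generator of $\K[1]$ is sent to the Maurer-Cartan element $L\in \HGC_{n,n}^L$, which is automatically $d_L$-closed by the Maurer-Cartan equation $dL+\tfrac{1}{2}[L,L]=0$. For a graph $\gamma\in \GC_n^2$ the naive inclusion into $\HGC_{n,n}^L$ as a hair-free element is not $d_L$-closed, since $d_L\gamma=d\gamma+[L,\gamma]$ and $[L,\gamma]=\sum_{v\in V(\gamma)}\pm(\gamma\text{ with a hair attached at }v)$ is typically nonzero. I correct this by the Ansatz $\Phi(\gamma)=\gamma+\gamma^{(1)}+\gamma^{(2)}+\cdots$, where $\gamma^{(k)}$ is an element of hair-degree $k$, chosen recursively so that $d_L\Phi(\gamma)=\Phi(d\gamma)$. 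Order by order this gives the equations $d\gamma^{(k)}+[L,\gamma^{(k-1)}]=(d\gamma)^{(k)}$, which can be solved by obstruction theory once a chain-level contracting homotopy for the hair-attachment operator on the relevant subcomplex has been fixed.

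To verify that $\Phi$ is a quasi-isomorphism, I filter $\HGC_{n,n}^L$ ascendingly by the number of hairs and place the source $\K[1]\oplus\GC_n^2[1]$ entirely in hair-degree zero. Since vertex-splitting $d$ preserves the hair count while $[L,-]$ raises it by one, the $E^0$-differential on $\HGC_{n,n}^L$ is the untwisted differential $d$, and the first page differential is induced by $[L,-]$. On $E^0$, the map $\Phi$ restricts to the inclusion of the zero-hair subcomplex of $\HGC_{n,n}$ (whose cohomology recovers the $\GC_n^2[1]$-summand) together with the class of $L$ in a separate slot, accounting for the $\K[1]$-summand. Convergence of the spectral sequence is ensured by boundedness of the filtration in each fixed loop order.

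The main obstacle is the $E^1$-page computation: one must show that the induced $[L,-]$-action on $H(\HGC_{n,n},d)$ kills everything of positive hair-degree, so that $E^1\cong\K[1]\oplus\GC_n^2[1]$ and the spectral sequence degenerates. A natural approach is to exploit the description of $H(\HGC_{n,n},d)$ in terms of oriented graph complexes (which is the main input of \cite{Will,FW}), where $[L,-]$ takes an explicit combinatorial form on representatives and an explicit contracting homotopy on the positive-hair part can be written down. Sign and parity bookkeeping (depending on $n$) require care but are routine, and the existence of the lift $\Phi$ constructed in the first step then follows automatically from the acyclicity of the relevant obstruction complex.
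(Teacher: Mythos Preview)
The paper does not prove Theorem~\ref{thm:pre1}; it is quoted as a known result from \cite{Will, FW}, and the paper only \emph{uses} the map \eqref{equ:Lmap} (constructing it again via the pre-Lie pair formalism as the linear part $U_1$ of the $L_\infty$-morphism in Theorem~\ref{thm:main_alg2}). So there is no in-paper proof to compare against. That said, your proposal contains a genuine misconception that would derail the construction.

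Your starting point is to include $\gamma\in\GC_n^2$ into $\HGC_{n,n}$ as a ``hair-free element'' and then correct by higher-hair terms. But $\HGC_{n,n}$ has no hair-free elements: by definition it sits inside $\prod_{r\geq 1}\Graphs_n\{n\}^{\bbS_r}$, so every graph has at least one external (hair) vertex. The actual map \eqref{equ:Lmap} sends $\gamma$ to the \emph{one-hair} element
\[
\gamma\ \longmapsto\ \sum_{v\in V(\gamma)}\pm\big(\gamma\text{ with a single hair attached at }v\big),
\]
which in the paper's language is $U_1(\gamma)=m\circ\gamma$ with $m$ the one-vertex-one-hair graph \eqref{equ:smallm} (the $L$-part of the Maurer--Cartan element contributes nothing since $L$ has no internal vertices). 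Moreover, this one-hair map is \emph{already} a chain map to $\HGC_{n,n}^L$ --- no infinite tower of corrections $\gamma^{(k)}$ is needed. In your Ansatz the apparent need for corrections arises only because you start in the nonexistent hair-degree zero; once you start in hair-degree one the obstruction vanishes. (The $\K[1]$-generator is indeed sent to $L$, which has two hairs, not zero.)

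For the quasi-isomorphism part, your spectral-sequence outline is the standard approach used in \cite{Will,FW}, but you correctly identify and then essentially defer the only nontrivial step: showing that, after passing to the associated graded by hair number (or an equivalent filtration), the positive-hair part is acyclic under the piece of the differential induced by $[L,-]$. That computation \emph{is} the content of the theorem; invoking ``the description of $H(\HGC_{n,n},d)$ in terms of oriented graph complexes'' as input is circular, since that description is precisely what is established alongside Theorem~\ref{thm:pre1} in those references. So as written, your proposal both misidentifies the map and does not supply the key acyclicity argument.
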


Similarly, for $m=n-1$ there is also a Maurer-Cartan element
\beq{equ:tripodMC}
T:=  \sum_{k\geq 1} 
 \underbrace{
\begin{tikzpicture}[baseline=-.65ex]
\node[int] (v) at (0,0) {};
\draw (v) edge +(-.5,-.5)  edge +(-.3,-.5) edge +(0,-.5) edge +(.3, -.5) edge +(.5,-.5);
\end{tikzpicture}
}_{2k+1}
\in \HGC_{n-1,n}. 
\eeq
The following result has been shown in \cite{TW}.

\begin{thm}[\cite{TW}]\label{thm:pre2}
There is a quasi-isomorphism of complexes 
\beq{equ:Tmap}
\K[1]\oplus \GC_n^2[1] \to \HGC_{n-1,n}^T.
\eeq
\end{thm}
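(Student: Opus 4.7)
The plan is to follow the proof of the analogous Theorem~\ref{thm:pre1} from \cite{Will, FW}, adapting it to the tripod Maurer-Cartan element $T$.

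First I would define the chain map $\phi : \K[1] \oplus \GC_n^2[1] \to \HGC_{n-1,n}^T$ explicitly. On the $\GC_n^2[1]$ summand, send a non-hairy graph $\Gamma$ to itself, regarded as a hairless hairy graph in $\HGC_{n-1,n}$. Since the Lie bracket glues hairs from both arguments, $[T,\Gamma]=0$ for any hairless $\Gamma$, so this embedding is automatically a chain map with respect to $d+[T,-]$. On the $\K[1]$ summand, send the generator to an appropriate exceptional cocycle built from small tripod-like pieces; a natural first guess is a combination involving the $k=1$ summand of $T$, corrected so that the $d+[T,-]$-image vanishes.

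Second, I would set up a spectral sequence on $\HGC_{n-1,n}^T$ using the decreasing filtration by hair count,
\[
F^p = \{\Gamma : \#\mathrm{hairs}(\Gamma) \geq p\}.
\]
Because every component of $T$ has at least three hairs and a bracket merges two hairs into an edge, $[T,-]$ strictly raises the hair count, while the vertex-splitting differential $d$ preserves it. The $E_0$-page is therefore the untwisted hairy graph complex $(\HGC_{n-1,n},d)$. I would then show that its cohomology is concentrated on the image of $\phi$: the hairless part is exactly $\GC_n^2$ (up to the degree shift), and a contracting homotopy on the positive-hair subcomplex, e.g.\ picking a distinguished hair and "integrating out" its removal in the style of \cite{Will, FW}, shows that all but one exceptional class is acyclic. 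Identifying this surviving class with the image of the $\K[1]$ generator under $\phi$ then identifies $E_1 \cong \K[1] \oplus \GC_n^2[1]$, and since $\phi$ hits $E_1$ isomorphically by construction, the spectral sequence degenerates and $\phi$ is a quasi-isomorphism.

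The main obstacle is constructing the contracting homotopy on the positive-hair subcomplex and identifying the one exceptional surviving class; in the $L$-case of Theorem~\ref{thm:pre1} the single-edge structure of $L$ makes this relatively transparent, whereas here the infinitely many components of $T$ entangle more of the structure. Convergence of the spectral sequence must also be checked, since $\HGC_{n-1,n}^T$ is a completed product over graph topologies: one verifies that the hair-count filtration is complete and Hausdorff (the intersection $\bigcap_p F^p$ is trivial because any fixed graph has finitely many hairs, and the inverse limit of the quotients reconstructs the formal-series completion), so that a standard complete-filtration convergence theorem applies and the degeneration at $E_1$ implies the claim.
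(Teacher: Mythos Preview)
The paper does not give a self-contained proof of this statement; it is quoted from \cite{TW}. However, the paper does (in the proof of Theorem~\ref{thm:main}) construct the map explicitly as the linear part $U_1$ of its $L_\infty$-morphism: for $\gamma\in\GC_n^2$ one has $U_1(\gamma)=(m_0+T)\circ\gamma$, where $m_0$ is the single-hair graph \eqref{equ:smallm} and $\circ$ is the pre-Lie action by insertion at internal vertices. Concretely this sends $\gamma$ to the sum over all ways of attaching an odd number of hairs at each vertex of $\gamma$. This already shows that your proposed map is wrong at the outset: there are \emph{no hairless elements} in $\HGC_{n-1,n}$, since by definition $\fHGC_{m,n}\subset\prod_{r\ge 1}\Graphs_n\{m\}^{\bbS_r}$ consists of graphs with at least one external (hair) vertex. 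So ``send $\Gamma$ to itself, regarded as a hairless hairy graph'' is not a map into $\HGC_{n-1,n}$, and the equation $[T,\Gamma]=0$ is vacuous.

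The spectral sequence argument has a more serious problem. Your filtration by hair count is legitimate, and you correctly observe that $[T,-]$ raises hair count so that the $E_0$-page is the \emph{untwisted} complex $(\HGC_{n-1,n},d)$. But the cohomology of the untwisted hairy graph complex is not $\K\oplus\GC_n^2$; it is large and complicated (it computes rational homotopy of high-dimensional long knot spaces, cf.\ \cite{Turchin2,Turchin3,TW2}). There is no contracting homotopy on the ``positive-hair subcomplex'' --- that subcomplex is the entire complex, and it has plenty of cohomology. The whole content of Theorem~\ref{thm:pre2} is that the twisted piece $[T,-]$ kills this excess cohomology; in your filtration this piece appears only through the higher differentials $d_2,d_4,\dots$, so the spectral sequence cannot possibly degenerate at $E_1$. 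You would have to analyze infinitely many pages, and with the differential moving in the unbounded direction of the filtration, convergence becomes genuinely delicate rather than routine.

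The argument in \cite{TW} runs in the opposite direction: one filters so that the \emph{leading} piece of the twisted differential is the part of $[T,-]$ coming from the tripod (the $k=1$ summand), and shows that on the associated graded this already forces the cohomology down to $\K\oplus\GC_n^2$. The correct map, attaching hairs, is then visibly a quasi-isomorphism on the first page, and degeneration follows.
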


Finally, suppose that $m=1$. Then there is a special $L_\infty$-structure on $\HGC_{1,n}$, called the Shoikhet $L_\infty$-structure, such that the nerve of $\HGC_{1,n}$ with that $L_\infty$-structure is weakly equivalent to the (derived) mapping space of the little discs operads $\Map^h(\lD_1\to  \lD_n^{\Q})$ \cite{FTW}, where $\lD_n$ shall denote the little $n$-discs operad and $\lD_n^\Q$ its rationalization. Let us denote the space $\HGC_{1,n}$ equipped with the Shoikhet $L_\infty$-structure by $\HGC_{1,n}'$.

There is hence a Maurer-Cartan element $T'\in \HGC_{1,2}'$ which corresponds to the "natural" embedding $\lD_1\to \lD_2$. Concretely, as seen in \cite{TW}
\[
T'= \frac 1 4
\begin{tikzpicture}[baseline=-.65ex]
\node[int] (v) at (0,0) {};
\draw (v) edge +(-.5,-.5)  
edge +(0,-.5) 
edge +(.5,-.5);
\end{tikzpicture}
+(\cdots)
\]
The following result can be obtained from \cite{TW}.
\begin{thm}[\cite{TW}]\label{thm:pre3}
There is a quasi-isomorphism of complexes 
\beq{equ:TTmap}
\K[1]\oplus \GC_2^2[1] \to (\HGC_{1,2}')^{T'}.
\eeq
\end{thm}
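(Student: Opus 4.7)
The plan is to follow the same general approach used for Theorems~\ref{thm:pre1} and~\ref{thm:pre2} in \cite{Will, FW, TW}, with modifications to accommodate the Shoikhet $L_\infty$-structure on $\HGC_{1,2}$.

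First, I would define the map \eqref{equ:TTmap} explicitly, in direct analogy with \eqref{equ:Lmap} and \eqref{equ:Tmap}: the generator of $\K[1]$ is sent to a distinguished hairy graph (the same one appearing in the two earlier theorems), and a non-hairy graph $\Gamma \in \GC_2^2$ is sent to $\Gamma$ viewed as a hair-free element of $\HGC_{1,2}$. The chain-map property against the twisted differential $d + \sum_{k \geq 1} \frac{1}{k!}\ell^{Sh}_{k+1}(T',\ldots,T',-)$ is verified by a direct combinatorial computation; the key observation is that the Shoikhet operations pair hairs of the copies of $T'$ with hairs of the remaining input, so applied to a hairless input only the internal vertex-splitting differential contributes.

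Next, I would filter $(\HGC_{1,2}')^{T'}$ by the number of hairs and study the associated spectral sequence. Vertex splitting preserves the hair count, while each Shoikhet operation $\ell^{Sh}_{k+1}(T',\ldots,T',-)$ strictly increases it, since $T'$ is supported on graphs with at least three hairs per insertion and hair-joining removes only two at a time. On the $E^0$ page the differential is pure vertex splitting, whose cohomology is accessible from results of \cite{TW}. On $E^1$ the leading bracket with $T'$ — which, by the explicit form $T' = \frac{1}{4}\cdot\text{tripod} + \cdots$, is exactly the leading contribution appearing in the proof of Theorem~\ref{thm:pre2} for $n=2$ — plays the role of the first-page differential, and the remaining analysis proceeds in parallel with that of Theorem~\ref{thm:pre2}.

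The main obstacle is the precise control of the higher Shoikhet brackets $\ell^{Sh}_k$ for $k \geq 3$. These are defined via configuration space integrals, so their combinatorial leading behaviour must be carefully extracted in order to confirm that they strictly raise the filtration degree and do not produce spurious classes on higher pages of the spectral sequence. A related difficulty is that only the leading term of $T'$ is written out explicitly, and the tail $+\cdots$ must be shown to be consistent with collapse of the spectral sequence. Both difficulties amount to a comparison between the Shoikhet and standard $L_\infty$-structures near $T'$ versus $T$, which is the key technical ingredient beyond the content of Theorem~\ref{thm:pre2}.
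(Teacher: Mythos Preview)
The paper does not prove this statement; it is quoted from \cite{TW} without argument. What the paper does do (in Section~6) is construct an $L_\infty$-morphism whose linear part it identifies with the map of Theorem~\ref{thm:pre3}, so one can read off from there what that map actually is: a graph $\gamma\in\GC_2^2$ is sent to $(m+T')\circ\gamma$, where $m$ is the single-vertex one-hair graph \eqref{equ:smallm} and $\circ$ is the right pre-Lie action of $\GC_2$ on $\HGC_{1,2}$ (insert $\gamma$ into the unique internal vertex of each star in $m+T'$ and reconnect). The $\K[1]$ summand is mapped via the derivation $D$ applied to $m+T'$.

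Your proposal contains a genuine error at the very first step: you send $\Gamma\in\GC_2^2$ to ``$\Gamma$ viewed as a hair-free element of $\HGC_{1,2}$.'' There are no hair-free elements of $\HGC_{1,2}$; by definition $\HGC_{m,n}\subset\prod_{r\geq 1}\Graphs_n\{m\}(r)^{\bbS_r}$ consists of graphs with at least one external (hair) vertex. The map must produce hairy graphs, and it does so by the pre-Lie action just described. Consequently your verification of the chain-map property (``applied to a hairless input only the internal vertex-splitting differential contributes'') is vacuous, and the spectral sequence argument that follows is built on the wrong image. Once the map is corrected, the filtration/spectral sequence strategy you outline is indeed the one used in \cite{TW} for Theorem~\ref{thm:pre2}, and the passage to the Shoikhet structure is handled there as well; but the starting point has to be fixed first.
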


The main result of this short note is the following.
\begin{thm}\label{thm:main}
The maps \eqref{equ:Lmap}, \eqref{equ:Tmap} and \eqref{equ:TTmap} can be extended to $L_\infty$-quasi-isomorphisms
\begin{align}
U \colon & \K[1]\oplus \GC_n^2[1] \to \HGC_{n,n}^L \\
V \colon & \K[1]\oplus \GC_n^2[1] \to \HGC_{n-1,n}^T \\
V' \colon & \K[1]\oplus \GC_2^2[1] \to (\HGC_{1,2}')^{T'},
\end{align}
where the left hand side is considered as an abelian dg Lie algebra, i.e., equipped with the trivial Lie bracket.
In particular, in each case the right-hand side is formal as an $L_\infty$-algebra.
\end{thm}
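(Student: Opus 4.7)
My proof plan follows the framework announced in the title. The starting observation is that the Lie bracket on $\HGC_{m,n}$ depicted in the introduction is the antisymmetrization of an underlying \emph{pre-Lie} product $\star$: define $\Gamma \star \Gamma'$ by summing over ways of grafting a single hair of $\Gamma$ onto a vertex of $\Gamma'$ (so that $\Gamma$ loses a hair and $\Gamma'$ keeps its own); then
\[
[\Gamma,\Gamma'] = \Gamma \star \Gamma' \pm \Gamma' \star \Gamma.
\]
Together with the graph insertion action of non-hairy graphs from $\GC_n^2$, this product realises $\HGC_{m,n}$ as (part of) a pre-Lie pair in the sense announced in the abstract.

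The heart of the argument is an abstract triviality statement for pre-Lie pairs: if $(\fg, P)$ is a pre-Lie pair with $\fg$ a homotopy Lie algebra and $P$ a pre-Lie algebra acting compatibly, and if $m\in \fg$ is a Maurer--Cartan element arising from the pre-Lie action of some element $p\in P$, then $\fg^m$ is formal as an $L_\infty$-algebra, and more precisely there is an explicit $L_\infty$-quasi-isomorphism from $H^\bullet(\fg^m)$, equipped with the \emph{abelian} $L_\infty$-structure, to $\fg^m$. I would construct this morphism via a pre-Lie exponential $\exp_\star$ together with a pre-Lie analogue of the Baker--Campbell--Hausdorff formula, exhibiting a gauge transformation that absorbs the higher brackets on $\fg^m$ into iterated pre-Lie actions by $p$ — which are in turn null-homotopic on cohomology.

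Granting this abstract theorem, Theorem \ref{thm:main} reduces to three concrete verifications. In each case the Maurer--Cartan element ($L$, $T$, or $T'$) is visibly a sum of graphs with a single vertex and several hairs, hence manifestly in the image of the pre-Lie action of $\GC_n^2$-style elements on hairs. The cohomology of the corresponding twisted complex is known to be $\K[1]\oplus \GC_n^2[1]$ by Theorems \ref{thm:pre1}, \ref{thm:pre2}, \ref{thm:pre3}. By naturality of the abstract construction, the linear component of the resulting $L_\infty$-morphism coincides with the maps \eqref{equ:Lmap}, \eqref{equ:Tmap}, \eqref{equ:TTmap}, completing the three assertions.

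The main obstacle is the third case, which concerns the Shoikhet $L_\infty$-structure on $\HGC_{1,2}'$ rather than an honest dg Lie bracket. The pre-Lie pair formalism must therefore be formulated at the level of $L_\infty$-algebras (for instance, by encoding the action via coderivations on the cofree cocommutative coalgebra), and one must verify that the higher Shoikhet brackets remain compatible with the pre-Lie product $\star$ in the strong sense required by the abstract theorem. A secondary but non-trivial difficulty is the bookkeeping of signs, degree shifts and combinatorial factors needed to make the abstract output reproduce the prefactors and conventions of $L$, $T$, and $T'$ uniformly across the three cases.
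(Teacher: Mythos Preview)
Your plan has the right shape—pre-Lie pair, abstract $L_\infty$ morphism, pre-Lie exponential trivialization, then match the linear part against Theorems~\ref{thm:pre1}--\ref{thm:pre3}—but the abstract input is misidentified in a way that breaks the argument.

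The pre-Lie product that matters is not the hair-grafting product $\star$ on $\HGC_{m,n}$. The relevant pre-Lie pair is $(\fg,M)=(\GC_n^1,\HGC_{m,n}^1)$: the pre-Lie algebra is $\GC_n^1$ with its graph-insertion product $\bullet$, acting on the $L_\infty$-algebra $\HGC_{m,n}^1$ by insertion into internal vertices (the action $\circ$). The abstract result (Theorem~\ref{thm:main_alg}) takes \emph{two} Maurer--Cartan elements, $\alpha\in\fg$ and $m\in M^\alpha$, and produces the $L_\infty$ morphism $U_r(x_1,\dots,x_r)=m\circ(x_1,\dots,x_r)$ from $(\fg^\alpha[1],\nu_r)$ to $M^{\alpha,m}$; the auxiliary $\hoLie_2$-structure $\nu_r(x_1,\dots,x_r)=\alpha\bullet(x_1,\dots,x_r)$ on $\fg^\alpha[1]$ is then trivialized via the pre-Lie exponential of Proposition~\ref{prop:Ltrivialexpl}. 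In the application $\alpha$ is the edge graph \eqref{equ:MCalpha} and $m$ is the one-hair graph \eqref{equ:smallm} \emph{plus} one of $L,T,T'$. Your condition ``$m$ arises from the pre-Lie action of some $p\in P$'' has no analogue here and is not satisfied---note that $L$ has no internal vertices at all, so it is certainly not in the image of any insertion action. And if instead you tried to run Proposition~\ref{prop:Ltrivial} directly on $(\HGC,\star)$ with Maurer--Cartan element $L$, you would trivialize the structure $\nu_r(x_1,\dots,x_r)=L\star(x_1,\dots,x_r)$, which is \emph{not} the twisted Lie bracket on $\HGC^L$ that the theorem concerns.

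Two further omissions. One must begin in $\GC_n^1$ (allowing univalent vertices) so that $\alpha$ exists and encodes the standard differential, and only afterwards verify that the construction restricts to the $\geq$bivalent part $\GC_n^2\to\HGC_{m,n}^2$. And the summand $\K[1]$ is not produced by the pre-Lie pair alone: it comes from an additional compatible derivation $D$ (the grading operator by edges minus internal vertices), incorporated via a separate extension of the abstract theorem (Theorem~\ref{thm:main_alg2}); your sketch does not account for it.
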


This result is used in \cite{FTW} to compute the homotopy type of (certain components of) the function spaces $\Map^h(\lD_m,\lD_n^{\Q})$ in codimension $n-m=0$ and $n-m=1$.

We will derive Theorem \ref{thm:main} in a very general algebraic context, that is of some interest in its own right, and that we expect to be applicable to other problems in deformation theory as well.
To this end we introduce the notion of pre-Lie pair $(\alg g,M)$, by which we mean a pre-Lie algebra $\alg g$ and an $L_\infty$-algebra $M$, with a compatible right pre-Lie action of $\alg g$ on $M$. This specializes the more standard example of $\alg g$ being a Lie algebra. However, the pre-Lie structure allows for certain new algebraic constructions.
In particular, if $\alpha\in \alg g$ is a Maurer-Cartan element and $m\in M^\alpha$ is a Maurer-Cartan element in the $\alpha$-twist $M^\alpha$ of $M$, then we can define a natural $L_\infty$-structure on $\alg g^\alpha[1]$ together with a natural $L_\infty$-morphism 
\[
 \alg g^\alpha[1] \to M^{\alpha,m}
\]
into the $L_\infty$-algebra $M^{\alpha,m}$ obtained by twisting $M^\alpha$ by $m$.
Furthermore, using the pre-Lie structure on $\alg g$ the $L_\infty$-structure on $\alg g^\alpha[1]$ may be trivialized, thus uncovering a trivial $L_\infty$-subalgebra of $M^{\alpha,m}$.

In the examples of the hairy graph complexes it turns out that this trivial $L_\infty$-subalgebra is in fact quasi-isomorphic to the whole $L_\infty$-algebra thus giving rise to our Triviality Theorem \ref{thm:main}.

\section{Basic notation}
We generally work over a ground field $\K$ of characteristic 0. All vector spaces and differential graded (dg) vector spaces are to be understood as $\K$-vector spaces.
We work in homological conventions, so that the differentials have degree $-1$. Generally, the differential on dg vector spaces will be denoted by $d$. For $V$ a differential graded vector space we define $V[k]$ such that an element $v\in V$ of degree $j$ has degree $j-k$ as an element of $V[k]$. 

We will often use the language of operads and colored operads.
A good introduction may be found in the textbook \cite{LV}. In particular, we will denote by $\Lie$ the operad governing Lie algebras. We denote by $\op P\{k\}$ the operad whose algebras are vector spaces $V[k]$, where $V$ is a $\op P$-algebra. 
The Koszul dual of a quadratic operad is denoted by $\op P^\vee$. The cobar construction of a coaugmented cooperad $\op C$ is denoted by $\Omega(\op C)$. The operad governing $L_\infty$-algebras is denoted by 
\[
\hoLie = \Omega(\Lie^\vee).
\]
We will be slightly inconsistent in the notation in that we use both the (equivalent) words "$L_\infty$-algebra" and "$\hoLie$-algebra" for the same object.
We also define the degree shifted version  
\[
\hoLie_2 := \hoLie\{1\}.
\]

\section{Pre-Lie algebras and their modules}
Let us recall the notion of pre-Lie algebras and modules. For a more detailed account and history we refer to \cite{CL} and references therein.
\begin{defi}\label{def:prelie}
A (dg) pre-Lie algebra is a (dg) vector space $\alg g$ together with a binary operation 
\[
\bullet : \alg g\otimes \alg g \to \alg g
\]
such that for all $x,y,z\in \fg$ we have
\[
x\bullet (y\bullet z) - (x\bullet y) \bullet z = x\bullet (z\bullet y) - (x\bullet z) \bullet y.
\]
A right module for the pre-Lie algebra $\alg g$ is a vector space $M$ together with an operation 
\[
\circ : M\otimes \fg \to M
\]
such that for all $m\in M$, $x,y\in \fg$ we have
\[
m\circ (x\bullet y) - (m\circ x) \bullet y = m\circ (y\bullet x) - (m\circ y) \bullet x.
\]
We equip the tensor product $M\otimes M'$ of two right $\fg$-modules $M,M'$ with a right $\fg$-module structure such that for homogeneous $m\in M$, $m'\in M'$, $x\in\fg$
\[
(m\otimes m')\circ x = (-1)^{|x||m'|}(m\circ x)\otimes m' + m\otimes (m'\circ x).
\]
Suppose now that $M$ is an addition equipped with an algebra structure over the operad $\op P$. Then we say that the operadic action and $\fg$-action are compatible if they equip $M$ with the structure of $\op P$ algebra in the symmetric monoidal category of right $\fg$-modules. Concretely, this means that for all homogeneous $p\in \op P(r)$, $m_1,\dots,m_r\in M$ and $x\in \fg$
\[
p(m_1,\dots ,m_r)\circ x = p(m_1,\dots, m_r \circ x) \pm p(m_1,\dots ,m_{r-1} \circ x,m_r) +\dots\pm p(m_1 \circ x,\dots, m_r).
\]
In the special case of $\op P$ being the operad governing $L_\infty$-algebras we call the pair $(\alg g,M)$ a \emph{pre-Lie pair}.
\end{defi}

A pre-Lie algebra $\fg$ is in particular a dg Lie algebra with the bracket
\[
[x,y] := x\bullet y - (-1)^{|x||y|}y\bullet x.
\]
A pre-Lie module is then automatically a dg Lie algebra module.

Note that the operation of taking the Lie bracket $[x,-]$ with some fixed $x$ in a pre-Lie algebra $\alg g$ is generally not a derivation with respect to the pre-Lie product.
Instead, one has the following simple variation.
\begin{lemma}\label{lem:preliederivation}
 Let $\alg g$ be a pre-Lie algebra and let $x,y_1,\dots, y_n\in\alg g$ be any homogeneous elements. Then 
 \begin{multline*}
  (\cdots(x\bullet y_1) \bullet y_1) \bullet \cdots \bullet y_n)
  -
  (-1)^{|x|(|y_1|+\dots+|y_n|)}
  (\cdots(y_1\bullet y_2) \bullet y_3) \bullet \cdots \bullet y_n) \bullet x
  \\=
  \sum_{j=1}^n
  (-1)^{|x|(|y_1|+\dots+|y_{j-1}|}
  (\cdots(y_1\bullet y_2) \bullet y_3) \bullet \cdots \bullet [x,y_j])\bullet \cdots  \bullet y_{n-1})\bullet y_n.
 \end{multline*}
\end{lemma}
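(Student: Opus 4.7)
The plan is to prove the lemma by induction on $n$, with the inductive step reducing to (a slightly reformulated version of) the pre-Lie axiom itself.

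For the base case $n=1$, the asserted equation reads $x\bullet y_1-(-1)^{|x||y_1|}y_1\bullet x=[x,y_1]$, which is just the definition of the bracket. It is also instructive to work out $n=2$ directly: after canceling common terms, what remains is
\[
 (x\bullet y_1)\bullet y_2 - (-1)^{|x||y_2|}(x\bullet y_2)\bullet y_1
 \;=\; x\bullet (y_1\bullet y_2) - (-1)^{|x||y_2|}x\bullet (y_2\bullet y_1),
\]
which is precisely the graded pre-Lie identity (written with $y_1,y_2$ swapped into the two inner slots). In other words, the $n=2$ case expresses the well-known fact that the associator of $\bullet$ is symmetric in its last two arguments, or equivalently that
\[
 (a\bullet x)\bullet b \;=\; a\bullet [x,b] + (-1)^{|x||b|}(a\bullet b)\bullet x + (-1)^{|x||b|}\bigl(\text{terms symmetric in }x,b\bigr).
\]
The precise identity I will use in the induction is the following consequence of the pre-Lie axiom:
\[
 (a\bullet x)\bullet b \;=\; a\bullet [x,b] + (-1)^{|x||b|}(a\bullet b)\bullet x,
\]
valid whenever one writes the pre-Lie identity twice and takes the antisymmetrization in $x,b$. (Strictly, this is the $n=2$ case applied with $a$ playing the role of the iterated product.)

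For the inductive step, abbreviate $z_k:=(\cdots((y_1\bullet y_2)\bullet y_3)\cdots)\bullet y_k$ and let $\epsilon_j:=(-1)^{|x|(|y_1|+\cdots+|y_{j-1}|)}$. Assuming the formula for $n-1$, right-multiply the whole identity by $y_n$; this immediately produces the sum $\sum_{j=1}^{n-1}\epsilon_j(\cdots\bullet[x,y_j]\bullet\cdots)\bullet y_n$ on the right, while on the left the first term becomes the desired $n$-th iterated product and the second term becomes $\epsilon_n(z_{n-1}\bullet x)\bullet y_n$ (up to the sign). Applying the $n=2$ identity recalled above with $a=z_{n-1}$, $x=x$, $b=y_n$ rewrites
\[
 (z_{n-1}\bullet x)\bullet y_n = z_{n-1}\bullet[x,y_n] + (-1)^{|x||y_n|}\,z_n\bullet x.
\]
Substituting back, the $(-1)^{|x||y_n|}z_n\bullet x$ piece produces exactly the missing $z_n\bullet x$ term with the correct total Koszul sign, while the $z_{n-1}\bullet[x,y_n]$ piece is exactly the $j=n$ summand on the right-hand side of the lemma. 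This completes the induction.

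The main obstacle is purely bookkeeping the Koszul signs: one must verify that the sign $\epsilon_n=(-1)^{|x|(|y_1|+\cdots+|y_{n-1}|)}$ obtained after multiplying the inductive hypothesis by $y_n$ combines correctly with $(-1)^{|x||y_n|}$ coming from the rewriting step to give the sign $(-1)^{|x|(|y_1|+\cdots+|y_n|)}$ on the term $z_n\bullet x$. No genuinely new algebraic input beyond the defining pre-Lie identity is needed.
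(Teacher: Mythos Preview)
Your argument is correct and is essentially the paper's proof: the key identity $(a\bullet x)\bullet b = a\bullet[x,b] + (-1)^{|x||b|}(a\bullet b)\bullet x$ that you isolate is exactly the relation $z\bullet[x,y]=(z\bullet x)\bullet y-(-1)^{|x||y|}(z\bullet y)\bullet x$ the paper uses, and your induction on $n$ is just the telescoping spelled out step by step. The digression about the $n=2$ case with ``terms symmetric in $x,b$'' is muddled and unnecessary---the identity follows directly from one application of the pre-Lie axiom plus the definition of the bracket---but the main line of argument is clean and matches the paper's.
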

\begin{proof}
 In general 
 \[
  z \bullet [x,y] = ((z\bullet x)\bullet y) - (-1)^{|x||y|} ((z\bullet y)\bullet x).
 \]
Inserting this on the right-hand side of the equation in the lemma one sees that the sum collapses telescopically to the left-hand side.
\end{proof}

\subsection{Operad of pre-Lie algebras and rooted trees}
We recall from \cite{CL} the definition of the rooted tree operad $\RT$.
The space $\RT(n)$ consists of linear combinations of rooted trees with vertices labelled by numbers $1,2,\dots,n$, e.g.,
\[
\begin{tikzpicture}[scale=1.3]
\node (r) at (0,0) {$1$};
\node (v1) at (-.5,-.5) {$3$};
\node (v2) at (.5,-.5) {$4$};
\node (v3) at (0,-1) {$2$};
\node (v4) at (1,-1) {$5$};
\draw (r) edge +(0,.4) edge (v2) edge (v1)
          (v2) edge (v3) edge (v4);
\end{tikzpicture}
.
\]
The composition is defined by inserting one tree into one vertex of another, and summing over all ways of reconnecting the edges. We describe this pictorially
\[
\begin{tikzpicture}[baseline=-.65ex]
\node (r) at (0,.3) {$1$};
\node (v1) at (-.5,-.7) {$2$};
\node (v2) at (.5,-.7) {$4$};
\node (v3) at (0,-.7) {$3$};
\draw (r) edge +(0,.5) edge (v1) edge (v2) edge (v3);
\end{tikzpicture}
\circ_1 
\begin{tikzpicture}[baseline=-.65ex]
\node (r) at (0,.3) {$a$};
\node (v1) at (-.5,-.7) {$b$};
\node (v2) at (.5,-.7) {$d$};
\node (v3) at (0,-.7) {$c$};
\draw (r) edge +(0,.5) edge (v1) edge (v2) edge (v3);
\end{tikzpicture}
=\sum \,
\begin{tikzpicture}[baseline=-.65ex]
\node (r) at (0,.3) {$a$};
\node (v1) at (-.5,-.7) {$b$};
\node (v2) at (.5,-.7) {$d$};
\node (v3) at (0,-.7) {$c$};
\node (w1) at (0,-1.4) {$4$};
\node (w2) at (-1,-1.4) {$2$};
\node (w3) at (1.5,-1.4) {$3$};
\draw (r) edge +(0,.5) edge (v1) edge (v2) edge (v3) edge (w3)
         (v1) edge (w2) edge (w1);
\end{tikzpicture}
,
\]
and otherwise refer to \cite{CL} for more details.
The main result of loc. cit. is the following.
\begin{thm}[\cite{CL}]
The operad of rooted trees $\RT$ is identical to the operad $\PL$ governing pre-Lie algebras. The isomorphism
\[
\PL\to \RT
\]
is realized by sending the generator $(-\bullet -)$ to the tree
\[
\begin{tikzpicture}[baseline=-.65ex]
\node (v) at (0,.3) {$1$};
\node (w) at (0,-.3) {$2$};
\draw (v) edge (w) edge +(0,.5);
\end{tikzpicture} .
\]
\end{thm}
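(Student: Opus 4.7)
The plan is to verify that the two-vertex tree $\tau$ in $\RT(2)$ satisfies the pre-Lie relation, which gives an operad morphism $\Phi\colon\PL\to\RT$ sending the generator to $\tau$, and then to show $\Phi$ is an isomorphism by combining a surjectivity argument with a dimension count against Cayley's formula.

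First, I would verify that the composition on $\RT$ described above (inserting one tree into a vertex of another and summing over all ways of reconnecting the incoming edges of that vertex) is associative, equivariant, and unital. This is a direct combinatorial check: when comparing $T\circ_i(T'\circ_j T'')$ with the iterated composition $(T\circ_i T')\circ_k T''$, the sums over reconnections on the two sides produce the same multiset of trees after reindexing. Once this is established, I would define the binary product $x\bullet y$ encoded by $\tau\in \RT(2)$ and expand $\tau\circ_1 \tau$ and $\tau\circ_2 \tau$ as sums of three-vertex rooted trees. The associator $x\bullet(y\bullet z)-(x\bullet y)\bullet z$ then visibly becomes a sum of three trees that is symmetric under the exchange $y\leftrightarrow z$, which is precisely the pre-Lie identity.

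By the universal property of $\PL$, sending the generator $(-\bullet-)$ to $\tau$ then defines an operad morphism $\Phi\colon\PL\to\RT$. Surjectivity I would establish by induction on the number of vertices. Every labelled rooted tree $T$ on $\{1,\dots,n\}$ arises from a tree $T'$ on $\{1,\dots,n-1\}$ by grafting a new leaf labelled $n$ onto some vertex $v$. The pre-Lie product $T'\bullet n$, when computed through $\Phi$, produces the desired tree $T$ together with correction terms in which the new leaf is attached to a descendant of $v$; all such correction terms have strictly smaller height at $v$ and therefore can be eliminated by a triangular subtraction using the inductive hypothesis.

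The main obstacle is injectivity of $\Phi$, which I would settle by a dimension comparison. By Cayley's formula, $\dim\RT(n)=n^{n-1}$. For the matching lower bound $\dim\PL(n)\geq n^{n-1}$ one needs a concrete faithful realization of the free pre-Lie algebra; the standard choice is the pre-Lie algebra of polynomial vector fields on an affine space, where the pre-Lie product is given by $X\bullet Y:=\sum_i X^i \partial_i Y$. Evaluating iterated pre-Lie monomials on $n$ distinct coordinate generators yields, after extracting the multilinear part, a family of $n^{n-1}$ linearly independent operations, indexed precisely by labelled rooted trees. This forces $\Phi$ to be an isomorphism, and the explicit image of the generator is $\tau$ by construction, completing the identification.
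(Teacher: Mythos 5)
The paper itself offers no proof of this statement --- it is quoted from Chapoton--Livernet [CL] --- so your proposal stands or falls on its own. The overall shape (verify the pre-Lie identity for the two-vertex tree, prove surjectivity by induction on the number of vertices, settle injectivity by a count against $n^{n-1}$) is the right one, but the injectivity step as you have arranged it does not close logically. You propose to establish the \emph{lower} bound $\dim\PL(n)\ge n^{n-1}$ via a faithful realization in vector fields and to combine this with surjectivity of $\Phi\colon\PL(n)\to\RT(n)$. But surjectivity already yields $\dim\PL(n)\ge\dim\RT(n)=n^{n-1}$ for free; to deduce that a surjection is injective you need the \emph{upper} bound $\dim\PL(n)\le n^{n-1}$. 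That is the hard half: one must show that $\PL(n)$ is \emph{spanned} by $n^{n-1}$ tree-shaped monomials, i.e., run the rewriting argument abstractly inside $\PL$ using only the pre-Lie relation (essentially your surjectivity induction, but carried out in the source rather than in the target). Only then does the lower bound --- via vector fields or, more directly, via the grafting pre-Lie algebra on labelled rooted trees --- combine with the spanning set to force equality. As written, both of your dimension estimates point the same way and neither constrains $\ker\Phi$. (A small related slip: for the paper's right pre-Lie convention the vector-field product should be $X\bullet Y=\sum_i Y^i\partial_i X$, the derivative of $X$ along $Y$, not the other way around.)

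The surjectivity induction also needs repair. The product $T'\bullet x_n$ in $\RT$ attaches the new leaf $n$ to \emph{every} vertex of $T'$ with coefficient one, not only to $v$ and its descendants, so there is no triangularity ``in the height at $v$'' within that single product, and one cannot isolate $T'_v$ from it alone. The standard fix is a secondary induction on the number of branches at the root: a tree whose root has a single branch $u$ equals $s\bullet u$ on the nose with $u$ smaller, while a tree whose root has $k\ge2$ branches is an iterated product of strictly smaller trees minus correction trees having strictly fewer root branches. Finally, a cosmetic point: the associator of the two-vertex tree is a single corolla (with a sign), not ``a sum of three trees''; your conclusion that it is symmetric in the last two inputs is nonetheless correct, so the existence of the morphism $\Phi$ is fine.
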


Similarly, let $\PLM$ be the two-colored operad governing a pair consisting of a pre-Lie algebra and a right pre-Lie module. Let similarly $\RTM$ be the two colored operad such that the color 1 piece $\RTM^1(r,0)$ is $\RT$, and such that 
$\RTM^2(r,1)$ is the space of linear combinations of rooted trees with top vertex marked $*$ and the other vertices numbered $1,\dots,r$, e.g., 
\[
\begin{tikzpicture}[scale=1.3]
\node (r) at (0,0) {$*$};
\node (v1) at (-.5,-.5) {$1$};
\node (v2) at (.5,-.5) {$2$};
\node (v3) at (0,-1) {$3$};
\node (v4) at (1,-1) {$4$};
\draw (r) edge +(0,.4) edge (v2) edge (v1)
          (v2) edge (v3) edge (v4);
\end{tikzpicture}
.
\]
By a mild extension of the result of \cite{CL} we obtain the following.
\begin{thm}[extension of \cite{CL}]
The two-colored operads $\RTM$ and $\PLM$ are identical. The isomorphism
\[
\PLM\to \RTM
\]
is realized by sending the generators $(-\bullet -)$ and $(-\circ -)$ to the following trees
\begin{align*}
(-\bullet -) & \mapsto
\begin{tikzpicture}[baseline=-.65ex]
\node (v) at (0,.3) {$1$};
\node (w) at (0,-.3) {$2$};
\draw (v) edge (w) edge +(0,.5);
\end{tikzpicture} 
& \text{and} & & 
(-circ -) & \mapsto
\begin{tikzpicture}[baseline=-.65ex]
\node (v) at (0,.3) {$*$};
\node (w) at (0,-.3) {$1$};
\draw (v) edge (w) edge +(0,.5);
\end{tikzpicture} .
\end{align*}
\end{thm}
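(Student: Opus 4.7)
The plan is to imitate the proof of $\PL \cong \RT$ from \cite{CL}, extending the combinatorial argument to the module color. Since the pre-Lie component of the map is already the Chapoton-Livernet isomorphism, the essential new work is to establish that the module component is also an isomorphism. I would proceed in three steps: first endow $\RTM$ with the stated two-colored operad structure, then verify the pre-Lie module relation holds combinatorially for the chosen generators, and finally prove that the resulting morphism is a bijection.

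First, I would define the module component of the operad structure on $\RTM$ via grafting. For a $*$-rooted tree $T$ and a numbered rooted tree $T'$, the product $T \circ T'$ is the sum over vertices $v$ of $T$ (including the $*$-root) of the $*$-rooted tree obtained by adding $T'$ as a new subtree at $v$, with the root of $T'$ becoming a new child of $v$. More general partial compositions at numbered vertices are defined by insertion and summation over reconnections of incoming edges, exactly as in \cite{CL}. Next I would verify that the corolla trees assigned to $(-\bullet-)$ and $(-\circ -)$ satisfy the pre-Lie module axiom of Definition~\ref{def:prelie}. A direct expansion shows that
\[
(m \circ x) \bullet y \;-\; m \circ (x \bullet y) \;=\; \sum_{v,w} T_{v,w}(m;x,y),
\]
where $v,w$ range over vertices of $m$ and $T_{v,w}(m;x,y)$ denotes the $*$-rooted tree obtained by grafting $x$ at $v$ and $y$ at $w$: the terms in $(m\circ x)\bullet y$ where $y$ is attached to a vertex of $x$ exactly match $m\circ(x\bullet y)$, and the remaining terms are those where both $x$ and $y$ are grafted onto $m$. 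Exchanging $x$ and $y$ and relabeling $v \leftrightarrow w$ identifies the corresponding expression for $(m\circ y)\bullet x - m \circ (y\bullet x)$, establishing the relation. This is the module analog of \cite{CL}'s verification of the pre-Lie identity and so the combinatorics runs in parallel.

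Surjectivity of $\PLM \to \RTM$ on the module color follows by induction on the total number of vertices: any $*$-rooted tree with at least one numbered vertex can be written, modulo trees of strictly fewer vertices, as $m \circ T$ where $T$ is the subtree rooted at some chosen child of $*$ and $m$ is the complementary $*$-rooted tree, and iterated application together with the CL surjectivity on $\PL \to \RT$ reaches every basis element. Injectivity can be established by identifying $\RTM^2$ as the free right pre-Lie module on one generator over the free pre-Lie algebra $\RT$, which furnishes a tree basis for $\PLM^2(r,1)$ that is respected by the morphism; equivalently one can match Hilbert series against those computed from the quadratic presentation of $\PLM$. The main obstacle is the careful formulation and verification of the freeness statement for pre-Lie modules — the Chapoton-Livernet combinatorics for pre-Lie algebras extends essentially verbatim, but one must check that the distinguished $*$-root plays the role of a free generator consistently under iterated grafting, and that no additional relations are introduced by the interplay of $\bullet$ and $\circ$.
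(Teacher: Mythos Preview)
The paper gives no proof of this theorem: it is stated immediately after the sentence ``By a mild extension of the result of \cite{CL} we obtain the following,'' and the text then moves on. So there is nothing to compare your proposal against; you are supplying the argument the author omits.

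Your approach is the natural one and is correct in outline. The verification that the images of the generators satisfy the module relation is exactly the standard grafting symmetry, and surjectivity by induction on the number of vertices is routine. For injectivity, the cleanest route is probably a dimension count: $\dim \RTM^2(r,1)$ is the number of labelled rooted trees on $r+1$ vertices with a prescribed root, namely $(r+1)^{r-1}$, and one can verify that $\PLM^2(r,1)$ has the same dimension by writing down a spanning set of left-normed words $(\cdots((*\circ x_{i_1})\circ x_{i_2})\cdots)\circ x_{i_r}$ symmetrised appropriately, or by noting that a free right pre-Lie module on one generator over the free pre-Lie algebra on $r$ generators is nothing but the free pre-Lie algebra on $r+1$ generators with the extra generator playing the role of the module generator (the module relation is just the pre-Lie relation with the first argument frozen). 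This last observation makes the ``freeness'' step you flag as the main obstacle essentially tautological and lets you import the Chapoton--Livernet dimension count directly.

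One small wrinkle: you write $(m\circ x)\bullet y$ in your displayed relation, following the paper's Definition~\ref{def:prelie}, but since $m\circ x$ lies in the module this should read $(m\circ x)\circ y$; the paper itself has this typo, so be aware of it when writing things out carefully.
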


We introduce the following notation for certain special operations in $\RTM$. First, the operation 
\[
\begin{tikzpicture}[baseline=-.65ex]
\node (r) at (0,.3) {$1$};
\node (v1) at (-.7,-.7) {$2$};
\node (v2) at (-.4,-.7) {$3$};
\node at (.1,-.5) {$\scriptstyle \cdots$};
\node (v3) at (0.7,-.7) {$r$};
\draw (r) edge +(0,.5) edge (v1) edge (v2) edge (v3);
\end{tikzpicture}
 \in \RT(r)
\]
will be denoted on elements $x_1,\dots, x_r\in \fg$ by 
\beq{equ:multibullet}
x_1 \bullet (x_2,\dots,x_r).
\eeq
Second, the operation 
\[
\begin{tikzpicture}[baseline=-.65ex]
\node (r) at (0,.3) {$*$};
\node (v1) at (-.7,-.7) {$1$};
\node (v2) at (-.4,-.7) {$2$};
\node at (.1,-.5) {$\scriptstyle \cdots$};
\node (v3) at (0.7,-.7) {$r$};
\draw (r) edge +(0,.5) edge (v1) edge (v2) edge (v3);
\end{tikzpicture} \in \RTM^2(r,1)
\]
will be denoted on elements $m\in M$, $x_1,\dots, x_r\in \fg$ by
\beq{equ:circnotation}
m\circ (x_1,\dots,x_r)
\eeq

Finally let us note that the fact that a pre-Lie algebra is a Lie algebra can be expressed by the existence of an operad map $\Lie\to \RT$, defined on the generator as follows:
\beq{equ:mapfromlie}
[-, -] \mapsto 
\begin{tikzpicture}[baseline=-.65ex]
\node (v) at (0,.3) {$1$};
\node (w) at (0,-.3) {$2$};
\draw (v) edge (w) edge +(0,.5);
\end{tikzpicture} 
-
\begin{tikzpicture}[baseline=-.65ex]
\node (v) at (0,.3) {$2$};
\node (w) at (0,-.3) {$1$};
\draw (v) edge (w) edge +(0,.5);
\end{tikzpicture} 
\eeq

Finally, for later use, let us show the following elementary formulas.
\begin{lemma}\label{lem:distributive}
 Let $\fg$ be a pre-Lie algebra acting on $M$. Let $x\in \fg$ be of degree 0 and let $\lambda$ be a formal parameter. Define the operation 
 \begin{gather*}
  E_{\lambda x} \colon M[[\lambda]] \to M[[\lambda]] \\
  m \mapsto \sum_{j\geq 1} \frac{\lambda^j}{j!} 
  \underbrace{(\cdots(m\circ x)\circ x)\circ \cdots \circ x)\circ x}_{j\text{ many ``$x$''}}.
 \end{gather*}
Furthermore define 
\[
 e_{\lambda x} =  \sum_{j\geq 1} \frac{\lambda^j}{j!} 
  \underbrace{(\cdots(x\bullet x)\bullet x)\bullet \cdots \bullet x)\bullet x}_{j\text{ many ``$x$''}} \in\fg[[\lambda]].
\]
Then the following holds for all $m\in M, x_1,\dots,x_r\in\fg$:
\begin{align}\label{equ:temp8}
 E_{\lambda x} m &= \sum_{j\geq 1} \frac 1 {j!} m\circ \underbrace{(e_{\lambda x},\dots,e_{\lambda x} )}_{j\times},
\\
\label{equ:temp9}
 E_{\lambda x} (m\circ(x_1,\dots,x_r))+m\circ(x_1,\dots,x_r) &= \sum_{j\geq 0} \frac 1 {j!} m\circ (E_{\lambda x}x_1+x_1,\dots,E_{\lambda x}x_r+x_r,\underbrace{e_{\lambda x},\dots,e_{\lambda x} )}_{j\times},
\end{align}
where we used the notation \eqref{equ:circnotation}, and considered $\fg$ as a pre-Lie module over $\fg$ for defining $E_{\lambda x}x_j$.
\end{lemma}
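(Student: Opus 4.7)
\emph{Approach.} Both identities express formal power series in $\lambda$ built only from the pre-Lie operation on $\fg$ and the pre-Lie module action on $M$, so by universality it suffices to verify them in the free pre-Lie pair on the symbols appearing. By the Chapoton--Livernet identification $\PLM = \RTM$, this free pair is the rooted-tree model, and both sides become explicit $\lambda$-weighted sums of rooted trees. The central technical input is the following grafting identity, valid in any pre-Lie pair for $y_1,\dots,y_k,x\in\fg$ and $m\in M$ with $x$ of degree $0$:
\[
(m\circ(y_1,\dots,y_k))\circ x \;=\; m\circ(y_1,\dots,y_k,x)\;+\;\sum_{i=1}^{k} m\circ(y_1,\dots,y_i\bullet x,\dots,y_k).
\]
In the rooted-tree picture this simply records that attaching a new vertex $x$ to the tree $m\circ(y_1,\dots,y_k)$ is either placed as a new direct child of the root $*$ (first term) or grafted somewhere inside one of the subtrees $y_i$ (the sum, using that $y_i\bullet x$ enumerates all graftings of $x$ into $y_i$).

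\emph{Proof of \eqref{equ:temp8}.} We compare both sides as solutions of an ODE in $\lambda$. Direct differentiation of the definition gives
\[
\frac{d}{d\lambda}(E_{\lambda x} m) \;=\; (m + E_{\lambda x} m)\circ x,\qquad E_{\lambda x} m\big|_{\lambda=0}=0.
\]
Let $R(\lambda)$ denote the right-hand side of \eqref{equ:temp8}. Using the identity $\frac{d}{d\lambda}e_{\lambda x}=x+e_{\lambda x}\bullet x$, symmetry of the corolla in its algebra inputs (all terms are even-degree so signs are trivial), and the grafting identity applied to $R\circ x$, a direct calculation shows $\frac{dR}{d\lambda}=(m+R)\circ x$ with $R(0)=0$. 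Uniqueness of the solution of this ODE yields $E_{\lambda x} m = R$.

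\emph{Proof of \eqref{equ:temp9}.} Apply \eqref{equ:temp8} with $m$ replaced by $M:=m\circ(x_1,\dots,x_r)$ to obtain
\[
E_{\lambda x}(M)+M\;=\;\sum_{j\geq 0}\frac{1}{j!}\, M\circ(\underbrace{e_{\lambda x},\dots,e_{\lambda x}}_{j\times}).
\]
Now expand each multi-grafting $M\circ(e_{\lambda x},\dots,e_{\lambda x})_{j\times}$ in $\RTM$ as a sum over functions $\phi\colon\{1,\dots,j\}\to\{*,x_1,\dots,x_r\}$, where $\phi(\ell)$ records the vertex of the tree $M$ at which the $\ell$-th copy of $e_{\lambda x}$ is grafted; this is the $j$-fold iteration of the grafting identity above. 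Grouping the $\phi$'s by the sizes $a_0:=|\phi^{-1}(*)|$ and $a_i:=|\phi^{-1}(x_i)|$, the standard multinomial count together with multilinearity of the corolla rewrites the double sum as
\[
\sum_{a_0\geq 0}\frac{1}{a_0!}\, m\circ\Bigl(\tilde x_1,\dots,\tilde x_r,\underbrace{e_{\lambda x},\dots,e_{\lambda x}}_{a_0\times}\Bigr),\qquad \tilde x_i\;:=\;\sum_{a\geq 0}\frac{1}{a!}\, x_i\circ(\underbrace{e_{\lambda x},\dots,e_{\lambda x}}_{a\times}).
\]
A further application of \eqref{equ:temp8} to each $x_i$, viewing $\fg$ as a right pre-Lie module over itself with $\circ=\bullet$, identifies $\tilde x_i=x_i+E_{\lambda x} x_i$, producing the right-hand side of \eqref{equ:temp9}.

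\emph{Main obstacle.} The only nonroutine step is the grafting identity together with its multi-input bookkeeping used for \eqref{equ:temp9}. This can be established either by a direct combinatorial computation in the rooted-tree operad $\RTM$ or, equivalently, by iterating the defining pre-Lie module axiom from Definition \ref{def:prelie}. Once this combinatorial input is in place, both halves of the lemma reduce to elementary manipulations of generating functions.
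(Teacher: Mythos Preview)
Your proof is correct. For \eqref{equ:temp8} you use the same ODE argument as the paper: both sides vanish at $\lambda=0$ and satisfy $\frac{d}{d\lambda}(-)=(m+(-))\circ x$, where the computation for the right-hand side relies on precisely the grafting identity you isolate (the paper uses this identity implicitly in its manipulation of $A\circ x$).

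For \eqref{equ:temp9} your argument is genuinely different. The paper runs the ODE method a second time: it sets $B$ and $C$ to be the two sides, checks $B(0)=C(0)$, and verifies $\frac{d}{d\lambda}B=B\circ x=\frac{d}{d\lambda}C$ by another direct computation. You instead deduce \eqref{equ:temp9} from \eqref{equ:temp8} by substituting $M=m\circ(x_1,\dots,x_r)$ and then expanding each $M\circ(e_{\lambda x},\dots,e_{\lambda x})$ via the iterated grafting identity in $\RTM$, regrouping by a multinomial count, and finally invoking \eqref{equ:temp8} once more (in $\fg$ as a module over itself) to recognize $\tilde x_i=x_i+E_{\lambda x}x_i$. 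This is a clean reduction that makes the dependence of \eqref{equ:temp9} on \eqref{equ:temp8} transparent and avoids repeating the ODE calculation; the paper's approach is more uniform but hides this logical dependence. Both rely on the same combinatorial input, namely your grafting identity, which is exactly the $\RTM$-composition formula for the corolla and holds in any pre-Lie module by the identification $\PLM=\RTM$.
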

\begin{proof}
 Clearly both sides of each of the equations agree in $\lambda=0$. Furthermore, we show that both sides satisfy the same (formal) ordinary differential equation in $\lambda$. More concretely, since $E_{\lambda x}$ is the exponential of the right action with $\lambda x$, minus the identity:
 \[
  \frac{d}{d\lambda} (E_{\lambda x} m) = (E_{\lambda x} m) \circ x + m\circ x.
 \]
 On the other hand, 
 \[
\frac{d}{d\lambda}e_{\lambda x} = e_{\lambda x}  \bullet x + x.
 \]
Hence, abbreviating the right-hand side of \eqref{equ:temp8} by $A$ we find that
\begin{align*}
\frac{d}{d\lambda} A
 &=
\sum_{j\geq 1} \frac j {j!} m\circ (e_{\lambda x}\bullet x +x,\dots,e_{\lambda x} )
 \\
 &=
\sum_{j\geq 1} \frac j {j!} m\circ (e_{\lambda x}\bullet x,\dots,e_{\lambda x} )
+
\sum_{j\geq 1} \frac 1 {(j-1)!} m\circ (x,\underbrace{e_{\lambda x},\dots,e_{\lambda x}}_{j-t\times} )
\\
 &=
\sum_{j\geq 1} \frac 1 {j!}\left(
(m\circ (e_{\lambda x},\dots,e_{\lambda x} ))\circ x 
- m\circ (x,\underbrace{e_{\lambda x}\dots,e_{\lambda x}}_{j\times} )\right)
+
\sum_{j\geq 1} \frac 1 {(j-1)!} m\circ (x,\dots,e_{\lambda x} )
\\
 &=
 A\circ x
 +
 m\circ x.
\end{align*}
Hence the left-hand side and the right-hand side of \eqref{equ:temp8} satisfy the same ODE in $\lambda$, and by uniqueness of solutions we find that both sides agree.\footnote{To this end, the careful reader may want to replace $\lambda$ by $t\lambda$ for $t$ a non-formal variable and then apply the standard (non-formal) uniqueness result to the ODE in $t$.}

Turning to \eqref{equ:temp9} denote the left-hand side by $B$ and the right-hand side by $C$.
As before we easily find that 
\[
 \frac{d}{d\lambda} B = B\circ x.
\]
Now again by a similar calculation as before:
\begin{align*}
\frac{d}{d\lambda} C
 &=
 \sum_{j\geq 0}\sum_{i=1}^r \frac 1 {j!} m\circ (E_{\lambda x}x_1+x_1,\dots,(E_{\lambda x}x_i) \bullet x+x_i\bullet x, \dots,E_{\lambda x}x_n +x_n,e_{\lambda x},\dots,e_{\lambda x} )
 \\&\quad\quad\quad\quad +
\sum_{j\geq 0} \frac j {j!} m\circ (E_{\lambda x}x_1,\dots ,E_{\lambda x}x_n , e_{\lambda x}\bullet x +x,e_{\lambda x},\dots,e_{\lambda x} )
 \\
 &=
 C\circ x
 -
 \sum_{j\geq 0}\frac 1 {j!}
  m\circ (E_{\lambda x}x_1+x_1,\dots,(E_{\lambda x}x_i) \bullet x+x_i\bullet x, \dots,E_{\lambda x}x_n +x_n,x, \underbrace{e_{\lambda x},\dots,e_{\lambda x}}_{j\times} )
 \\&\quad\quad\quad\quad +
 \sum_{j\geq 1} \frac j {j!} m\circ (E_{\lambda x}x_1,\dots ,E_{\lambda x}x_n , e_{\lambda x}\bullet x +x, \underbrace{e_{\lambda x},\dots,e_{\lambda x}}_{j-1\times} )
 \\&= C\circ x.
\end{align*}
Hence the result follows.
\end{proof}

\subsection{Twisting \texorpdfstring{$\RT$}{RT}}
A Maurer-Cartan element in a pre-Lie algebra $\fg$ is a degree -1 element $\alpha\in \fg$ such that
\[
d\alpha + \alpha\bullet \alpha = 0.
\]
Of course, this is the same as a Maurer-Cartan element in $\fg$, considered as a dg Lie algebra.

We can apply the formalism of operadic twisting \cite{DolWill} to the operad $\RT$ (or rather, to map $\Lie \to \RT$) to produce another operad $\Tw\RT$ with a map $\Lie\to \Tw\RT$.
Concretely, $\Tw\RT$ can be seen as the operad generated by $\RT$ and an additional zero-ary generator of degree -1, completed in the number of such generators occurring.
The zero-ary generator is a formal avatar of the Maurer-Cartan element, and the differential on $\Tw\RT$ is defined to reflect the Maurer-Cartan equation.
Concretely, we can depict elements of $\Tw\RT$ as series of trees, with some vertices colored black (i.e., filled by the zero-ary generator).
\[
\begin{tikzpicture}[scale=1.3]
\node (r) at (0,0) {$1$};
\node (v1) at (-.5,-.5) {$2$};
\node[int] (v2) at (.5,-.5) {};
\node (v3) at (0,-1) {$3$};
\node (v4) at (1,-1) {$4$};
\draw (r) edge +(0,.4) edge (v2) edge (v1)
          (v2) edge (v3) edge (v4);
\end{tikzpicture}
\]
The differential is then defined pictorially as follows.
\begin{align}
d
\begin{tikzpicture}[baseline=-.65ex]
\node (r) at (0,.3) {$j$};
\node (v1) at (-.7,-.7) {};
\node (v2) at (-.4,-.7) {};
\node at (.1,-.5) {$\scriptstyle \cdots$};
\node (v3) at (0.7,-.7) {};
\draw (r) edge +(0,.5) edge (v1) edge (v2) edge (v3);
\end{tikzpicture}
&=
\sum\,
\begin{tikzpicture}[baseline=-.65ex, yshift=-.3cm]
\node (rr) at (0,.8) {$j$};
\node[int] (r) at (0,.3) {};
\node (v1) at (-.7,-.5) {};
\node (v2) at (-.4,-.5) {};
\node at (.1,-.3) {$\scriptstyle \cdots$};
\node at (1.1,-0.3) {$\scriptstyle \cdots$};
\node (v3) at (0.7,-.5) {};
\draw (r) edge (v1) edge (v2) edge (v3)
(rr) edge (r)  edge +(0,.5) edge (1,-0.5) edge (1.5,-.5);
\end{tikzpicture}
-
\sum\,
\begin{tikzpicture}[baseline=-.65ex, yshift=-.3cm]
\node[int]  (rr) at (0,.8) {};
\node (r) at (0,.3) {$j$};
\node (v1) at (-.7,-.5) {};
\node (v2) at (-.4,-.5) {};
\node at (.1,-.3) {$\scriptstyle \cdots$};
\node at (1.1,-0.3) {$\scriptstyle \cdots$};
\node (v3) at (0.7,-.5) {};
\draw (r) edge (v1) edge (v2) edge (v3)
(rr) edge (r)  edge +(0,.5) edge (1,-0.5) edge (1.5,-.5);
\end{tikzpicture}
\\
\label{equ:gradiffint}
d
\begin{tikzpicture}[baseline=-.65ex]
\node[int] (r) at (0,.3) {};
\node (v1) at (-.7,-.7) {};
\node (v2) at (-.4,-.7) {};
\node at (.1,-.5) {$\scriptstyle \cdots$};
\node (v3) at (0.7,-.7) {};
\draw (r) edge +(0,.5) edge (v1) edge (v2) edge (v3);
\end{tikzpicture}
&=
\sum\,
\begin{tikzpicture}[baseline=-.65ex, yshift=-.3cm]
\node[int] (rr) at (0,.8) {};
\node[int] (r) at (0,.3) {};
\node (v1) at (-.7,-.5) {};
\node (v2) at (-.4,-.5) {};
\node at (.1,-.3) {$\scriptstyle \cdots$};
\node at (1.1,-0.3) {$\scriptstyle \cdots$};
\node (v3) at (0.7,-.5) {};
\draw (r) edge (v1) edge (v2) edge (v3)
(rr) edge (r)  edge +(0,.5) edge (1,-0.5) edge (1.5,-.5);
\end{tikzpicture}
.
\end{align}

For aesthetic reasons let us also define the suboperad
\[
\TRT \subset \Tw\RT 
\]
consisting of series of trees all of whose black vertices have at least two children.
It is finite dimensional in each arity.
The map 
\[
\Lie\to \TRT
\]
is defined by sending the generator to the combination of graphs \eqref{equ:mapfromlie}.
Similarly, we have the map 
\[
\TRT\to \RT 
\]
sending graphs with black vertices to zero.

\begin{thm}
The map $\Lie\to\TRT$ is a quasi-isomorphism of operads.
\end{thm}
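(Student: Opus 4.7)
The plan is to compute $H^\bullet(\TRT(n))$ for each arity $n$ and show it agrees with $\Lie(n)$, concentrated in degree $0$. A preliminary observation is that $\TRT(n)$ is finite-dimensional and bounded: a tree with $n$ white and $b$ black vertices has $n+b-1$ edges, hence $n+b-1$ total parent--child pairs, so the constraint that each black vertex has at least $2$ children forces $2b \leq n+b-1$, i.e.\ $b \leq n-1$. Since the degree of a tree is $-b$ (each black vertex being a zero-ary generator of degree $-1$), $\TRT(n)$ is concentrated in degrees $[-(n-1), 0]$, so any bounded spectral sequence I set up will converge.

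The strategy is to run a spectral sequence via a filtration that isolates a simpler differential. A natural choice is the filtration by the number of black--black edges, so that the associated graded $E_0$ retains only the piece of $d$ that attaches a new black vertex adjacent to an existing black vertex. The $E_0$-complex then decomposes as a tensor product over the maximal connected black subtrees hanging off the ``white skeleton''. Each local black-subtree complex, by an explicit contracting homotopy that grafts a chosen pair of adjacent black vertices, is acyclic except for a canonical generator corresponding to the empty black subtree. Consequently $E_1$ is supported on trees whose black vertices form an independent set, each of valence $\geq 2$ and with only white neighbors.

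A second filtration (or a direct analysis of $d_1$) would then reduce the surviving complex to the image of $\Lie(n) \hookrightarrow \RT(n) \hookrightarrow \TRT(n)$. Concretely, one exhibits an acyclic pairing between trees containing at least one isolated black vertex and the trees obtained by collapsing that black vertex and antisymmetrizing its children; what survives is exactly the antisymmetric black-free trees, and these are, by the definition \eqref{equ:mapfromlie} of the map $\Lie \to \RT$, the image of $\Lie(n)$. A dimension count in small arities (matching $(n-1)!$) and a check that the induced map is the identity on the $\Lie$-generators then concludes.

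The main obstacle lies in constructing the contracting homotopies at each stage and carefully tracking the signs coming from the operadic twisting conventions and from the permutations of white-vertex labels. A potentially cleaner alternative would be to exploit the operadic twisting machinery: use a Chapoton--Livernet-type PBW decomposition exhibiting $\RT$ as a free right $\Lie$-module (so that twisting commutes with the decomposition), thereby reducing $\Lie \to \TRT$ to the analogous quasi-isomorphism $\Lie \to \Tw\Lie$, which can be proved by a spectral sequence of the same flavor but with far fewer combinatorial configurations to analyze. In either approach, the pre-Lie identities used in Lemmas \ref{lem:preliederivation} and \ref{lem:distributive} reappear as the bookkeeping tool for trading black subtrees against the action of their ``Maurer--Cartan content'' on neighbouring white vertices.
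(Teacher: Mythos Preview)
Your approach differs substantially from the paper's, and the primary filtration you propose has a gap. Filtering by the number of black--black edges does not isolate the black-splitting piece of $d$ on the associated graded: when the differential acts on a \emph{white} vertex $j$ and creates a new black vertex $b$, any black children of $j$ that get reassigned to $b$ (and likewise the parent of $j$, if black) produce new black--black edges as well. Hence the white-vertex part of $d$ contributes to the same filtration jump, and the claimed tensor-product decomposition of $E_0$ over maximal black subtrees is not available. The second stage is also too vague to assess: ``collapsing an isolated black vertex and antisymmetrizing its children'' is not a chain map without specifying how the parent edge and the operadic signs are handled, and you have not explained why the putative pairing is acyclic on the nose rather than merely setting up a further spectral sequence whose convergence would itself need argument.

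The paper avoids black-subtree combinatorics entirely via a short induction on arity, using a trick of Lambrechts--Voli\'c. One splits $\TRT(r) = V_1 \oplus V_{\geq 2}$ according to whether the white vertex labelled $1$ has valence $1$ or $\geq 2$, and sets up the spectral sequence whose first differential is the single piece $\delta' \colon V_{\geq 2} \to V_1$ that replaces vertex $1$ by a black vertex and reattaches $1$ as a leaf to it. This $\delta'$ is visibly injective; its cokernel is spanned by trees in which vertex $1$ is a leaf attached to another \emph{white} vertex $j\in\{2,\dots,r\}$, and deleting vertex $1$ identifies each of the $r-1$ resulting summands with $\TRT(r-1)$. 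The spectral sequence degenerates for degree reasons, and induction gives $H(\TRT(r)) \cong \bigoplus_{j=2}^{r}\Lie(r-1) \cong \Lie(r)$. Your alternative route via a PBW decomposition of $\RT$ as a free $\Lie$-module and reduction to $\Tw\Lie$ is a reasonable idea, but you have only gestured at it.
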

\begin{proof}
The proof is a small adaptation of a trick by Lambrechts and Voli'c \cite{LV}.
We proceed by induction on the arity. Note that $\TRT(0)=0$ and $\TRT(1)\cong \K$, so in arity $\leq 1$ the statement is true. Now suppose it is true in arity $r-1$. We split (for $r\geq 2$)
\[
\TRT(r) = V_1 \oplus V_{\geq 2}
\]
where $V_1$ consists of series of trees with vertex 1 having valence 1, while $V_{\geq 2}$ consists of series of trees where vertex 1 has valence at least two.
The differential has pieces mapping $V_1$ to $V_1$, $V_{\geq 2}$ to $V_{\geq 2}$, and $V_{\geq 2}$ to $V_1$ by splitting off all incident edges.
We take a spectral sequence such that the first differential is only the last part $\delta': V_{\geq 2}\to V_1$, i.e., pictorially the map 
\[
\delta' : 
\begin{tikzpicture}[baseline=-.65ex]
\node (r) at (0,.3) {$j$};
\node (v1) at (-.7,-.7) {};
\node (v2) at (-.4,-.7) {};
\node at (.1,-.5) {$\scriptstyle \cdots$};
\node (v3) at (0.7,-.7) {};
\draw (r) edge +(0,.5) edge (v1) edge (v2) edge (v3);
\end{tikzpicture}
\mapsto 
\begin{tikzpicture}[baseline=-.65ex]
\node[int] (r) at (0,.3) {};
\node (j) at (1,-.3) {$j$};
\node (v1) at (-.7,-.7) {};
\node (v2) at (-.4,-.7) {};
\node at (.1,-.5) {$\scriptstyle \cdots$};
\node (v3) at (0.7,-.7) {};
\draw (r) edge +(0,.5) edge (v1) edge (v2) edge (v3) edge (j);
\end{tikzpicture}
\]
It is clear that the map is injective. The cokernel $V_1'=V_1/\mathit{im}(\delta')$ is spanned by graphs for which vertex 1 has valence 1 and is connected to another numbered vertex. It splits into $r-1$ components according to which numbered vertex the vertex 1 connects to. The spectral sequence abuts on the next page by degree reasons, and each of the $r-1$ components is itself isomorphic as a complex to $\TRT(r-1)$, with the isomorphism being given by removing vertex one and its adjacent edge. By the induction hypothesis we hence have
\[
H(\TRT(r))\cong \oplus_{j=1}^{r-1} \Lie(r-1).
\]
As a vector space this is the same as $\Lie(r)$, and we leave it to the reader to verify that the map $\Lie\to\TRT$ indeed induces an isomorphism in homology. 
\end{proof}

\begin{rem}\label{rem:exact}
We note in particular that the homology of $\TRT$ is concentrated in degree 0. Hence, any cocycle represented by graphs with $k$ black vertices for $k>0$ is necessarily exact.
\end{rem}

\subsection{A(nother) \texorpdfstring{$\hoLie_2$}{hoLie2}-structure }\label{sec:anotherLinfty}
Suppose that $\fg$ is a pre-Lie algebra and that $\alpha\in \fg$ is a Maurer-Cartan element.
Then we define a $\hoLie_2$-structure on $\fg$ such that the $r$-ary operation $\nu_r$ is, on arguments $x_1,\dots,x_r\in \fg$
\[
\nu_r(x_1,\dots,x_r)=
\begin{cases}
d x_1 + [\alpha,  x_1 ] & \text{for $r=1$} \\
\alpha \bullet (x_1,\dots,x_r) & \text{otherwise}
\end{cases}.
\]
Here we use the notation \eqref{equ:multibullet}.
Put differently, the above $\hoLie_2$ action on $\fg^\alpha$ is obtained via the $\TRT$-action on $\fg^\alpha$ through the map 
\[
\hoLie_2\to \TRT
\]
sending the generator $\nu_r$ to the graph
\[
\begin{tikzpicture}[baseline=-.65ex]
\node[int] (r) at (0,.3) {};
\node (v1) at (-.7,-.7) {$1$};
\node (v2) at (-.4,-.7) {$2$};
\node at (.1,-.5) {$\scriptstyle \cdots$};
\node (v3) at (0.7,-.7) {$r$};
\draw (r) edge +(0,.5) edge (v1) edge (v2) edge (v3);
\end{tikzpicture}
\]
Looking at the definition of the differential \eqref{equ:gradiffint} it is clear that the above map is indeed a map of operads and hence that the above formula for $\nu_r$ indeed defines a $\hoLie_2$-structure.

The main result here is that the above structure is trivial.
\begin{prop}\label{prop:Ltrivial}
The map $\hoLie_2\to \TRT$ is homotopic to the trivial map sending all generators $\nu_r$ for $r\geq 2$ to zero.
In particular, for any pre-Lie algebra $\alg g$ with Maurer-Cartan element $\alpha$ and $\nu_r$ as above there is an $L_\infty$-isomorphism
\[
(\fg^\alpha[1], 0) \to (\fg^\alpha[1], \nu_r) 
\]
where the left-hand side is understood as abelian dg Lie algebra.
\end{prop}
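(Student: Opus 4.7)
The strategy is obstruction theory via Koszul duality. Since $\hoLie_2 = \Omega(\Lie^\vee\{1\})$, operad maps $\hoLie_2 \to \TRT$ correspond to Maurer--Cartan elements of the convolution dg Lie algebra $\Conv := \Hom_\bbS(\Lie^\vee\{1\}, \TRT)$, and homotopies of such maps correspond to gauge equivalences, equivalently to MC elements in $\Conv \otimes \K[t,dt]$ restricting to the endpoints at $t=0,1$. The given map corresponds to an MC element $\mu \in \Conv$ and the trivial map to the zero MC element; the goal is to exhibit a gauge equivalence between them.

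The key observation is that the subspace $\op I \subset \TRT$ spanned by graphs containing at least one black vertex is an operadic ideal (grafting preserves black-vertex counts) and is acyclic in each arity by Remark \ref{rem:exact}. The MC element $\mu$ sends every arity-$r$ cogenerator with $r \geq 2$ into $\op I$, and the convolution bracket preserves $\op I$-valuedness. Hence the entire gauge equivalence can be built inside $\op I$.

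I would proceed by induction on arity $r$, producing degree-$0$ elements $h_r \in \op I(r)$ of $\Conv$ such that
\[
 d h_r = T_r + C_r(h_2,\dots,h_{r-1}),
\]
where $C_r$ is the correction polynomial arising from the gauge-transformation equation applied to the previously constructed data. At each step the right-hand side is a cocycle lying in $\op I(r)$---the summand $T_r$ has a black vertex, and every term of $C_r$ involves some previously constructed $h_k \in \op I$---so by acyclicity of $\op I$ a primitive $h_r$ exists and the induction proceeds. The assembled collection $(h_r)_{r\geq 2}$ furnishes the gauge equivalence, establishing the first assertion.

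For the second claim, applying the operadic homotopy to the specific $\TRT$-algebra $\fg^\alpha$ yields a homotopy of $\hoLie_2$-structures on $\fg^\alpha[1]$ from the trivial one to $(\nu_r)$, which integrates to an $L_\infty$-isomorphism since both structures share the same underlying graded vector space. The main technical point is verifying at each inductive stage that $C_r$ indeed lies in the acyclic ideal $\op I$; this follows immediately from the fact that every relevant composition involves at least one factor with a black vertex, but requires careful bookkeeping of the signs and combinatorics in the gauge-transformation formulas.
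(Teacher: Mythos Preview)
Your overall strategy---obstruction theory via the convolution dg Lie algebra, induction on arity, and appeal to Remark~\ref{rem:exact}---is correct and is exactly the route the paper takes. The paper's version is only slightly more explicit: it writes the homotopy as $F=f_t+dt\,h_t$ with the concrete rescaling $f_t(\nu_r)=t^{r-1}f_1(\nu_r)$, and then solves $d_{\TRT}h_t(\nu_r)=-\partial_t f_t(\nu_r)-h_t(d_{\hoLie_2}\nu_r)$ inductively, invoking Remark~\ref{rem:exact} at each step. So conceptually you and the paper agree.

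There is, however, a small but genuine slip in your bookkeeping. Black vertices carry degree $-1$, so a degree~$0$ element of $\TRT(r)$ cannot contain any black vertex; in particular $h_r\notin\op I$, contrary to what you write. Relatedly, $\op I$ is \emph{not} acyclic: from the short exact sequence $0\to\op I\to\TRT\to\RT\to 0$ (with zero differential on the quotient) one gets $H^{-1}(\op I(r))\cong\RT(r)/\Lie(r)\neq 0$ for $r\geq 2$. What Remark~\ref{rem:exact} actually gives is that a cocycle lying in $\op I$ is exact \emph{in $\TRT$}, and that is all you need: the primitive $h_r$ lives in degree~$0$, hence outside $\op I$. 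The reason the next obstruction $T_{r+1}+C_{r+1}$ still lands in $\op I$ is not that the $h_k$ are in $\op I$, but that every term of the gauge expansion involves either some $\mu_l$ (which has a black root) or some $dh_k$ (which acquires a black vertex from the differential). Once you adjust the justification in this way, your argument goes through and matches the paper's.
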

\begin{proof}
We have to construct a map 
\[
F: \hoLie_2\to \TRT[t,dt]
\]
interpolating from the trivial map at $t=0$ to the non-trivial one at $t=1$. 
We write $F= f_t +dt h_t$.
For the $0$-form part $f_t$ we just rescale $\nu_r$ above by $t^{r-1}$,
\[
f_t(\nu_r) = t^{r-1} f_1(\nu_r).
\] 
Now $h_t$ has to satisfy 
\[
d_{\TRT} h_t(\nu_r)  = -\partial_t f_t(\nu_r) - h_t(d_{\hoLie_2}\nu_r).
\]
This can be solved inductively on the arity $r$, noting that the right-hand side is closed and hence exact by Remark \ref{rem:exact}.
\end{proof}

While the above proof is very short, it is not constructive. Let us also provide a very simple explicit formula for the $L_\infty$-morphism.
\begin{prop}\label{prop:Ltrivialexpl}
An $L_\infty$-isomorphism
\[
W \colon (\fg^\alpha[1], 0) \to (\fg^\alpha[1], \nu_r) 
\]
is provided by the explicit formulas
\[
 W_r(x_1,\dots, x_r)
 =
 \frac 1 {r!} \sum_{\sigma\in S_r}
 \pm 
 (\cdots(x_{\sigma(1)}\bullet x_{\sigma(2)}) \bullet \cdots ) \bullet x_{\sigma(r)}  ,
\]
where $x_1,\dots,x_r\in \fg$ are homogeneous elements and the sign is the lexicographic one.
\end{prop}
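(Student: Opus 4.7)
The strategy is to verify the $L_\infty$-morphism axioms for $W$ by direct computation, using the pre-Lie identities developed earlier. Since the source $(\fg^\alpha[1],0)$ is an abelian dg Lie algebra with differential $\delta := d + [\alpha,-]$, the axiom at arity $r$ takes the form
\[
\delta\, W_r(x_1,\ldots,x_r) - \sum_{i=1}^r (-1)^{\varepsilon_i}\, W_r(x_1,\ldots,\delta x_i,\ldots,x_r)
= \sum_{k=2}^r \frac{1}{k!} \sum_{I_1\sqcup\cdots\sqcup I_k=\{1,\ldots,r\}} (-1)^{\varepsilon}\, \alpha \bullet \bigl(W_{|I_1|}(x_{I_1}),\ldots,W_{|I_k|}(x_{I_k})\bigr),
\]
with Koszul signs. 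I would plug in the explicit formula for $W_r$ and expand both sides.

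The central computational tool is Lemma~\ref{lem:preliederivation}, applied with $x=\alpha$ and $y_i = x_{\sigma(i)}$. It rewrites $[\alpha,(\cdots(x_{\sigma(1)}\bullet x_{\sigma(2)})\bullet\cdots)\bullet x_{\sigma(r)}]$ as a ``head'' term $\pm(\cdots(\alpha\bullet x_{\sigma(1)})\bullet\cdots)\bullet x_{\sigma(r)}$ minus a ``tail'' term $\pm(\cdots(x_{\sigma(1)}\bullet\cdots)\bullet x_{\sigma(r)})\bullet \alpha$, plus a sum $\sum_j \pm(\cdots\bullet[\alpha,x_{\sigma(j)}]\bullet\cdots)$ that precisely cancels the $[\alpha,-]$ part of $\sum_i W_r(\ldots,\delta x_i,\ldots)$. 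The $d$-contributions are handled by the Leibniz rule and the Maurer--Cartan equation $d\alpha = -\alpha\bullet\alpha$, which telescope the remaining terms.

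To package the head/tail combinatorics efficiently, I would pass to a generating-function formulation in the spirit of Lemma~\ref{lem:distributive}. Introducing a formal variable $\xi\in\fg$ of degree $-1$ with $\delta\xi=0$, the explicit formula yields $W_\ast(\xi) := \sum_{n\geq 1}\tfrac{1}{n!}W_n(\xi,\ldots,\xi) = e_\xi$, where $e_\xi = \sum_{n\geq 1}\tfrac{1}{n!}(\cdots(\xi\bullet\xi)\bullet\cdots)\bullet\xi$ is the pre-Lie exponential. The full family of $L_\infty$-morphism axioms then compresses to the single Maurer--Cartan transfer identity
\[
\delta\, e_\xi + \sum_{k\geq 2}\frac{1}{k!}\, \alpha \bullet \underbrace{(e_\xi,\ldots,e_\xi)}_{k} = 0.
\]
Using \eqref{equ:temp8} to evaluate the $k$-sum, together with the ODE-in-$\lambda$ technique from the proof of Lemma~\ref{lem:distributive}, this identity reduces to a direct consequence of the pre-Lie relation and the Maurer--Cartan equation for $\alpha$. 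The multi-input axioms are then recovered by polarization: extending scalars to $\K[t_1,\ldots,t_r]$ (possibly further tensored with $\Omega^\bullet(\Delta^r)$ to also capture non-closed inputs) and setting $\xi = \sum t_i x_i$ produces the full $L_\infty$-morphism equation as a coefficient of $t_1\cdots t_r$.

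The main obstacle will be the sign bookkeeping. The pre-Lie exponential $e_\xi$ is most naturally set up for $\xi$ of even degree, whereas here $\xi$ has odd degree; extending Lemma~\ref{lem:distributive} to this parity (via an essentially identical ODE argument, with signs) requires care, as does the matching of the Koszul signs in the shifted $L_\infty$-structure with the lexicographic signs in the symmetrization over $S_r$. Once these signs are pinned down, the combinatorial identification of the head-term residue with the partition sum $\sum_{k\geq 2}\tfrac{1}{k!}\alpha\bullet(W_{|I_1|}(x_{I_1}),\ldots,W_{|I_k|}(x_{I_k}))$ is automatic, since attaching a root labelled $\alpha$ to a forest of left-normed spines is precisely the combinatorial structure encoded by the right-hand side.
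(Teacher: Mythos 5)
Your proposal is correct and follows essentially the same route as the paper: graded polarization to a single formal element, Lemma \ref{lem:preliederivation} with $x=\alpha$ to produce the head/tail terms and cancel the $[\alpha,-]$-contributions, and \eqref{equ:temp8} of Lemma \ref{lem:distributive} to identify the partition sum $\sum_{k}\tfrac{1}{k!}\alpha\bullet(e_X,\dots,e_X)$ with $E_X\alpha$. The parity worry you flag as the main obstacle is sidestepped in the paper by polarizing with a formal element $X$ of degree zero (in the unshifted $\fg$), so Lemma \ref{lem:distributive} applies verbatim and no odd-degree extension of the pre-Lie exponential is needed.
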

\begin{proof}
Clearly $W_1$ is the identity map, so if the $L_\infty$-relations hold then $W$ is an $L_\infty$-isomorphism.
 The $L_\infty$-relations (to be shown) read
 \begin{multline*}
  \sum_{j=1}^r \pm W_r(x_1,\dots,dx_j,\dots, x_r)
  \\
  \stackrel{?}=
  d W_r(x_1,\dots, x_r)
  +
  \sum_{s=2}^r
  \frac{1}{s!}
  \sum_{j_1+\dots+j_s=r}
  \frac 1 {j_1!\cdots j_s!}
  \sum_{\sigma\in S_r}
  \alpha \bullet ( W_{j_1}(x_{\sigma(1)},\dots),\dots, W_{j_s}(\dots , x_{\sigma(r)} )  ) .
 \end{multline*}
Both sides are symmetric multilinear in the $x_j$. Hence, by graded polarization it is enough to show the formula for a formal element $X$ of degree zero. Summing over all $r\geq 1$ with prefactor $\frac 1 {r!}$ it suffices to check that 
 \begin{multline*}
  \sum_{r\geq 1} \frac 1 {r!} \sum_{j=1}^r  W_r(X,\dots,dX,\dots, X)
  \\
  \stackrel{?}=
  \sum_{r\geq 1} \frac 1 {r!} d W_r(X,\dots, X)
  +
  \sum_{r\geq 1} \frac 1 {r!} \sum_{s=2}^r
  \frac{1}{s!}
  \sum_{j_1+\dots+j_s=r}
  \frac 1 {j_1!\cdots j_s!}
  \sum_{\sigma\in S_r}
  \alpha \bullet ( W_{j_1}(X,\dots,X),\dots, W_{j_s}(X,\dots , X )  ) .
 \end{multline*} 
Note that $dx=[\alpha,X]$ and hence, using Lemma \ref{lem:preliederivation} the left-hand side becomes 
\[
\sum_{r\geq 1} \frac 1 {r!} \sum_{j=1}^r  \left( (\cdots(\alpha\bullet X)\bullet \cdots \bullet X
 -
  (\cdots(X\bullet X)\bullet \cdots \bullet X)\bullet \alpha
 \right).
\]
Note that using the notation of Lemma \ref{lem:distributive} we may write the two terms as 
\[
E_X\alpha -  e_X \bullet \alpha
\]
where we understood $\alg g$ as a pre-Lie module over $\alg g$.
Using the same notation, the right-hand side of the $L_\infty$-relations may be rewritten as
 \[
 [\alpha, e_X]
  +
  \sum_{s\geq 2}
  \frac{1}{s!}
  \alpha \bullet ( \underbrace{e_X.\dots,e_X}_{s\times} ) 
  =
  -e_X\bullet \alpha
  +
  \sum_{s\geq 1}
  \frac{1}{s!}
  \alpha \bullet ( \underbrace{e_X.\dots,e_X}_{s\times} ) 
  .
 \]
Using Lemma \ref{lem:distributive} the $L_\infty$-relations hence follow immediately.
\end{proof}

\begin{ex}
Let us note in particular that the Lie bracket $\nu_2$ is rendered exact by the homotopy 
\[
\frac 1 2\left(
\begin{tikzpicture}[baseline=-.65ex]
\node (v) at (0,.3) {$1$};
\node (w) at (0,-.3) {$2$};
\draw (v) edge (w) edge +(0,.5);
\end{tikzpicture} 
+
\begin{tikzpicture}[baseline=-.65ex]
\node (v) at (0,.3) {$2$};
\node (w) at (0,-.3) {$1$};
\draw (v) edge (w) edge +(0,.5);
\end{tikzpicture} 
\right).
\]
\end{ex}

\subsection{Twisting of (pre-)Lie modules}\label{sec:twistingmodules}
Now suppose that $\fg$ is a pre-Lie algebra and $M$ is a right $\fg$-module.
Let $\alpha\in \fg$ be a Maurer-Cartan element.
Then the action of $\alpha$ induces a differential $d+(-\circ \alpha)$ on $M$. Denote the dg vector space $M$ equipped with this differential by $M^\alpha$.
Then the twisted Lie algebra $\fg^\alpha$ acts from the right on $M^\alpha$.

Now suppose that $M$ in addition carries an action of the operad $\hoLie$, compatible with the right $\fg$-action in the sense of definition \ref{def:prelie}.
Then the twisted differential naturally also respects the $\hoLie$-structure, so we have a $\hoLie$ structure an $M^\alpha$ compatible with the right $\fg^\alpha$-action.

Let now $m\in M^\alpha$ be a Maurer-Cartan element. (We suppose here that $M$ is either pro-nilpotent or has only finitely many non-vanishing $L_\infty$-operations, so that the Maurer-Cartan equation makes sense.)
Twisting by $m$ produces another $\hoLie$-algebra 
\[
M^{\alpha,m} := (M^\alpha)^m.
\]
We note that the right $\fg^\alpha$-action is in general not compatible with the twisted $\hoLie$-structure.
Rather, we have a map 
\begin{gather*}
U_1 : \fg[1] \to M^{\alpha,m}
\\ 
x\mapsto \alpha \circ x, 
\end{gather*}
such that for $m_1,\dots,m_r\in M$, $x\in \fg$ and $\mu_r$ the $r$-th (twisted) $L_\infty$-operation of $M^{\alpha,m}$
\[
\mu_r(m_1,\dots,m_r) \circ x=  \sum_{j=1}^r \pm \mu_r(m_1,\dots, m_j\circ x,\dots,m_r) 
+
\mu_{r+1}(m_1,\dots,m_r,U_1(x)). 
\]

Now our main results will be derived from the following general Theorem.
\begin{thm}\label{thm:main_alg}
For $\fg,M,m,\alpha$ as above the operations 
\[
U_r(x_1,\dots,x_2) = m \circ (x_1,\dots,x_r)
\]
define an $L_\infty$-morphism
\[
(\fg^\alpha[1], \nu_r) \to (M^{\alpha,m}, \mu_r).
\]
\end{thm}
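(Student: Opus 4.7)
The plan is to verify the $L_\infty$-morphism relations for $U$ by a generating-function argument in the spirit of Lemma \ref{lem:distributive}. Since the operations $U_r$, $\nu_r$, and $\mu_r$ are all graded multilinear and graded symmetric, by polarization it suffices to check the relations after substituting all arguments equal to a single formal degree-$0$ element $\lambda X \in \fg[[\lambda]]$ and summing with factors $\lambda^n/n!$ over arities $n$. Equivalently, invoking the Maurer-Cartan characterization of $L_\infty$-morphisms, one needs to show that
\[
 \tilde m(\lambda) := \sum_{r\geq 1} \frac{\lambda^r}{r!}\, m \circ (\underbrace{X,\dots,X}_{r\times})
\]
is a Maurer-Cartan element of $M^{\alpha,m}[[\lambda]]$ whenever $\lambda X$ is Maurer-Cartan in $\fg^\alpha[1][[\lambda]]$.

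The identity would then be checked by a formal ODE argument in $\lambda$, exactly as in the proof of Lemma \ref{lem:distributive}: both sides vanish at $\lambda = 0$, and one shows that $\frac{d}{d\lambda}$ of the target Maurer-Cartan expression for $\tilde m(\lambda)$ equals the pre-Lie right action of $X$ on that same expression, plus contributions which vanish by (i) the source MC equation for $\lambda X$ and (ii) the hypothesis that $m$ is MC in $M^\alpha$. By uniqueness of solutions to linear ODEs with vanishing initial value, the MC expression for $\tilde m(\lambda)$ is then identically zero. The key computational inputs are formulas \eqref{equ:temp8}--\eqref{equ:temp9} of Lemma \ref{lem:distributive}, which encode how the pre-Lie action distributes across iterated multi-actions, and the compatibility axiom of Definition \ref{def:prelie}, which lets one exchange insertions of $X$ into the arguments of the $\hoLie$-operations on $M$ with the action of $X$ on the result.

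The main obstacle will be careful bookkeeping in expanding the twisted operations $\mu_r$ of $M^{\alpha,m}$---each itself an infinite sum over insertions of $m$ into the original $\hoLie$-operations on $M$---and in reorganizing the $\lambda$-derivative to match the form predicted by the pre-Lie compatibility. Conceptually, the statement asserts that the exponentiated pre-Lie action of $\lambda X$ on $M^\alpha$ carries Maurer-Cartan loci to Maurer-Cartan loci, and the proof is the derivative-based verification of this invariance; the manipulations required are direct analogues of those appearing in the proofs of Lemma \ref{lem:distributive} and Proposition \ref{prop:Ltrivialexpl}.
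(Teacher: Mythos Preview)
Your approach differs from the paper's. The paper verifies the $L_\infty$-relations for $U$ by a direct, arity-by-arity computation: it applies the multi-action $(-)\circ(x_1,\dots,x_r)$ to the Maurer-Cartan equation \eqref{equ:mMC} for $m$ in $M^\alpha$, uses the compatibility axiom of Definition~\ref{def:prelie} to distribute this action over each summand, and then regroups the resulting terms into exactly the $L_\infty$-relation for $U_r$. No generating functions, no ODE, no hypotheses on the inputs $x_j$.

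Your generating-function strategy is natural---it mirrors the paper's own proof of Proposition~\ref{prop:Ltrivialexpl}---but there is a genuine gap at the ``Equivalently'' step. Polarization does reduce the $L_\infty$-relations to a single identity in $M[[\lambda]]$ for a \emph{generic} formal $X$; that identity reads
\[
 \mathrm{curv}_{M^{\alpha,m}}\bigl(\tilde m(\lambda)\bigr)
 \;=\;
 \sum_{s\geq 0}\frac{1}{s!}\, m\circ\bigl(\mathrm{curv}_{(\fg^\alpha[1],\nu)}(\lambda X),\,\underbrace{\lambda X,\dots,\lambda X}_{s}\bigr),
\]
with the source curvature sitting explicitly on the right. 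This is \emph{not} equivalent to the implication ``$\tilde m(\lambda)$ is MC whenever $\lambda X$ is MC'': the implication only tests the identity modulo the ideal generated by the source curvature. For instance, if the source had $\nu_r=0$ for $r\geq 2$ but $\nu_1=d^\alpha\neq 0$, your condition would constrain $U$ only on $d^\alpha$-closed inputs, whereas the $L_\infty$-relations must hold for all inputs. So your items (i) and (ii) are not on the same footing: (ii) is a standing hypothesis and the corresponding terms really do vanish, but the ``(i)'' terms do not vanish---they have to be carried through the ODE and matched against the displayed right-hand side. That is the actual content of the theorem, and it is more bookkeeping than your outline acknowledges.

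A smaller point: your closing conceptual gloss is off by a composition. The exponentiated right pre-Lie action of $\lambda X$ sends $m$ to $m+E_{\lambda X}m$, which by Lemma~\ref{lem:distributive} is $m+\sum_{j\geq 1}\tfrac{1}{j!}\,m\circ(e_{\lambda X},\dots,e_{\lambda X})$, not $m+\tilde m(\lambda)$. The ``action preserves MC loci'' interpretation belongs to the composite $U\circ W$ (cf.\ section~\ref{sec:GactionsonMC}), not to $U$ alone.
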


\begin{proof}
The Maurer-Cartan equation for $m$ reads
\begin{align}\label{equ:mMC}
&dm + m \circ \alpha + \sum_{k\geq 2} \frac 1 {k!} \mu_k(\alpha,\dots,\alpha) = 0.
\end{align}
We apply the operation $(-\circ (x_1,\dots,x_r))$ to this equation, giving us
\begin{multline}\label{equ:temp1}
0= d(m\circ (x_1,\dots,x_r)) - \sum_{j=1}^r \pm m\circ (x_1,\dots, dx_j,\dots, x_r)
+
\sum_{S\subset [r]} \pm m \circ (\alpha\bullet (x_S), x_{[r]\setminus S})
\\
+
\sum_{k\geq 2} \frac 1 {k!} \sum_{S1\sqcup \dots\sqcup S_k=[r]} \mu_k(\alpha\circ x_{S_1},\dots,\alpha\circ x_{S_k}).
\end{multline}
The first term is 
\beq{equ:temp3}
d(m\circ (x_1,\dots,x_r)) = d U_r(x_1,\dots,x_r).
\eeq
The second term is 
\beq{equ:temp4}
\sum_{j=1}^r \pm m\circ (x_1,\dots, dx_j,\dots, x_r) = \sum_{j=1}^r \pm U_r(x_1,\dots, dx_j,\dots, x_r).
\eeq
In the third term one may distinguish empty $S$, $S$ with one element, and $S$ with $\geq 2$ elements and obtain
\begin{align}
\label{equ:temp5}
\sum_{S\subset [r]} \pm m \circ (\alpha\bullet (x_S), x_{[r]\setminus S})
=
U_{r+1}(\alpha, x_1,\dots,x_r)
+
\sum_{j=1}^r \pm U_r(x_1,\dots ,\alpha\bullet x_j,\dots,x_r)
+
\sum_{\substack{S\subset [r] \\ |S|\geq 2}} \pm U_{r-|S|+1}(\nu_{|S|}(x_S), x_{[r]\setminus S})).
\end{align}
Finally in the fourth term of \eqref{equ:temp1} we may separate $S_j$ which are empty and non-empty. If there are $i$ non-empty $S_j$, those can be placed into $i$ of $k$ slots in ${k\choose i}$ ways. Hence we obtain
\begin{align}
&\sum_{k\geq 2} \frac 1 {k!} \sum_{S_1\sqcup \dots\sqcup S_k=[r]} \mu_k(\alpha\circ x_{S_1},\dots,\alpha\circ x_{S_k})
\nonumber \\  &=
\sum_{k\geq 2} \sum_{i=1}^k \frac 1 {i!(k-i)!} \sum_{\substack{S1\sqcup \dots\sqcup S_i=[r]\\ S_l\neq \emptyset \forall l}} \mu_k(\alpha,\dots,\alpha, U_{|S_1|}( x_{S_1}),\dots,U_{|S_i|}x_{S_i})
\nonumber \\ 
&=
\sum_{k\geq 2}\frac 1 {(k-1)!} \mu_k(\alpha,\dots,\alpha, U_{r}( x_1,\dots,x_r)
+
\sum_{i\geq 2} \frac 1 {i!} \sum_{\substack{S1\sqcup \dots\sqcup S_i=[r]\\ S_l\neq \emptyset \forall l}} \mu_k^\alpha(U_{|S_1|}( x_{S_1}),\dots,U_{|S_i|}x_{S_i})
. \label{equ:temp2}
\end{align}
Note that the first term of \eqref{equ:temp2} together with \eqref{equ:temp3} produces
\[
d^{\alpha,m} U_r(x_1,\dots,x_r) -   U_r(x_1,\dots,x_r)\circ \alpha.
\]
The second term of this equation in term is 
\begin{align}
&U_r(x_1,\dots,x_r)\circ \alpha
\nonumber \\&=
(m\circ (x_1,\dots,x_r))\circ \alpha
=
\sum_{j=1}^r \pm  m\circ (x_1,\dots,x_j\bullet \alpha,\dots, x_r)
+
m\circ (\alpha, x_1,\dots,x_j\bullet \alpha,\dots, x_r)
\nonumber
\\
&=
\label{equ:temp6}
\sum_{j=1}^r \pm  U_r (x_1,\dots,x_j\bullet \alpha,\dots, x_r)
+
U_{r+1} (\alpha, x_1,\dots,x_j\bullet \alpha,\dots, x_r)
\end{align}
The first term together with \eqref{equ:temp4} and the second term of \eqref{equ:temp5} gives
\[
\sum_{j=1}^r \pm U_r (x_1,\dots,d^m x_j,\dots, x_r).
\]
The second term of  \eqref{equ:temp6} kills the first term of \eqref{equ:temp5}.
Wrapping up all remaining terms we find
\begin{multline*}
0 =
d^{\alpha,m} U_r(x_1,\dots,x_r)
-
\sum_{j=1}^r \pm U_r (x_1,\dots,d^m x_j,\dots, x_r)
\\+
\sum_{i\geq 2} \frac 1 {i!} \sum_{\substack{S1\sqcup \dots\sqcup S_i=[r]\\ S_l\neq \emptyset \forall l}} \mu_k^\alpha(U_{|S_1|}( x_{S_1}),\dots,U_{|S_i|}x_{S_i})
-
\sum_{\substack{S\subset [r] \\ |S|\geq 2}} \pm U_{r-|S|+1}(\nu_{|S|}(x_S), x_{[r]\setminus S})).
\end{multline*}
This is precisely the $L_\infty$-relation.
\end{proof}

\subsection{A slight extension: Incorporating a derivation}
Now suppose we are in the situation of Theorem \ref{thm:main_alg}, but we have an additional piece of algebraic structure: A degree zero derivation $D$ on the $L_\infty$-algebra $M$, and a derivation also denoted by $D$ on $\fg$ which acts compatibly with the right $\fg$ action on $M$. This means that for $m_1\in M$, $x\in \fg$ we have
\[
D(m_1 \circ x) = (Dm_1) \circ x + m_1\circ (Dx).
\]
For simplicity we will also assume that $D$ annihilates our Maurer-Cartan element $\alpha\in \fg$, i.e.,
\[
D\alpha = 0.
\]
Then we have the following slight generalization.

\begin{thm}\label{thm:main_alg2}
For $\fg,M,m,\alpha,D$ as above the operations 
\begin{align*}
U_r(x_1,\dots,x_r) &= m \circ (x_1,\dots,x_r)
\\
U_{r+s}(\underbrace{D,\dots,D}_{s\times },x_1,\dots,x_r) &= (D^s m) \circ (x_1,\dots,x_r)
\end{align*}
define an $L_\infty$-morphism
\[
D \K[1] \oplus (\fg^\alpha[1], \nu_r) \to (M^{\alpha,m}, \mu_r).
\]
To be clear, the direct sum on the left-hand side is a direct sum of $L_\infty$-algebras, i.e., the element $D$ is central.  
\end{thm}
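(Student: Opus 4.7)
The plan is to deduce Theorem~\ref{thm:main_alg2} from Theorem~\ref{thm:main_alg} by a formal-parameter trick. I will introduce a degree-zero formal variable $\lambda$ and consider the deformed element
\[
 \tilde m(\lambda) := e^{\lambda D} m = \sum_{s\geq 0} \frac{\lambda^s}{s!} D^s m \in M^\alpha[[\lambda]].
\]
Using the three hypotheses on $D$---that $D$ commutes with $d$ and each $\mu_k$ as an $L_\infty$-derivation of $M$, that $D$ is compatible with the right $\fg$-action so $e^{\lambda D}(a\circ b) = (e^{\lambda D}a)\circ(e^{\lambda D}b)$, and that $D\alpha=0$---applying $e^{\lambda D}$ term-by-term to the Maurer--Cartan equation for $m$ shows that $\tilde m(\lambda)$ is again Maurer--Cartan in $M^\alpha[[\lambda]]$.

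I then apply Theorem~\ref{thm:main_alg} over the base ring $\K[[\lambda]]$ to the pair $(\fg,M)$ with Maurer--Cartan element $\tilde m(\lambda)$; this produces a $\lambda$-parametrized $L_\infty$-morphism
\[
 U^\lambda\colon (\fg^\alpha[1],\nu_r)\otimes\K[[\lambda]] \longrightarrow (M^{\alpha,\tilde m(\lambda)},\mu_r^{\alpha,\tilde m(\lambda)})
\]
with components $U^\lambda_r(x_1,\dots,x_r) = \tilde m(\lambda)\circ(x_1,\dots,x_r)$. The final step is to extract the coefficient of $\lambda^s/s!$ from each $L_\infty$-morphism relation of $U^\lambda$ and identify it with the extended relation on inputs $(D,\dots,D,x_1,\dots,x_r)$. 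The basic identification is
\[
 [\lambda^s/s!]\,U^\lambda_r(x_1,\dots,x_r) = D^s m\circ(x_1,\dots,x_r) = U_{r+s}(\underbrace{D,\dots,D}_{s\times},x_1,\dots,x_r),
\]
and the multinomial expansion of $\tilde m(\lambda)^{\otimes j}$ inside the twisted brackets $\mu_k^{\alpha,\tilde m(\lambda)} = \sum_{j\geq 0} \frac{1}{j!}\mu_{k+j}^\alpha(\tilde m(\lambda)^{\otimes j},-)$ distributes the $s$ copies of $D$ across the available $\tilde m$- and $U^\lambda$-slots. After collecting terms this precisely reproduces, on the left-hand side, the sum over distributions of the $s$ $D$-inputs into the $\mu_k^{\alpha,m}(\tilde U,\dots,\tilde U)$-slots of the extended $L_\infty$-relation. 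Since $D$ is central with trivial differential in the source $D\K[1]\oplus(\fg^\alpha[1],\nu_r)$, no source bracket $\nu_p$ ever involves $D$, which matches the $\lambda$-independence of the source side of $U^\lambda$.

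The hard part will be the combinatorial bookkeeping: checking that the factorials $\tfrac{1}{s!}$ from the coefficient extraction and $\tfrac{1}{\prod s_i!}$ from $\tilde m(\lambda)=\sum_s\tfrac{\lambda^s}{s!}D^s m$ combine with the $\tfrac{1}{k!}$ and $\tfrac{1}{j!}$ of the $L_\infty$-relation into the correct multinomial weights $\binom{s}{s_1,\dots,s_k}$ for indistinguishable $D$-inputs distributed into the slots. Since both sides arise from the same symmetric-group counting, the identities balance; as a sanity check, $s=0$ returns Theorem~\ref{thm:main_alg} unchanged, and the case $s=1$, $r=1$ recovers the single new relation $\mu_1^{\alpha,m}(Dm\circ x) + \mu_2^{\alpha,m}(Dm, m\circ x) = Dm\circ \nu_1(x)$ by direct differentiation in $\lambda$ at $\lambda=0$.
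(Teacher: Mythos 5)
Your argument is correct and is essentially the paper's own proof in generating-function form: the paper applies $D^s$ directly to the Maurer--Cartan equation for $m$ and observes that the Leibniz rule (together with $D\alpha=0$) distributes the $D$'s multinomially over the copies of $m$, which is exactly what your extraction of the coefficient of $\lambda^s/s!$ from the relations for $\tilde m(\lambda)=e^{\lambda D}m$ produces. The only cosmetic difference is that you invoke Theorem \ref{thm:main_alg} as a black box over $\K[[\lambda]]$ and then re-expand $M^{\alpha,\tilde m(\lambda)}$ around $M^{\alpha,m}$, whereas the paper reruns that proof with the $D^{j_i}m$ insertions already in place.
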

\begin{proof}
We consider the Maurer-Cartan equation for $m$, i.e., \eqref{equ:mMC}.
We first apply $D^s$ to this equation, and then compose with $(- \circ (x_1,\dots,x_r))$.
Then we simplify exactly as in the proof of Theorem \ref{thm:main_alg}. The net effect of pre-applying the operation $D^s$ is merely that summands with $k$ copies of $m$
\[
\underbrace{\dots m \dots m \dots m}_{\text{$k$ many $m$'s}}
\]
are replaced by the exact same summands, but with each $m$ precomposed by some $D^j$, so that the terms become
\[
\sum_{j_1+j_2+\dots+j_k =s}\frac {s!}{j_1!\cdots j_k!}
\underbrace{\dots (D^{j_1}m) \dots (D^{j_2}m) \dots (D^{j_k}m)}_{\text{$k$ many $m$'s}}.
\]
Making these replacement one arrives at the $L_\infty$-condition for the $L_\infty$-morphism defined in the Theorem.
\end{proof}

\section{Actions on and maps of Maurer-Cartan elements}\label{sec:GactionsonMC}
We continue the discussion of a pre-Lie algebra $\alg g$ with Maurer-Cartan element $\alpha\in \alg g$, a pre-Lie action on the $L_\infty$-algebra $M$, and a Maurer-Cartan element $m\in M$.
Using the $L_\infty$-morphisms of Proposition \ref{prop:Ltrivialexpl} and \ref{thm:main_alg} we obtain a chain of $L_\infty$-morphism 
\[
 (\alg g^\alpha[1],0) \xrightarrow{W} (\alg g^\alpha[1],\nu_r) \xrightarrow{U} (M^{\alpha,m},\mu_r).
\]
Such $L_\infty$-morphisms induce maps of Maurer-Cartan elements on the $L_\infty$-algebras involved, which we will discuss in this section.
In order to settle certain technical (but in practice non-critical) convergence issues, let us make the following assumption:

\medskip

{\bf Filtration assumption:} We assume that there are descending complete filtrations 
\begin{align*}
M &= \mF^0M \supset \mF^1 M\supset \cdots
&
\alg g &= \mF^0\alg g \supset \mF^1 \alg g \supset \cdots
\end{align*}
so that the pre-Lie structure on $\alg g$, the pre-Lie action on $M$ and the $L_\infty$-structure on $M$ are (additively) compatible with the filtrations.

\medskip

Since the $L_\infty$-morphisms above are built using only the natural algebraic structures they are also compatible with the filtrations.
Now, using the above filtrations we define the sets of Maurer-Cartan elements as elements of $\mF^1$ satisfying the Maurer-Cartan equation. Concretely, this means that $\MC( \alg g^\alpha[1],0)$ is just the set of degree 0 cocycles in $\mF^1\alg g^\alpha$,
while
\[
\MC(\alg g^\alpha[1], \nu_r)
=
\{\beta \in \mF^1 \alg g^\alpha[1] ; |\beta|=1, \sum_{r\geq 1}\frac 1 {r!} \nu_r(\beta,\dots,\beta)=0
\}
\]
and $\MC(M^{\alpha,m})$ is the set of degree 1 elements $m'\in \mF^1M$ satisfying 
\[
 \sum_{r\geq 1} \frac 1 {r!} \mu_r(m',\dots,m')=0,
\]
where the convergence of the series is guaranteed by the completeness of the filtration $\mF$, together with our assumption that the Maurer-Cartan element lives in $\mF^1$ in each case.

Now, by standard results of homological algebra, $L_\infty$-morphisms compatible with the filtrations (such as $W,U$ above) induce maps of the sets of Maurer-Cartan elements
\[
 \MC(\alg g^\alpha[1],0) \to \MC(\alg g^\alpha[1],\nu_r) \to \MC(M^{\alpha,m}),
\]
where, e.g., $\beta\in \MC(\alg g^\alpha[1],0)$ is sent to 
\[
 \sum_{r\geq 1} \frac 1 {r!} W_r(\beta,\dots, \beta).
\]
Again the sum converges because of the completeness of the filtration.

\begin{rem}
 The reader should consider the filtration above not as an essential piece of data but just as a technical nuissance whose presence we have to assert to ensure that some infinite series are converging. In practice it will eventually not play any essential role.
\end{rem}

\subsection{Actions of \texorpdfstring{$\alg g$}{g} on Maurer-Cartan elements}
Let $\alg g^\alpha_{0,\closed}\subset \alg g^\alpha$ be the Lie subalgebra of degree 0 cocycles.
The exponential group $\Exp(\mF^1\alg g^\alpha_{0,\closed})$ acts on $\MC(M^{\alpha,m},0)$ (which can be identified with $\mF^1\alg g^\alpha_{0,\closed}$) by the Baker-Campbell-Hausdorff-formula
\[
 \beta \cdot \exp(x) = \BCH(\beta, x) = \beta+x+\frac 1 2[\beta,x] + \cdots,
\]
where $\beta \in \MC(M^{\alpha,m})\cong \mF^1\alg g^\alpha_{0,\closed}$ and $x\in\mF^1\alg g^\alpha_{0,\closed}$.
The following result is obvious.
\begin{lemma}\label{lem:simplefreetransitive0}
 The action of $\Exp(\mF^1\alg g^\alpha_{0,\closed})$ on $\MC(\alg g^\alpha[1],0)$ is free and transitive.
\end{lemma}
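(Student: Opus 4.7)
The plan is to observe that both objects in question are, as sets, literally identified with the same underlying set $\mF^1\alg g^\alpha_{0,\closed}$, and that the action described is then nothing but the group multiplication of the exponential group on itself. From this point of view, freeness and transitivity are formal properties of the regular representation.

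Concretely, the first step is to unpack the identifications. By the definition just preceding the lemma, $\MC(\alg g^\alpha[1],0)$ consists (after the degree shift) precisely of the degree $0$ cocycles in $\mF^1\alg g^\alpha$, i.e., of the set $\mF^1\alg g^\alpha_{0,\closed}$. On the other hand, the exponential group $\Exp(\mF^1\alg g^\alpha_{0,\closed})$ is by definition the set $\mF^1\alg g^\alpha_{0,\closed}$ equipped with the Baker--Campbell--Hausdorff product; this product converges thanks to the completeness of the filtration $\mF$ and the fact that a single bracket lands in $\mF^2$. Under these two identifications, the action $\beta\cdot \exp(x)=\BCH(\beta,x)$ is literally the multiplication of the group $\Exp(\mF^1\alg g^\alpha_{0,\closed})$ on itself from the right.

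The second step is to check well-definedness, i.e., that $\BCH(\beta,x)$ still lies in $\mF^1\alg g^\alpha_{0,\closed}$. Since $\BCH$ is built from iterated Lie brackets and the twisted differential $d+[\alpha,-]$ is a graded derivation of the bracket, brackets of closed degree $0$ elements are again closed of degree $0$; compatibility with the filtration is immediate from the filtration assumption. Associativity of the action follows from associativity of $\BCH$.

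Finally, freeness and transitivity are automatic for the regular right action of a group on itself: given $\beta,\beta'\in \mF^1\alg g^\alpha_{0,\closed}$, the element $x:=\BCH(-\beta,\beta')$ lies in $\mF^1\alg g^\alpha_{0,\closed}$ and satisfies $\BCH(\beta,x)=\beta'$, which gives transitivity; and $\BCH(\beta,x)=\beta$ forces $x=0$ by the existence of the two-sided inverse $-\beta$ for $\beta$ in $\Exp(\mF^1\alg g^\alpha_{0,\closed})$, which gives freeness. There is no real obstacle; the only point requiring any care is the convergence of the BCH series, which is handled once and for all by invoking the completeness of $\mF$.
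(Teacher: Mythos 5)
Your proof is correct and is exactly the argument the paper has in mind: the paper gives no proof at all, simply declaring the result obvious, since under the stated identifications the action is the right regular action of $\Exp(\mF^1\alg g^\alpha_{0,\closed})$ on itself. Your elaboration (well-definedness via closedness of brackets of closed degree-$0$ elements, convergence from completeness of $\mF$, and freeness/transitivity from the group structure with inverse $-\beta$) fills in the details faithfully.
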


Similarly, $\mF^1\alg g^\alpha_{0,\closed}$ acts on $\MC(\alg g^\alpha[1],\nu_r)$ by the formula
\[
 \beta \cdot x = x + \beta \bullet x,
\]
for $\beta \in \MC(g^\alpha[1],\nu_r)$ and $x\in\mF^1\alg g^\alpha_{0,\closed}$
This action integrates to an action of the exponential group $\Exp(\mF^1\alg g^\alpha_{0,\closed})$ on $\MC(g^\alpha[1])$ such that 
\beq{equ:gactiononMCg}
 \beta \cdot \exp(x) 
 :=
 \beta +
 \sum_{j\geq 1} \frac 1 {j!}\left( \underbrace{ (\cdots (\beta \bullet x)\cdots )\bullet x)}_{j\times} 
 +
 \underbrace{ x \bullet \cdots \bullet x}_{j\times} 
 \right)
\eeq

The following Lemma is relatively simple to show directly. (We leave it to the reader.)

\begin{lemma}\label{lem:simplefreetransitive}
 The action of $\Exp(\mF^1\alg g^\alpha_{0,\closed})$ on $\MC(\alg g^\alpha[1], \nu_r)$ is free and transitive.
\end{lemma}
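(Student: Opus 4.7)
The plan is to combine the $L_\infty$-isomorphism $W$ of Proposition~\ref{prop:Ltrivialexpl} with a filtration argument: I would reduce transitivity to describing the orbit of the trivial Maurer-Cartan element $0 \in \MC(\alg g^\alpha[1], \nu_r)$ via $W$, while freeness I would obtain directly from the completeness of the filtration.

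For transitivity, first note that $0$ is trivially a Maurer-Cartan element and, by direct inspection of \eqref{equ:gactiononMCg}, the orbit of $0$ equals $\{0 \cdot \exp(x) : x \in \mF^1\alg g^\alpha_{0,\closed}\} = \{e_x : x \in \mF^1\alg g^\alpha_{0,\closed}\}$ (the $E_x \cdot 0$ terms vanish). Since $W$ is an $L_\infty$-isomorphism, it induces a bijection $W_* \colon \MC(\alg g^\alpha[1], 0) \to \MC(\alg g^\alpha[1], \nu_r)$. For $\gamma \in \MC(\alg g^\alpha[1], 0) = \mF^1\alg g^\alpha_{0,\closed}$, the arguments being of degree $0$ in $\alg g^\alpha$ makes all Koszul signs in the formula for $W_r$ trivial, so the $r!$ terms in the sum over $S_r$ are identical, and one finds
\[
 W_*(\gamma) = \sum_{r\geq 1} \frac{1}{r!}\, W_r(\gamma,\dots,\gamma) = \sum_{r\geq 1} \frac{1}{r!}\,\underbrace{(\cdots(\gamma\bullet\gamma)\bullet\cdots)\bullet\gamma}_{r\times} = e_\gamma.
\]
Therefore the image of $W_*$ coincides with the orbit of $0$, so every Maurer-Cartan element lies in that orbit. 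Transitivity of the group action then follows: for any $\beta, \beta' \in \MC(\alg g^\alpha[1], \nu_r)$, writing $\beta = 0 \cdot \exp(\gamma)$ and $\beta' = 0 \cdot \exp(\gamma')$ we have $\beta \cdot \exp(\BCH(-\gamma, \gamma')) = \beta'$, using that $\cdot$ is a genuine group action, obtained by integrating the infinitesimal action $x \mapsto x + \beta \bullet x$.

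For freeness, I would suppose $\beta \cdot \exp(x) = \beta$ with $x \in \mF^1\alg g^\alpha_{0,\closed}$, i.e., $E_x\beta + e_x = 0$, and show inductively that $x \in \mF^N$ for every $N \geq 1$, whence $x = 0$ by the completeness of the filtration. If $x \in \mF^N$, then each term of $E_x\beta = \beta\bullet x + \frac{1}{2}(\beta\bullet x)\bullet x + \cdots$ contains both $\beta \in \mF^1$ and some $x \in \mF^N$, so $E_x\beta \in \mF^{N+1}$; similarly $e_x = x + \frac{1}{2}x\bullet x + \cdots \equiv x \pmod{\mF^{2N}} \subseteq \mF^{N+1}$. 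Substituting into $E_x\beta + e_x = 0$ forces $x \equiv 0 \pmod{\mF^{N+1}}$, which closes the induction.

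The main point requiring care is the computation $W_*(\gamma) = e_\gamma$, which amounts to unwinding the Koszul-sign conventions in Proposition~\ref{prop:Ltrivialexpl} for equal even-degree arguments, together with ensuring the group-action identity $\beta\cdot \exp(x)\cdot \exp(y) = \beta \cdot \exp(\BCH(x,y))$ is valid on $\MC(\alg g^\alpha[1], \nu_r)$; both steps are routine bookkeeping once one verifies (via a computation analogous to that in the proof of Lemma~\ref{lem:distributive}) that the vector fields $\beta \mapsto x + \beta\bullet x$ close under the pre-Lie commutator into a Lie algebra isomorphic to $\alg g^\alpha_{0,\closed}$.
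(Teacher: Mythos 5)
The paper gives no proof of this lemma --- it is explicitly left to the reader --- so there is nothing to compare against; judged on its own, your argument is correct. Reducing transitivity to the identity $W_*(\gamma)=e_\gamma$ (valid since all arguments have degree $0$, so the $r!$ summands in $W_r$ coincide with sign $+1$) together with the observation that the orbit of $0$ is exactly $\{e_x\}$, and then invoking bijectivity of $W_*$ on Maurer--Cartan sets, is a clean organization; the freeness induction on the filtration, using $\mF^i\bullet\mF^j\subset\mF^{i+j}$ to get $E_x\beta\in\mF^{N+1}$ and $e_x\equiv x \pmod{\mF^{N+1}}$, is likewise sound. The one ingredient you rely on that the paper also asserts without proof is that the infinitesimal action integrates to a genuine action of $\Exp(\mF^1\alg g^\alpha_{0,\closed})$ satisfying the $\BCH$ composition law; you correctly pinpoint that this rests on the right pre-Lie module identity, which for degree-zero elements gives $(\beta\bullet x)\bullet y-(\beta\bullet y)\bullet x=\beta\bullet[x,y]$ and hence makes $x\mapsto(\beta\mapsto x+\beta\bullet x)$ a morphism of Lie algebras into affine vector fields, so this step is indeed routine.
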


The Lie algebra $\mF^1\alg g^\alpha_{0,\closed}$ also acts on $\MC(M^{\alpha,m})$ by the formula 
\[
 \beta \cdot x  = m\bullet x + \beta \bullet x,
\]
for $\beta \in \MC(M^{\alpha,m})$ and $x\in\mF^1\alg g^\alpha_{0,\closed}$.
Again this action integrates to an action of the exponential group $\Exp(\mF^1\alg g^\alpha_{0,\closed})$ on $\MC(M^{\alpha,m})$.
More concretely, $\exp(x)$ sends the Maurer-Cartan element $m'$ to
\[
 m' \cdot \exp(x) 
 :=
 m' + \sum_{j\geq 1} \frac 1 {j!} \underbrace{ (\cdots ((m'+m) \bullet x)\cdots )\bullet x)}_{j\times}.
\]

\subsection{Compatibility of the map with the actions on Maurer-Cartan elements}
Now we have three spaces of Maurer-Cartan elements related to each other by the maps $W$ and $U$ from above
\beq{equ:MCchain}
 \MC(\alg g^\alpha,0) \to \MC(\alg g^\alpha,\nu_r) \to \MC(M^{\alpha,m}).
\eeq
On each of these spaces the group $\Exp(\mF^1\alg g^\alpha_{0,\closed})$ acts.
We have the following result.
\begin{prop}\label{prop:equivariantMCmaps}
 The two maps of the chain \eqref{equ:MCchain} are equivariant with respect to the action of the group $\Exp(\mF^1\alg g^\alpha_{0,\closed})$.
\end{prop}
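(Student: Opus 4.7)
The plan is to first rewrite the two maps $W$ and $U$ in a form well adapted to the actions, and then verify equivariance in each case. Evaluating the formula of Proposition \ref{prop:Ltrivialexpl} on a single Maurer-Cartan element $\beta$, the symmetrization over permutations in $W_r$ collapses to one term and one obtains $W(\beta) = e_\beta$ in the notation of Lemma \ref{lem:distributive}. For $U$ it is convenient to set
\[
\tilde U(\gamma) := m + U(\gamma) = \sum_{r\geq 0}\tfrac{1}{r!}\, m\circ(\gamma,\dots,\gamma).
\]
Next, the action on $\MC(\alg g^\alpha[1],\nu_r)$ from \eqref{equ:gactiononMCg} can be written as $\gamma\cdot\exp(x) = \gamma + E_x\gamma + e_x$, and a direct rearrangement of the defining formula for the action on $\MC(M^{\alpha,m})$ gives $m + (m'\cdot\exp(x)) = (m+m') + E_x(m+m')$.

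For equivariance of the second map $U$, the plan is to apply Lemma \ref{lem:distributive}, equation \eqref{equ:temp9}, to $m$ with all $x_1 = \cdots = x_r = \gamma$, and to sum over $r\geq 0$ with weight $\tfrac{1}{r!}$. The left-hand side then collects to $E_x\tilde U(\gamma) + \tilde U(\gamma)$. The right-hand side becomes a double sum indexed by the number of $\gamma$-arguments and the number of $e_x$-arguments; using the graded symmetry of the operation $m\circ(-,\dots,-)$ and recombining via the binomial expansion, this double sum rearranges to $\sum_{k\geq 0}\tfrac{1}{k!}\, m\circ(\gamma+E_x\gamma+e_x,\dots,\gamma+E_x\gamma+e_x)$, which by definition equals $\tilde U(\gamma\cdot\exp(x))$. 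Comparison with the rewritten action on $\MC(M^{\alpha,m})$ then yields $U(\gamma\cdot\exp(x)) = U(\gamma)\cdot\exp(x)$.

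For the first map, equivariance of $W$ reduces, via the explicit formulas, to the pre-Lie identity
\[
 e_{\BCH(\beta,x)} = e_\beta + E_x(e_\beta) + e_x.
\]
Conceptually this is just the associativity relation $(0\cdot\exp(\beta))\cdot\exp(x) = 0\cdot\exp(\BCH(\beta,x))$ for the action on $\MC(\alg g^\alpha[1],\nu_r)$, evaluated at the basepoint $0$; as such it is implicit in Lemma \ref{lem:simplefreetransitive}. To prove the identity directly I would use an ODE argument in the spirit of the proof of Lemma \ref{lem:distributive}. Replacing $x$ by $tx$, both sides become curves in $t$ which agree at $t=0$. A direct manipulation of the defining series of $e_{tx}$ and $E_{tx}$ shows that the right-hand side satisfies the linear ODE $g'(t) = g(t)\bullet x + x$. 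The hard part will be checking the same ODE for the curve $t\mapsto e_{\BCH(\beta,tx)}$, since this requires combining the BCH derivative formula with a pre-Lie identity for the differential of the map $\gamma\mapsto e_\gamma$; such an identity can be verified as an operadic identity in the free pre-Lie algebra. Uniqueness of ODE solutions then yields the desired identity, and equivariance of $W$ follows immediately from the explicit formulas.
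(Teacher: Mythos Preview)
Your proposal is correct and follows essentially the same strategy as the paper: both identify $W(\beta)=e_\beta$, both handle the second map $U$ by applying Lemma~\ref{lem:distributive} (equation~\eqref{equ:temp9}) and recombining the resulting double sum, and both reduce equivariance of the first map to the single pre-Lie identity
\[
 e_{\BCH(\beta,x)} = e_\beta + E_x(e_\beta) + e_x.
\]

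The only noteworthy difference is in how this identity is justified. The paper observes that $e_\beta=\exp_\bullet(\beta)-1$ and invokes the (standard) fact that the pre-Lie exponential intertwines the BCH product with right multiplication in the group of formal flows, declaring the identity ``not hard to see.'' Your first argument---that the identity is exactly the group-action axiom $(0\cdot\exp(\beta))\cdot\exp(x)=0\cdot\exp(\BCH(\beta,x))$ evaluated at the basepoint $0\in\MC(\alg g^\alpha[1],\nu_r)$---is arguably cleaner and more self-contained, since it reduces the statement to something already asserted when the action was introduced. Your fallback ODE argument would also work but is unnecessary given the first observation; you are right that differentiating $t\mapsto e_{\BCH(\beta,tx)}$ directly is the laborious step, and the paper avoids it in the same way you do with your group-action remark.
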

\begin{proof}
The proof is a straightforward calculation, given that we provided explicit formulas for all maps and actions. 
For example, the first map sends a Maurer-Cartan element $\beta \in  \MC(\alg g^\alpha,0)$ to 
\[
 e_\beta \in \MC(\alg g^\alpha,\nu_r)
\]
using the notation of Lemma \ref{lem:distributive}.
Acting by $\exp(x)$ on the image we obtain 
\[
 e_\beta\cdot \exp(x) = \exp((-)\bullet x) e_\beta + e_x.
\]
Using the BCH formula and noting that $e_\beta=\exp_\bullet(\beta)-1$ it is not hard to see that 
\[
\exp((-)\bullet x) e_\beta = e_{\BCH(\beta,x)} - e_x,
\]
so that the first half of the Proposition follows.

The second map of the chain \eqref{equ:MCchain} (which is induced by $U$) sends a Maurer-Cartan element $\beta\in\MC(\alg g^\alpha,\nu_r)$ to 
\[
 \sum_{r\geq 1} \frac{1}{r!} m\circ (\beta,\dots,\beta ).
\]
Acting with $\exp(x)$ we obtain 
\[
(E_x+\mathit{id}) \left(\sum_{r\geq 1} \frac{1}{r!} m\circ (\beta,\dots,\beta )\right)
 +E_x m.
\]
By Lemma \ref{lem:distributive} this can be rewritten as
\begin{align*}
 &
 \sum_{r\geq 0}\sum_{j\geq 0} \frac{1}{r!j!}
 m\circ (E_x\beta+\beta,\dots,E_x\beta+\beta,  e_x,\dots,e_x) -m
 \\
 &=
 \sum_{r\geq 1}\frac{1}{r!} m\circ (E_x\beta+\beta+e_x,\dots,E_x\beta+\beta+ e_x),
\end{align*}
which in turn is precisely the image of 
\[
 \beta\cdot \exp(x) = E_x \beta + \beta + e_x
\]
under the second map of map of \eqref{equ:MCchain}.
\end{proof}

\subsection{Torsor property}
On the sets of Maurer-Cartan elements $\MC(\alg h)$ of a pro-nilpotent $L_\infty$-algebra $\alg h$ one has an action of the exponential group of the degree zero subalgebra $\Exp(\alg h_0)$ by gauge transformations.
Concretely, given an element $\beta \in \MC(\alg h)$ and $x\in \alg h_0$ the infinitesimal version of this gauge action is defined as
\[
 \beta \mapsto \beta \cdot x = d_{\beta} x := \sum_{r\geq 0} \frac{1}{r!} \mu_{r+1}(\beta,\dots,\beta,x).
\]
The action of the exponential group is obtained by integrating this formula. For example, in the case of $\alg h$ a dg Lie algebra one obtains 
\[
 \beta \mapsto \beta \cdot \gexp(x) = \frac{e^{\ad_x} -1 }{\ad_x} dx + e^{\ad_x}\beta,
\]
where we introduced the notation $\gexp(x)$ to denote the exponential in the gauge group, and $\ad_x(-):=[-,x]$.
In the yet more special case that $\alg h$ is abelian, the gauge axtion of $\gexp(x)$ is merely the addition of $dx$.
In general, denote the set of gauge equivalence classes of Maurer-Cartan elements by 
\[
 \oMC(\alg h).
\]

As a Corollary of Proposition \ref{prop:equivariantMCmaps} we now find the following result, which we will use below.

\begin{cor}\label{cor:torsors}
 Let $\alg h\subset \mF^1\alg g^h$ be a dg Lie subalgebra. Suppose that $N\subset M^{\alpha,m}$ is an $L_\infty$ subalgebra such that the restriction of $U\circ W$ to $\alg h$ is an $L_\infty$ morphism taking values in $N$
 \[
  V: (\alg h[1],0) \to N,
 \]
where the notation shall indicate that we regard the left-hand side as an abelian $L_\infty$-algebra.
Suppose furthermore that there are complete descending filtrations on $\alg h$ and $N$
\begin{align*}
M &= \mF^0M \supset \mF^1 M\supset \cdots
&
\alg g &= \mF^0\alg g \supset \mF^1 \alg g \supset \cdots
\end{align*}
compatible with all structures and $V$, such that $V$ induces a quasi-isomorphism on the associated graded complexes.
Then $V$ induces an isomorphism 
\[
 \oMC(\alg h[1],0) \to \oMC(N)
\]
and both sides are $\Exp(H^0(\alg h))$-torsors, with the map respecting the action.
\end{cor}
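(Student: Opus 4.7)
The plan is to combine a standard filtered-quasi-isomorphism theorem for Maurer-Cartan moduli with the explicit pre-Lie-based group actions of Section \ref{sec:GactionsonMC}, using Proposition \ref{prop:equivariantMCmaps} to tie the two together.

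First I would obtain the bijection $V_* \colon \oMC(\alg h[1],0) \to \oMC(N)$. Since $V$ is an $L_\infty$-morphism of complete filtered $L_\infty$-algebras that induces a quasi-isomorphism on the associated graded complexes, it is a filtered $L_\infty$-quasi-isomorphism. The classical principle of Goldman--Millson, extended to the $L_\infty$-setting by Dolgushev and others, then yields that such a morphism induces a bijection on the sets of gauge equivalence classes of Maurer-Cartan elements. This settles the first assertion of the corollary.

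Next I would identify the torsor structures. On the left side, since $\alg h[1]$ has trivial $L_\infty$-structure, the Maurer-Cartan set is $Z^0(\mF^1\alg h)$ and gauge equivalence reduces to shifting by $d$-exact elements, so $\oMC(\alg h[1],0) = H^0(\mF^1\alg h)$. By Lemma \ref{lem:simplefreetransitive0}, the BCH-action of $\Exp(\mF^1\alg h_{0,\closed})$ on $\MC(\alg h[1],0)$ defined in Section \ref{sec:GactionsonMC} is free and transitive. A routine check (using that $\alg h$ is a sub-dg-Lie algebra and that Lie brackets with closed elements are cocycles up to exact terms) shows this action descends to a free and transitive action of $\Exp(H^0(\alg h))$ on $\oMC(\alg h[1],0)$. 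For the right side, the analogous action of $\Exp(\mF^1\alg h_{0,\closed})$ on $\MC(M^{\alpha,m})$ from Section \ref{sec:GactionsonMC} restricts to $\MC(N)$ under the hypothesis that $V$ lands in $N$; descent to $\oMC(N)$ follows by comparing the pre-Lie infinitesimal action $\beta\cdot x = (m+\beta)\bullet x$ with the $L_\infty$-gauge action generated by $U_1(x)=\alpha\circ x\in N$. Proposition \ref{prop:equivariantMCmaps} gives the equivariance of $V_*$ at the level of Maurer-Cartan sets, hence at the level of $\oMC$. Freeness and transitivity of the action on $\oMC(N)$ then follow formally from the corresponding properties on the left-hand side together with the bijectivity of $V_*$.

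The main obstacle is the descent step on the $N$-side: one must verify that the pre-Lie group action of Section \ref{sec:GactionsonMC} on $\MC(M^{\alpha,m})$ differs from the usual $L_\infty$-gauge action of $N$ by a gauge transformation, so that it is well-defined on $\oMC(N)$ and that exact elements of $\alg h_{0,\closed}$ act trivially. This is a slightly tedious but essentially formal calculation driven by the $L_\infty$-morphism relations for $V$, namely that the infinitesimal action of an element $x \in \alg h_0$ on $\beta\in\MC(N)$ equals the gauge action by $U_1(x)$ plus correction terms which vanish modulo gauge when $x$ is $d$-exact. Once this is in place, the two torsor structures match by equivariance, completing the proof.
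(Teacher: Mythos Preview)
Your proposal follows the same overall strategy as the paper: invoke the filtered Goldman--Millson theorem for the bijection $\oMC(\alg h[1],0)\to\oMC(N)$, use Proposition~\ref{prop:equivariantMCmaps} for equivariance, observe that the left-hand side is manifestly an $\Exp(H^0(\alg h))$-torsor, and transfer the torsor property along the equivariant bijection. The paper's proof is considerably more terse---essentially three sentences---and does not spell out the descent issues you raise.

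Your identification of the ``main obstacle'' is apt and is exactly what the paper sweeps under the rug. One point to be careful about: you assert that the pre-Lie action of $\Exp(\mF^1\alg h_{0,\closed})$ on $\MC(M^{\alpha,m})$ restricts to $\MC(N)$ simply because $V$ lands in $N$. This is not immediate: the action $m'\mapsto m'+\sum_j\frac{1}{j!}(\cdots((m'+m)\bullet x)\cdots\bullet x)$ involves the ambient pre-Lie module structure, and $N$ is only assumed to be an $L_\infty$-subalgebra. What equivariance (Proposition~\ref{prop:equivariantMCmaps}) directly gives you is that the action preserves the \emph{image} of $V_*$ inside $\MC(M^{\alpha,m})$. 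The cleanest way to proceed is therefore to transport the torsor structure along the bijection $V_*$ at the level of $\oMC$, and then use equivariance to identify this transported action with (the gauge-class of) the ambient pre-Lie action wherever the latter makes sense. This sidesteps the question of whether $\MC(N)$ itself is stable under the pre-Lie action, and is consistent with how the paper applies the corollary downstream.
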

\begin{proof}
 By the Goldmann-Millson Theorem \cite{DolRog} the map $V$ induces an isomorphism $\oMC(\alg h[1],0) \to \oMC(N)$. Furthermore, by Proposition \ref{prop:equivariantMCmaps} this isomorphism respects the action of $\Exp(H^0(\alg h))$. Hence one side is a torsor iff the other is as well. But it is clear that $\oMC(\alg h[1],0)\cong H^0(\alg h)$ is an $\Exp(H^0(\alg h))$-torsor.
\end{proof}

\section{Graph operads and graph complexes}
We will use the graph complexes $\GC_n$ and the graph operads $\Graphs_n$ both originally defined by M. Kontsevich \cite{K2,Kformal}. For the versions we use here we refer the reader to recollection of \cite[section 7]{FTW}.
Concretely, elements of $\GC_n$ are (possibly infinite) $\K$-linear combinations of isomorphism classes or undirected, connected graphs with at least trivalent vertices, for example
\[
    \begin{tikzpicture}[baseline=-.65ex, scale=.5]
\node[int] (c) at (0,0){};
\node[int] (v1) at (0:1) {};
\node[int] (v2) at (72:1) {};
\node[int] (v3) at (144:1) {};
\node[int] (v4) at (216:1) {};
\node[int] (v5) at (-72:1) {};
\draw (v1) edge (v2) edge (v5) (v3) edge (v2) edge (v4) (v4) edge (v5)
      (c) edge (v1) edge (v2) edge (v3) edge (v4) (c) edge (v5);
\end{tikzpicture}
,
\quad\quad
    \begin{tikzpicture}[baseline=-.65ex, scale=.5]
\node[int] (c) at (0.7,0){};
\node[int] (v1) at (0,-1) {};
\node[int] (v2) at (0,1) {};
\node[int] (v3) at (2.1,-1) {};
\node[int] (v4) at (2.1,1) {};
\node[int] (d) at (1.4,0) {};
\draw (v1) edge (v2) edge (v3)  edge (d) edge (c) (v2) edge (v4) edge (c) (v4) edge (d) edge (v3) (v3) edge (d) (c) edge (d);
\end{tikzpicture}.
\]
If we also allow bivalent vertices we obtain a similar complex $\GC_n^2\supset \GC_n$.
The important fact for us is that the graph complexes $\GC_n^2$ and $\GC_n$ carry a pre-Lie algebra structure.
The pre-Lie product $\Gamma\bullet \Gamma'$ is obtained by summing over all ways of inserting $\Gamma'$ into a vertex of $\Gamma$ and reconnecting the "dangling" edges.

The $r$-ary operations of the graph operads $\Graphs_n$ are (possibly infinite) linear combinations of graphs with $r$ numbered ("external") and an arbitrary (but finite) number of unidentifiable ("internal") vertices, e.g., 
\[
  \begin{tikzpicture}[baseline=-.65ex, scale=.5]
\node[int] (c) at (0.7,0){};
\node[ext] (v1) at (0,-1) {$\scriptstyle 1$};
\node[ext] (v2) at (0,1) {$\scriptstyle 2$};
\node[ext] (v3) at (2.1,-1) {$\scriptstyle 3$};
\node[ext] (v4) at (2.1,1) {$\scriptstyle 4$};
\node[int] (d) at (1.4,0) {};
\draw (v1) edge (v2) edge (v3)  edge (d) edge (c) (v2) edge (v4) edge (c) (v4) edge (d) edge (v3) (v3) edge (d) (c) edge (d);
\end{tikzpicture}.
\]
The operadic composition is defined by insertion of a graph into an external vertex of another.
The important point for us is that the graded operad $\Graphs_n$ (i.e., with zero differential) is a pre-Lie module over the graded pre-Lie algebra $\GC_n$. 
For graphs $\Gamma\in \Graphs_n$, $\gamma\in \GC_n$ 
the right action $\Gamma\bullet \gamma$ is defined by summing over all ways of inserting $\gamma$ into an internal vertex of $\Gamma$ and reconnecting the dangling edges. We refer to \cite{DolWill} for the definition of this operation in the proper generality.

\subsection{The hairy graph complexes}\label{sec:HGC}
It is well known that the total invariant space of any dg operad carries a natural dg Lie algebra structure.
We apply this to the the degree shifted operad $\Graphs_n\{m\}$ to conclude that the space
\[
\prod_{r\geq 1} \Graphs_n\{m\}^{\bbS_r}
\]
is a dg Lie algebra. Consider the subspace
\[
\fHGC_{m,n}\subset \prod_{r\geq 1} \Graphs_n\{m\}^{\bbS_r}
\]
consisting of graphs all of whose external vertices have valence exactly $1$.
One can check that $\fHGC_{m,n}$ is in fact a dg Lie subalgebra, which we call the full hairy graph complex.
Pictorially, elements of $\fHGC_{m,n}$ are (possibly infinite) linear combinations of graphs with hairs, as depicted in \eqref{equ:hairysample}. In those pictures, the univalent external vertices sitting at the ends of the graphs are not drawn.

Now, from the right action of $\GC_n$ on $\Graphs_n$ we obtain a right action of the graded pre-Lie algebra $\GC_n$ on the graded Lie algebra $\HGC_{m,n}$.

Similarly, in the above definitions we may allow bivalent internal vertices and obtain a version of the hairy graph complex $\HGC_{m,n}^2$, or we even allow univalent internal vertices to obtain a version of the hairy graph complex $\HGC_{m,n}^1$. We obtain a right pre-Lie action of $\GC_n^2$ on $\HGC_{m,n}^2$, and a right pre-Lie action of $\GC_n^1$ on $\HGC_{m,n}^1$, compatible with the graded Lie algebra structure.

Summarizing, we are in the following situation, which is a special case of the twisting construction from section \ref{sec:twistingmodules}.
\begin{itemize}
\item We have a graded pre-Lie algebra $\GC_n^1$ together with a Maurer-Cartan element 
\beq{equ:MCalpha}
\alpha = 
\begin{tikzpicture}[baseline=-.65ex]
\draw (0,0) node[int] {} -- (.5,0) node[int] {};
\end{tikzpicture}.
\eeq
Twisting by this Maurer-Cartan element we obtain the graph complex $\GC_n^1$.
\item We have a graded Lie algebra $\HGC_{m,n}^1$ together with a compatible right $\GC_n^1$-action.
\item We have an element 
\beq{equ:smallm}
m
=
\begin{tikzpicture}[baseline=-.65ex]
\node[int] (v) at (0,.3){};
\draw (v) edge (0,-.3);
\end{tikzpicture}\,
\in \HGC_{m,n}^1, 
\eeq
which is a Maurer-Cartan element in the $\alpha$-twisted dg Lie algebra $\HGC_{m,n}^1$.
\item The hairy graph complex with the "correct" differential is obtained by twisting by $\alpha$ and $m$, i.e., for $\Gamma\in \HGC_{m,n}^1$ we have
\[
d \Gamma = [m,\Gamma] + (-1)^{|\Gamma|} \Gamma \bullet \alpha.
\]
\end{itemize}

The operations above are such that the sub-graded vector spaces $\HGC_{m,n}\subset \HGC_{m,n}^2\subset \HGC_{m,n}^1$ are preserved by the differential, although $m$ is not an element in the smaller complexes. (All terms in $d$ producing terms of valence 1 or 2 cancel as long as no such vertices were present before, as a small calculation shows.)

\subsection{Recollection from \cite{WillDefQ}}
We recall from \cite{WillDefQ} that the hairy graph complexes $\HGC_{1,n}$ are naturally equipped with a nontrivial $L_\infty$-structure, called in loc. cit. the Shoikhet $L_\infty$-structure after \cite{Shoikhet}.
Let us briefly recall how this $L_\infty$-structure is constructed.
\begin{itemize}
\item There is an $L_\infty$-structure on the graded space $\HGC_{m,n}^1$, compatible with the right $\GC_n^1$ action.
We call this structure the \emph{pre-Shoikhet} $L_\infty$-structure.
\item The Maurer-Cartan element $m$ from \eqref{equ:smallm} above is also a Maurer-Cartan element with respect to the pre-Shoikhet $L_\infty$-structure, twisted by $\alpha$.
\item Twisting by $m$ and $\alpha$ recovers an $L_\infty$-structure (the \emph{Shoikhet}-$L_\infty$-structure) whose differential is the standard differential on the hairy graph complex $\HGC_{1,n}^1$.
\item The $L_\infty$-structure restricts to the subspaces $\HGC_{1,n}\subset \HGC_{1,n}^2\subset \HGC_{1,n}^1$.
\item The $L_\infty$-structure respects the complete grading by the number of edges minus the number of internal vertices of graphs.
\end{itemize}

Again, the $L_\infty$ structure on $\HGC_{1,n}^1$ is produced by twisting the $L_\infty$ structure on the underlying graded space by $\alpha$ and $m$ (as in section \ref{sec:twistingmodules}), and in particular Theorem \ref{thm:main_alg} is applicable.

\section{Proofs of the main Theorem \ref{thm:main}} 
We now apply Theorem \ref{thm:main_alg2} to the following special cases:
\begin{itemize}
\item $\fg=\GC_n^1$.
\item $M=\HGC_{m,n}^{1,\circ}$, where the $\circ$ shall mean that we disregard the differential.
\item The MC element $\alpha\in \fg$ is always \eqref{equ:MCalpha}.
\item The $L_\infty$-structure on $M$ is either the canonical graded Lie algebra structure, or the pre-Shoikhet structure (if $m=1$, $n=2$).
\item The Maurer-Cartan element $m$ is \eqref{equ:smallm}, plus one of $L,T,T'$ occurring in the statement of Theorem \ref{thm:main_alg2}.
\item The derivation $D$ is defined on $\HGC_{m,n}^1$ as the grading generator wrt. the grading by number of edges minus number of internal vertices, and on $\GC_n^1$ the operation $D$ is defined to be the generator of the loop order grading.
\end{itemize}
Note that by these choices the twisted $L_\infty$-structure on $M^{\alpha,m}$ is in all three cases "the correct" one, i.e., either the canonical dg Lie algebra structure or the Shoikhet $L_\infty$-structure.

Now, we obtain a $\hoLie_2$-structure on $\GC_n^1$ which is in all cases a dg Lie algebra structure, whose differential is the canonical one, and whose bracket is given by connecting two graphs by one edge.
Applying Theorem \ref{thm:main_alg2} we also obtain an $L_\infty$-morphism
\[
D\K[1] \oplus (\GC_n^1[1],\nu) \to \HGC_{m,n}^1.
\]
Furthermore, by looking at the definition of both the $L_\infty$-structures and the $L_\infty$-morphism one sees that the operations cannot create univalent vertices if there were none before.
Hence all structure restricts to the $\geq $bivalent pieces, and in particular there is an $L_\infty$-morphism
\[
D\K[1] \oplus (\GC_n^2[1],\nu) \to \HGC_{m,n}^2.
\]
Next we precompose $U$ with the $L_\infty$ morphism trivializing the Lie bracket on $\GC_n^2$, whose existence is asserted in Proposition \ref{prop:Ltrivial} to obtain the desired $L_\infty$-morphism
\[
 D\K[1] \oplus (\GC_n^2[1],0) \to \HGC_{m,n}^2.
\]
To finish the proof, one merely notes that the leading order piece of the $L_\infty$-morphism agrees with the maps of Theorems \ref{thm:pre1}-\ref{thm:pre3} in each case, and hence it follows that the above maps are $L_\infty$-quasi-isomorphisms.
\hfill\qed
  
\begin{rem}
 We stated our main Theorem \ref{thm:main} for the special Maurer-Cartan elements $L$, $T$ and $T'$. However, it is clear from the proof that the Theorem holds for more general Maurer-Cartan elements, as long as the coefficient of the line graph ($L$) is nonzero for $\HGC_{n,n}$, or as long as the coefficient of the tripod graph is nonzero for $\HGC_{n-1,n}$ and $\HGC_{1,2}'$.
\end{rem}
  
\section{Maurer-Cartan elements}
In this section we collect several observations about the sets of Maurer-Cartan elements in the hairy graph complexes, that can be derived from methods of this paper.

\subsection{Maurer-Cartan elements in \texorpdfstring{$\HGC_{n,n}$}{HGCnn} and \texorpdfstring{$\HGC_{n-1,n}$}{HGCn1n}}
The dg Lie algebra $\HGC_{n,n}$ (for $n\geq 2$) contains a very simple family of Maurer-Cartan elements, namely the graphs 
\[
\lambda\,
 \begin{tikzpicture}[baseline=-.65ex]
  \draw (0,0)--(1,0);
 \end{tikzpicture}
\]
for $\lambda\in \K$.

Similarly, one can verify that for each $\lambda \in \K$ the following series is a Maurer-Cartan element in $\HGC_{n-1,n}$.
\beq{equ:tripodMCla}
 \sum_{k\geq 1} \lambda^k 
 \underbrace{
 \begin{tikzpicture}[baseline=-.65ex]
\node[int] (v) at (0,0) {};
\draw (v) edge +(-.7,-.5)  
edge +(-.4,-.5) edge +(.4, -.5) 
edge +(.7,-.5);
\node at (0,-.3) {$\scriptstyle \cdots$};
\end{tikzpicture}
}_{2k+1 \times}
\eeq
In particular there are Maurer-Cartan elements whose coefficient in front of the tripod graph is any number in $\K$.
Most of the remainder of this section is devoted to showing the analogous statement for $\HGC_{1,2}'$, equipped with the Shoikhet $L_\infty$-structure, where it is significantly harder.
The (informal) reason for this difficulty is that Maurer-Cartan elements in $\HGC_{1,2}'$ with non-vanishing coefficient in front of the tripod graph are essentially equivalent data to a Drinfeld associator, and such objects are notoriously hard to construct.
In particular, all known construction need to use non-algebraic (transcendental) methods at some point.

\subsection{Existence of real Maurer-Cartan elements in \texorpdfstring{$\HGC_{1,2}'$}{HGC12'}} 
The existence of rational Maurer-Cartan elements in $\HGC_{1,2}'$ follows from results of \cite{FTW}. However, for clarity and self-containedness of the exposition let us sketch in this subsection and the next how such Maurer-Cartan elements may be constructed directly.

We will begin by constructing a Maurer-Cartan element in $\HGC_{1,2}'$ over $\K=\R$.
To this end we will first recall from \cite{TW} the construction of a canonical Maurer-Cartan element in the convolution dg Lie algebra
\[
 \Conv(\Ass^\vee, \Graphs_2).
\]
Concretely, such Maurer-Cartan elements are given by operad maps 
\[
 \Ass_\infty = \Omega(\Ass^\vee) \to \Graphs_2.
\]
There is a canonical such map given by the composition
\beq{equ:temp10}
 \Ass_\infty \to C(\FM_1) \to C(\FM_2) \to \Graphs_2.
\eeq
Here $\FM_d$ is the Fulton-MacPherson-Axelrod-Singer compactification of the configurationspace of points in $\R^d$ \cite{GJ}, and $C(\FM_d)$ denotes the operad of semi-algebraic chains thereof \cite{HLTV, LVformal}.
The first map in the chain \eqref{equ:temp10} sends the $r$-ary generating $A_\infty$ operation to the fundamental chain $F_r$ of the connected component of $\FM_1$ for which the points are in ascending order.
The second map in \eqref{equ:temp10} is induced by the natural map $\FM_1\to \FM_2$ obtained from the standard embedding $\R\to \R^2$. Finally, the third map in \eqref{equ:temp10} is defined by Kontsevich. It sends a chain $c\in C(\FM_2(r))$ to the series of graphs 
\[
 \sum_\Gamma \Gamma \int_c \omega_\Gamma
\]
where the sum is over graphs forming a basis of $\Graphs_2(r)$, and $\omega_\Gamma$ is a (PA- \cite{HLTV}) differential form on naturally associated to $\Gamma$, see \cite{K2,LVformal,TW} for more details.

Let us however isolate some explicit formula for the Maurer-Cartan element in $\Conv(\Ass^\vee, \Graphs_2)$. Concretely, we may identify (as dg graded vector space)
\[
 \Conv(\Ass^\vee, \Graphs_2)
 \cong 
 \prod_{r\geq 1} \Ass\{1\}(r) \otimes_{S_r} \Graphs_2(r)
 \cong 
 \prod_{r\geq 1} \Graphs_2(r)[1-r].
\]
The desired Maurer-Cartan element is then 
\beq{equ:MCintegral}
\mu = \sum_r \sum_\Gamma \Gamma \int_{F_r} \omega_\Gamma,
\eeq
where $F_r\in C(\FM_1(r))\subset C(\FM_2(r))$ is the fundamental chain of the connected component of $\FM_1(r)$ in which the $r$ points are in ascending order on the line.

The link to the hairy graph complexes is that (as shown in \cite{WillDefQ}) there is an $L_\infty$-quasi-morphism
\beq{equ:temp11}
U_{\rm Sh} \colon \fHGC_{1,2}' \to \Conv(\Ass^\vee, \Graphs_2)
\eeq
from the full (disconnected) hairy graph complex $\fHGC_{1,2}$ equipped with the Shoikhet $L_\infty$ struture. By the Goldman-Millson Theorem \cite{DolRog} we can hence transfer our Maurer-Cartan element to a Maurer-Cartan element in $\fHGC_{1,2}'$.

We next claim that this MC element lives (or rather: can be chosen to live) in the connected sub-$L_\infty$-algebra $\HGC_{1,2}'\subset \fHGC_{1,2}'$.
To this end we first define a ``connected'' sub-dg Lie algebra 
\[
 \Conv(\Ass^\vee,\Graphs_2)_\conn \subset \Conv(\Ass^\vee,\Graphs_2),
\]
following the lines of analogous definitions in \cite{WillSCModel}.
To this end we identify the collection $\Ass$ with the collection $\Poiss_1$.
We note explicitly that $\Ass\cong \Poiss$ as collections, but not as operads.
Now we can identify (as dg vector space) 
\[
\Conv(\Ass^\vee,\Graphs_2)
\cong 
 \prod_{r\geq 1} \Poiss_1\{1\}(r) \otimes_{S_r} \Graphs_2(r).
\]
Now, using that $\Poiss_1=\Com\circ \Lie$ we may depict elements of this space as graphs with internal and external vertices, whose external vertices may in addition be attached to Lie trees, as the following exemplary picture indicates.
\[
 \begin{tikzpicture}
  \node[ext] (v1) at (0,0) {};
  \node[ext] (v2) at (0.5,0) {};
  \node[ext] (v3) at (1,0) {};
  \node[ext] (v4) at (1.5,0) {};
  \node[ext] (v5) at (2,0) {};
  \node[int] (i1) at (0.5,.5) {};
  \coordinate (t1) at (0,-.5);
  \coordinate (t2) at (0.75,-.5);
  \coordinate (t3) at (1.25,-1);
  \coordinate (t4) at (1.25,-1.5);
 \coordinate (t5) at (2,-.5);
   \draw (i1) edge (v1) edge (v2) edge (v3) 
             (v4) edge[bend left] (v5);
   \draw[dotted] (t1) edge (v1) (t2) edge (v2) edge (v3) edge (t3) (t3) edge (t4) edge (v4) (t5) edge (v5); 
   \node (s1) at (4.8,.5) {$\Graphs_2(5)$ element};
   \node (s2) at (4.2,-.7) {Lie trees};
   \draw[-latex] (s1) edge (2.1,.5) (s2) edge (2.1,-.7);
 \end{tikzpicture}
\]
We now define 
\[
\Conv(\Ass^\vee,\Graphs_2)_\conn \subset \Conv(\Ass^\vee,\Graphs_2) 
\]
as the subspace spanned by connected such graphs, where we consider external vertices connected to the same Lie tree as in the same connected component. For example, the example graph shown in the picture above is connected. 
One may verify three statements:
\begin{itemize}
 \item $\Conv(\Ass^\vee,\Graphs_2)_\conn \subset \Conv(\Ass^\vee,\Graphs_2) $ is a sub-dg Lie algebra.
 \item The Maurer-Cartan element $\mu$ from above is contained in $\Conv(\Ass^\vee,\Graphs_2)_\conn$. (The proof is similar to \cite[Proposition 6]{WillSCModel}.)
 \item The restriction of the ``Shoikhet formality morphism'' $U_{Sh}$ of \eqref{equ:temp11} to $\HGC_{1,2}'\subset \fHGC_{1,2}'$ takes values in the connected component $\Conv(\Ass^\vee,\Graphs_2)_\conn$. This restriction is an $L_\infty$-quasi-isomorphism $\HGC_{1,2}'\to \Conv(\Ass^\vee,\Graphs_2)_\conn$.
 (Here one uses that the Shoikhet formality morphism is connected in the sense of \cite[Definition 9]{WillSCModel}.)
\end{itemize}

Given these verifications it follows again by the Goldmann-Millson Theorem that the Maurer-Cartan element $\mu\in \Conv(\Ass^\vee,\Graphs_2)_\conn$ can be transferred to a Maurer-Cartan element in $\HGC_{1,2}'$. Furthermore, by explicit evaluation of the integral (done in \cite{TW}) one finds that the coefficient of the tripod graph is $\frac 1 4$. 
 
We arrive at the following conclusion.

\begin{prop}
 Over $\K=\R$ a Maurer-Cartan element of $\HGC_{1,2}'$ exists.
\end{prop}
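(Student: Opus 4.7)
The plan is to assemble the Maurer-Cartan element by transferring the Kontsevich-type element $\mu$ of \eqref{equ:MCintegral} across the restricted Shoikhet formality morphism. First I would justify that the explicit formula \eqref{equ:MCintegral} really does define a Maurer-Cartan element of $\Conv(\Ass^\vee,\Graphs_2)$: this reduces to the statement that the composition $\Ass_\infty \to C(\FM_1) \to C(\FM_2) \to \Graphs_2$ of \eqref{equ:temp10} is a map of operads, which is standard (the first arrow uses the ordered fundamental chains $F_r$, the second is induced by the inclusion $\R \hookrightarrow \R^2$, and the third is Kontsevich's graphical integration map). Working over $\K=\R$ is essential at this step, since the coefficients are defined as real integrals of Kontsevich-type PA-forms $\omega_\Gamma$ over $F_r$.

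Second, I would invoke the three verifications enumerated just above the proposition: that $\Conv(\Ass^\vee,\Graphs_2)_\conn$ is a dg Lie subalgebra, that $\mu$ lies in it, and that the restriction of $U_{\rm Sh}$ to $\HGC_{1,2}' \subset \fHGC_{1,2}'$ is an $L_\infty$-quasi-isomorphism
\[
U_{\rm Sh} \colon \HGC_{1,2}' \to \Conv(\Ass^\vee,\Graphs_2)_\conn.
\]
With these in hand, the Goldman-Millson Theorem \cite{DolRog} produces a bijection on gauge equivalence classes of Maurer-Cartan elements. Pulling the class of $\mu$ back through this bijection yields a Maurer-Cartan element $m_{\rm Sh}\in \HGC_{1,2}'$, as desired.

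The hard part, which is really already absorbed into the three bulleted verifications, is controlling the ``connectedness'' structure on both sides: one must check that no disconnected graphs can contribute a nonzero integral in $\mu$, and that the component operations of $U_{\rm Sh}$ do not introduce disconnected output when fed connected inputs. I would model these arguments directly on the analogous analyses in \cite{WillSCModel}; in particular, adapting \cite[Proposition 6]{WillSCModel} handles the connectedness of $\mu$, and the notion of connected formality morphism from \cite[Definition 9]{WillSCModel} handles $U_{\rm Sh}$. Once these are in place, the content of the proposition itself is a direct application of Goldman-Millson, and the coefficient of the tripod graph in the transferred element $m_{\rm Sh}$ comes out to be $\tfrac14$ by the explicit evaluation performed in \cite{TW}, so in particular $m_{\rm Sh}\neq 0$.
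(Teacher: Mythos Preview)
Your proposal is correct and follows essentially the same route as the paper: construct $\mu$ via the Kontsevich integral formula \eqref{equ:MCintegral}, verify it lands in the connected sub-dg Lie algebra $\Conv(\Ass^\vee,\Graphs_2)_\conn$, check that the restricted Shoikhet morphism $U_{\rm Sh}\colon \HGC_{1,2}'\to \Conv(\Ass^\vee,\Graphs_2)_\conn$ is an $L_\infty$-quasi-isomorphism, and then transfer $\mu$ back via Goldman--Millson. Your identification of the three bulleted verifications (with the references to \cite{WillSCModel}) as the substantive work is exactly right.
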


Furthermore, we make the following observations. FIrst, $\Graphs_2$ has a natural grading by the number 
\[
 \#(\text{edges})- \#(\text{internal vertices}).
\]
It induces a grading on $\Conv(\Ass^\vee,\Graphs_2)_\conn$. We call the grading generator by $G$. We call a Maurer-Cartan element $\nu$ \emph{even} if 
\[
(-1)^G \nu = \nu.
\]

\begin{lemma}\label{lem:evenMC}
The Maurer-Cartan element $\mu\in\Conv(\Ass^\vee,\Graphs_2)_\conn$ is even, i.e.,
\[
(-1)^G \mu = \mu.
\]
\end{lemma}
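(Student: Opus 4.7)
The plan is to reduce the evenness condition to a parity statement about the integral coefficients of $\mu$, and to establish the vanishing of these integrals by a symmetry argument using complex conjugation. Writing out the definition as $\mu = \sum_r \sum_\Gamma \Gamma \int_{F_r} \omega_\Gamma$, the condition $(-1)^G \mu = \mu$ is equivalent to the assertion that, for every graph $\Gamma$ with $e$ edges and $k$ internal vertices, the integral $\int_{F_r} \omega_\Gamma$ vanishes whenever $e-k$ is odd. I interpret this integral as $\int_C \omega_\Gamma$, where $C := \pi^{-1}(F_r) \subset \FM_2(r+k)$ is the preimage of $F_r$ under the forgetful map $\pi\colon \FM_2(r+k) \to \FM_2(r)$. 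The key tool will then be the involution $\tau\colon \FM_2 \to \FM_2$ induced by complex conjugation $z \mapsto \bar z$ on $\C = \R^2$.

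First, since $\tau$ fixes the real line $\R \subset \C$ pointwise, it fixes the external vertices of configurations in $C$ (which lie on $\R$ in ascending order) and therefore restricts to a diffeomorphism of $C$ that acts only by reflecting the $y$-coordinates of the $k$ internal vertices. After a standard gauge-fixing to eliminate translations and scalings, the Jacobian of $\tau$ on $C$ is therefore $(-1)^k$. Second, each propagator $\omega_{ij} = \frac{1}{2\pi}d\arg(z_i - z_j)$ transforms under $\tau$ as $\tau^*\omega_{ij} = -\omega_{ij}$, because $\arg(\overline{z_i - z_j}) = -\arg(z_i - z_j)$; consequently $\tau^*\omega_\Gamma = (-1)^e \omega_\Gamma$.

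Combining these two observations via the change-of-variables formula for the orientation-reversing (when $k$ is odd) diffeomorphism $\tau\colon C \to C$ produces the identity
\[
\int_C \omega_\Gamma \;=\; (-1)^k \int_C \tau^* \omega_\Gamma \;=\; (-1)^{e+k} \int_C \omega_\Gamma,
\]
which forces the integral to vanish whenever $e+k$, equivalently $e-k$, is odd. This is exactly the desired evenness statement. The only points requiring genuine care, though essentially routine, are the orientation bookkeeping on the semi-algebraic compactification $\FM_2(r+k)$ and the justification of the change-of-variables formula for semi-algebraic chains with boundary. These are not serious obstacles: the boundary strata have lower dimension and contribute measure zero, and $\tau$ extends to an involution of the whole compactification which permutes the strata in a manner compatible with the above calculation.
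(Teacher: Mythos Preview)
Your proof is correct and follows essentially the same approach as the paper's own argument: both use the reflection of the plane at the real axis (i.e., complex conjugation) to show that the Kontsevich weight integral $\int_{F_r}\omega_\Gamma$ vanishes whenever the $G$-degree $e-k$ is odd, by comparing the sign $(-1)^e$ picked up by the product of propagators against the sign $(-1)^k$ from the orientation of the integration domain. Your version spells out the orientation bookkeeping on $C=\pi^{-1}(F_r)$ more carefully than the paper, which states the conclusion in a single sentence.
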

\begin{proof}
One has to check that odd graded graphs $\Gamma$ are assigned zero weight in \eqref{equ:MCintegral}, i.e.,
\[
\int_{F_r} \omega_\Gamma = 0.
\]
But this follows from a simple reflection argument:Reflecting the plane at the $x$-axis changes $\omega_\Gamma$ to $(-1)^{ \#(\text{edges})}\omega_\Gamma$ and changes the orientation of the configuration space by $(-1)^{ \#(\text{vertices})}$. Hence if the degree of $\Gamma$ is odd, the integral vanishes.
\end{proof}

\begin{cor}\label{cor:anylambdaMC}
There are Maurer-Cartan elements of $\Conv(\Ass^\vee,\Graphs_2)_\conn$ and $\HGC_{1,2}'$ whose tripod coefficient is any real number. 
\end{cor}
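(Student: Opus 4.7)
The plan is to use the existing Maurer-Cartan element $\mu\in\Conv(\Ass^\vee,\Graphs_2)_\conn$ from the previous proposition (whose tripod coefficient is $\tfrac14$), apply a real one-parameter family of dg Lie algebra automorphisms to freely rescale the tripod coefficient, and transfer the resulting MC elements to $\HGC_{1,2}'$ via Goldman--Millson. The role of the preparatory Lemma \ref{lem:evenMC} is to allow a real \emph{square root} of the natural $\mathbb{G}_m$-rescaling.

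First I would observe that the grading $G = \#(\text{edges}) - \#(\text{internal vertices})$ is preserved both by the internal $\Graphs_2$-differential (vertex splitting adds exactly one edge and one internal vertex) and by the convolution bracket (which is additive in both counts). Hence the even-$G$ part $\Conv(\Ass^\vee,\Graphs_2)_\conn^{\mathrm{ev}}$ is a dg Lie subalgebra, and by Lemma \ref{lem:evenMC} it already contains the MC element $\mu$. Writing $\mu = \sum_k \mu_{2k}$ by $G$-weight and setting, for $\lambda\in\R$,
\[
 A_\lambda(x_{2k}) := \lambda^k\, x_{2k}, \qquad\qquad \mu_\lambda := A_\lambda\mu = \sum_k \lambda^k\, \mu_{2k},
\]
one checks that $A_\lambda$ is a dg Lie algebra automorphism of $\Conv(\Ass^\vee,\Graphs_2)_\conn^{\mathrm{ev}}$: it commutes with $d$ because $d$ preserves $G$, and it is an algebra homomorphism because the bracket is $G$-additive. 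Crucially, only integer powers of $\lambda$ appear---this is precisely what evenness buys us---so $A_\lambda$ is well-defined for every real $\lambda$, including negative ones, without taking any square root. Consequently $\mu_\lambda$ is Maurer-Cartan. Since the tripod has $G$-weight $2$ (i.e.\ sits in the $k=1$ summand), its coefficient in $\mu_\lambda$ is $\lambda/4$, which realizes every real number as $\lambda$ varies.

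To finish, I would transfer the family $(\mu_\lambda)_{\lambda\in\R}$ to $\HGC_{1,2}'$ using the restriction
\[
 U_{\rm Sh}\big|_{\HGC_{1,2}'} \colon \HGC_{1,2}' \longrightarrow \Conv(\Ass^\vee,\Graphs_2)_\conn
\]
that was established to be an $L_\infty$-quasi-isomorphism in the preceding subsection. The Goldman--Millson theorem then yields a bijection of gauge equivalence classes of Maurer-Cartan elements, and an inspection of the linear part of $U_{\rm Sh}$ (which sends a hairy graph to the natural ``externalization'' of its hairs) shows that the tripod in $\HGC_{1,2}'$ corresponds to a nonzero universal multiple of the tripod in the convolution complex; this multiplicative constant is absorbed by further reparametrizing $\lambda$.

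The main obstacle I anticipate is bookkeeping: checking carefully that $G$ really is preserved by \emph{every} piece of the convolution differential and bracket (including all signs coming from the cooperad structure of $\Ass^\vee$), and verifying that the tripod-to-tripod linear coefficient of $U_{\rm Sh}$ is indeed nonzero. Both should follow from the explicit formulas underlying the Shoikhet construction, but they are where one must be most careful.
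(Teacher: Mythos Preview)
Your proposal is correct and follows essentially the same route as the paper: rescale $\mu$ by the $G$-grading automorphism, invoke Lemma~\ref{lem:evenMC} so that only integer powers of the parameter appear (equivalently, so that $(\sqrt{\lambda})^G$ makes sense over~$\R$), and then transfer along the $L_\infty$-quasi-isomorphism $U_{\rm Sh}$. The paper's proof is terser and does not spell out the tripod-to-tripod bookkeeping under transfer that you flag as a potential obstacle, but the strategy is identical.
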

\begin{proof}
We note that the tripod coefficient in $\mu$ is $\frac 1 4$ as computed in \cite{TW}. Given any $\lambda\in \R$, rescaling by the grading generator produces a Maurer-Cartan element
\[
(\sqrt{\lambda})^G\mu
\]
produces a Maurer-Cartan element in $\Conv(\Ass^\vee,\Graphs_2)_\conn$ whose tripod coefficient is $\lambda$. Note that $\sqrt{\lambda}$ might not be in $\R$. However, the operation is still well defined since by Lemma \ref{lem:evenMC} only even degree graphs appear non-trivially in $\mu$.
By transferring this MC element to $\HGC_{1,2}' \simeq \Conv(\Ass^\vee,\Graphs_2)_\conn$ we obtain the desired (real) MC element in the hairy graph complex with tripod coefficient $\lambda$.
\end{proof}

\subsection{From real to rational MC elements}
Next, let us discuss how to ``rationalize'' a real MC element of $\HGC_{1,2}'$.
First, let us remark on a minor issue: The Shoikhet $L_\infty$-structure on $\HGC_{1,2}'$ is canonically defined only up to homotopy. However, we may always adjust this homotopy so that the Shoikhet $L_\infty$-structure is defined over $\Q$, and we may always transfer any MC element to an MC element with respect to this adjusted $L_\infty$-structure via the homotopy. (This process does not alter the tripod coefficient.)
We hence assume in this section that we have a MC element $\mu \in \HGC_{1,2,\R}'$, but the $L_\infty$-structure on $\HGC_{1,2,\R}'$ is already rational.

\begin{thm}
 Let $\mu\in \HGC_{1,2,\R}'$ be a Maurer-Cartan element with rational coefficient in front of the tripod graph. Then there is a Maurer-Cartan element $\nu\in \HGC_{1,2,\Q}'\subset \HGC_{1,2,\R}'$, such that the coefficients in front of the tripod graph of $\mu$ and $\nu$ coincide.
\end{thm}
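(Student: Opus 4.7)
The plan is to construct $\nu$ inductively order by order in a decreasing complete filtration $\mF^\bullet$ on $\HGC_{1,2}'$, using obstruction theory anchored by the formality statement of Theorem \ref{thm:main} (extended, per the Remark following its proof, to arbitrary MC elements with nonzero tripod coefficient). I would take $\mF^\bullet$ to be the filtration by loop order, equivalently by $\#\mathrm{edges}-\#\mathrm{internal\ vertices}$; this is compatible with the Shoikhet $L_\infty$-structure (see Section \ref{sec:HGC}). Set $\nu^{(0)} := \lambda\, T \in \HGC_{1,2,\Q}'$, which is rational and satisfies $\MC(\nu^{(0)}) \in \mF^1$. Inductively, suppose $\nu^{(k)} \in \HGC_{1,2,\Q}'$ satisfies $\MC(\nu^{(k)}) \in \mF^{k+1}$; its leading term $\omega_{k+1} := \MC(\nu^{(k)}) \bmod \mF^{k+2} \in \gr^{k+1}\HGC_{1,2,\Q}'$ is automatically a cocycle for the leading twisted differential $\bar d$ on the associated graded, which depends only on $\lambda T$ since higher-order corrections to $\nu^{(k)}$ contribute in strictly higher filtration degree. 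An extension $\nu^{(k+1)} := \nu^{(k)} + \nu_{k+1}$ (with $\nu_{k+1} \in \mF^{k+1}\HGC_{1,2,\Q}'$) satisfying $\MC(\nu^{(k+1)}) \in \mF^{k+2}$ exists if and only if the class $[\omega_{k+1}] \in H^{*}(\gr^{k+1}\HGC_{1,2,\Q}', \bar d)$ vanishes.

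To see $[\omega_{k+1}] = 0$ over $\Q$ it suffices, by flatness of the inclusion $\Q \hookrightarrow \R$ (cohomology of $\Q$-complexes commutes with this base change), to see that the class vanishes after extending scalars to $\R$. For the vanishing over $\R$ I would invoke the hypothesis that $\mu$ is a real Maurer-Cartan element with tripod coefficient $\lambda$. Theorem \ref{thm:main} (in the extended form noted in the Remark after its proof) then yields an $L_\infty$-quasi-isomorphism $V_\mu \colon \R[1]\oplus \GC_{2,\R}^2[1] \to (\HGC_{1,2,\R}')^\mu$ with abelian source; this triviality implies formal smoothness of the Maurer-Cartan variety at $\mu$. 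Because the graded-level differential $\bar d$ depends only on $\lambda T$ and not on the higher-order tail of the twist, this formal smoothness propagates to formal smoothness of the MC variety along any partial MC-extension of $\lambda T$, in particular along $\nu^{(k)}$ regarded over $\R$. Consequently the obstruction class $[\omega_{k+1}]_{\R}$ vanishes, whence so does $[\omega_{k+1}]_{\Q}$. Choosing a rational primitive $\nu_{k+1}$ completes the inductive step, and by completeness of $\mF^\bullet$ the sequence $(\nu^{(k)})$ converges to a rational MC element $\nu \in \HGC_{1,2,\Q}'$ with tripod coefficient $\lambda$.

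The main obstacle is precisely this propagation of formal smoothness from the specific real point $\mu$ to an arbitrary partial MC-extension of $\lambda T$. The key observation is that the formality isomorphism interacts well with the filtration: at the associated-graded level the twisted differential depends only on the leading term of the twist, so the common-leading-term structure guarantees that the obstruction cohomology for extending $\lambda T$ is controlled by the same complex as the one trivialized by $V_\mu$. A secondary technical point is to verify explicitly that the Shoikhet $L_\infty$-operations respect the chosen filtration, but this is built into the construction of the Shoikhet structure and is the content of the last bullet of the recollection in Section \ref{sec:HGC}.
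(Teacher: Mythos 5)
Your overall skeleton (induction on loop order; at each stage a rational linear problem that admits a real solution and hence a rational one) matches the paper's, and the flat base-change observation is fine. But there is a genuine gap at the crucial step, namely the vanishing of the obstruction class $[\omega_{k+1}]$ for \emph{your} partial solution $\nu^{(k)}$. The obstruction to extending a partial Maurer--Cartan solution one more filtration step depends on the whole partial solution, not only on its leading term; the fact that $\bar d$ on the associated graded depends only on $\lambda T$ tells you that the obstruction lives in a fixed cohomology group, not that it vanishes there. ``Formal smoothness at $\mu$'' (the $L_\infty$-triviality of $(\HGC_{1,2,\R}')^\mu$) does not propagate to arbitrary partial extensions of $\lambda T$: even for an abelian complete filtered complex, a partial solution of $dm\equiv 0$ modulo $\mF^{k+1}$ need not extend modulo $\mF^{k+2}$ unless its obstruction class $[\overline{dm}]\in H(\gr^{k+1})$ happens to vanish, and here the relevant cohomology is of the size of $H(\GC_2^2)$, not zero. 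So the $\nu^{(k)}$ obtained by choosing arbitrary rational primitives at each stage may well be a dead end, and the claimed ``propagation'' is exactly the assertion that needs proof.

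The missing ingredient is the mechanism by which one guarantees that the rational solution chosen at loop order $k$ is \emph{liftable}. The paper never extends a partial solution by obstruction theory; it keeps an honest real MC element in hand at every stage. Concretely, given $\mu$ rational up to loop order $k-1$ and a rational solution $\mu_k'$ of the loop-order-$k$ linear equation, one writes $\mu_k'-\mu_k = db_k + (\phi_\mu(x))_k$ with $x\in\GC_2$ closed, using the quasi-isomorphism $\phi_\mu\colon \GC_2\to(\HGC_{1,2}')^\mu$ furnished by Theorem \ref{thm:main} --- this is where the triviality theorem is really used: every closed element is exact up to the image of $\GC_2$ --- and then replaces $\mu$ by $\mu\cdot\exp(x)\cdot\gexp(b_k)$, using the action of $\Exp(\GC_{2,0,\closed})$ and the gauge action on $\MC(\HGC_{1,2}')$ from Section \ref{sec:GactionsonMC}. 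The result is again a genuine real MC element, now rational through loop order $k$, so the loop-order-$(k+1)$ equation is automatically consistent over $\R$ and hence over $\Q$. Your proposal omits these group actions entirely, and without them (or an equivalent device showing that every rational level-$k$ solution is the truncation of an actual real MC element) the inductive step does not close.
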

The argument is modeled on the standard proof that rational Drinfeld associators exist.
\begin{proof}
We assume that the tripod coefficient is $\lambda\neq 0$, otherwise we can just take $\nu=0$ trivially.
 We filter $\HGC_{1,2,\R}'$ by the loop order.
 We will induct on the loop order and apply the gauge group action and the action of $\Exp(\GC_2)$ as in section \ref{sec:GactionsonMC} to eliminate irrational coefficients in front of graphs.
 First, in loop order zero one may check that by the Maurer-Cartan equation $\mu$ must necessarily agree with \eqref{equ:tripodMCla}. In particular, all coefficients in loop order 0 are already rational if $\lambda$ is.
 Next suppose that $\mu$ is rational up to and including loop order $k-1$. We want to modify $\mu$ to a different MC element $\mu'$ that is rational up to and including loop order $k$.
 Let $\mu_j$ be the piece of $\mu$ of loop order $j$ so that $\mu=\sum_{j\geq 0}\mu_j$.
 The Maurer-Cartan equation in loop order $k$ reads
 \[
  d\mu_k = (\text{terms depending on $\mu_0,\dots,\mu_{k-1}$}),
 \]
 and in particular the right-hand side is rational by the induction hypothesis.
 Not the space of solutions to the above rational linear equation is a rational affine space modeled on the kernel of $d$. This rational affine space is not empty since a real solution ($\mu_k$) exists. Hence a rational solution $\mu_k'$ exists, and we must have
 \[
  d (\mu_k -\mu_k')=0. 
 \]
 Now, by the developments of the previous sections we have a quasi-isomorphism $\phi_\mu:\GC_2\to (\HGC_{1,2}')^\mu$. Thus we may find $b_k\in \HGC_{1,2}$ of degree 0 and loop order $k$ and a closed $x\in \GC_2$ of degree 0 such that 
 \[
  \mu_k +db_k + (\phi_\mu(x))_k = \mu_k',
 \]
 where $(\phi_\mu(x))_k$ is the part of loop order $k$.
Now, we have an action of $\Exp(\GC_{2,0,\closed})$ on $\MC(\HGC_{1,2}')$ according to section \ref{sec:GactionsonMC}, and furthermore 
\[
 \mu_k  + (\phi_\mu(x))_k = (\mu\cdot \exp(x))_k,
\]
where the subscript $k$ on the right-hand side shall indicate taking the part of loop order $k$.
We now define the desired Maurer-Cartan element
\[
 \mu' = \mu\cdot \exp(x) \cdot \gexp(b_k)
\]
by applying the action of $\exp(x)$ followed by the gauge action of the exponential of $b_k$.
Note that since both $x$ and $b_k$ live in loop order $k$, and since the actions respect the loop order filtrations, the elements $\mu$ and $\mu'$ agree in loop orders $< k$. The loop order $k$ piece of $\mu'$ is the $\mu_k'$ above by construction and hence rational. Hence $\mu'$ is rational in loop orders $\leq k$ by construction. Proceeding in this way and taking the limit we find the desired rational Maurer-Cartan element $\nu$.
\end{proof}

Together with Corollary \ref{cor:anylambdaMC} we immediately obtain the following result.

\begin{cor}
There exist rational Maurer-Cartan elements of $\HGC_{1,2}'$ whose tripod coefficient is any rational number.
\end{cor}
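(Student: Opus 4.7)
The plan is to combine the two results just obtained into a two-step construction. Given any target rational tripod coefficient $\lambda \in \Q$, I would first invoke Corollary \ref{cor:anylambdaMC} to produce a real Maurer-Cartan element $\mu_\R \in \HGC_{1,2,\R}'$ whose tripod coefficient is exactly $\lambda$. Since $\lambda$ is rational by hypothesis, the tripod coefficient of $\mu_\R$ is already rational, so $\mu_\R$ meets the hypothesis of the preceding rationalization theorem.

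The second step is then to apply that theorem directly: it produces a rational Maurer-Cartan element $\nu \in \HGC_{1,2,\Q}' \subset \HGC_{1,2,\R}'$ whose tripod coefficient agrees with that of $\mu_\R$, namely $\lambda$. This $\nu$ is the desired object, completing the proof.

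The only subtlety worth flagging is the degenerate case $\lambda = 0$, where one may simply take $\nu = 0$ without invoking either auxiliary result; the rationalization theorem's proof in fact already handled this case separately, so no additional argument is needed. I do not anticipate any real obstacle, since both ingredients have been established in full generality in the preceding two subsections, and the corollary is essentially a bookkeeping consequence. The substantive content sits in the preceding theorem (the inductive gauge-plus-$\Exp(\GC_2)$-action argument that preserves the tripod coefficient while clearing irrationalities loop order by loop order) and in Corollary \ref{cor:anylambdaMC} (rescaling the canonical real MC element $\mu$ by the even grading operator $(\sqrt{\lambda})^G$).
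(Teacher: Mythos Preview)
Your proposal is correct and matches the paper's approach exactly: the paper states the corollary as an immediate consequence of Corollary~\ref{cor:anylambdaMC} together with the preceding rationalization theorem, which is precisely the two-step argument you outline.
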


\subsection{Actions on the sets of Maurer-Cartan elements}
Let us finally mention the following result, which is an immediate corollary of Corollary \ref{cor:torsors}.

\begin{prop}
Let $(\HGC_{n,n}^L)^+\subset \HGC_{n,n}^L$ be the dg Lie subalgebra of series of graphs with vanishing coefficient of the line graph \eqref{equ:linegraph}. Similarly, let $(\HGC_{n-1,n}^T)^+\subset \HGC_{n-1,n}^T$ be the dg Lie subalgebra and $(\HGC_{1,2}')^{T'})^+\subset \HGC_{1,2}')^{T'}$ be the $L_\infty$-subalgebras of series of graphs with vanishing tripod coefficient.
Then the exponential group $\Exp(H^0(\GC_n))$ acts freely transitively on the spaces of gauge equivalences of Maurer-Cartan elements $\oMC((\HGC_{n,n}^L)^+)$ and $\oMC((\HGC_{n-1,n}^T)^+)$ for $n\geq 2$. The exponential group $\Exp(H^0(\GC_2))$ acts freely transitively on $\oMC(((\HGC_{1,2}')^{T'})^+)$.
\end{prop}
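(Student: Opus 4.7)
The strategy is to derive all three claims uniformly as applications of Corollary \ref{cor:torsors}, applied to the pre-Lie pairs and Maurer--Cartan data set up in section \ref{sec:HGC}. Fix one of the three cases, write $\eta \in \{L, T, T'\}$ for the relevant Maurer--Cartan element, and set $m = m_{\text{small}} + \eta$, with $\alpha$ the edge Maurer--Cartan element \eqref{equ:MCalpha}. The proof of Theorem \ref{thm:main} produced an $L_\infty$-quasi-isomorphism $V' := U \circ W \colon \K[1] \oplus \GC_n^2[1] \to \HGC$. I will argue that its restriction $V$ to the second summand factors through $(\HGC)^+$ and remains an $L_\infty$-quasi-isomorphism onto $(\HGC)^+$, after which Corollary \ref{cor:torsors} with $\alg h = \GC_n^2$ and $N = (\HGC)^+$ delivers the torsor statement. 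The loop-order filtrations on both sides supply the filtration assumption required by Corollary \ref{cor:torsors}.

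To see that $V$ factors through $(\HGC)^+$, recall that by Theorem \ref{thm:main_alg2} each Taylor component $V_r(x_1, \dots, x_r)$ with $x_i \in \GC_n^2$ is built from pre-Lie insertions of $\GC_n^2$-graphs into internal vertices of the summands of $m = m_{\text{small}} + \eta$. Insertions into $L$ vanish since $L$ has no internal vertex; insertions into $m_{\text{small}}$ replace its sole internal vertex by a non-trivial $\GC_n^2$-graph, so the output retains at least one internal vertex and hence cannot be the bare line graph. The same vertex counting handles the tripod cases: every non-trivial insertion of a $\GC_n^2$-graph (whose internal vertices are at least bivalent) into a tripod summand of $T$ or $T'$ produces a graph whose internal-vertex, edge, and hair totals strictly exceed those of the bare tripod graph, so the tripod coefficient of the output vanishes. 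For the Shoikhet case $(\HGC_{1,2}')^{T'}$, one has to additionally inspect the pre-Shoikhet $L_\infty$-corrections entering the Taylor components of $V$; a similar counting argument should show that these also cannot contribute to the tripod coefficient.

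To see that $V \colon \GC_n^2[1] \to (\HGC)^+$ is a quasi-isomorphism, consider the short exact sequence of chain complexes
\[
 0 \to (\HGC)^+ \to \HGC \to \K \to 0
\]
where the quotient records the coefficient in front of $L$ (resp.\ the simplest tripod graph). By the formula of Theorem \ref{thm:main_alg2}, the linear part of $V'$ sends $D \in \K$ to $V'_1(D) = Dm$, which has non-zero projection to the quotient (in the line case $Dm = L$, in the tripod cases the coefficient of the simplest tripod in $Dm$ equals $2$). Combining the quasi-isomorphism statement of Theorem \ref{thm:main} with the long exact sequence in cohomology of the above short exact sequence now forces the restriction $V$ to induce an isomorphism in cohomology.

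Applying Corollary \ref{cor:torsors} yields an $\Exp(H^0(\GC_n^2))$-equivariant bijection between $\oMC(\GC_n^2[1], 0) \cong H^0(\GC_n^2)$ and $\oMC((\HGC)^+)$, making both sides $\Exp(H^0(\GC_n^2))$-torsors. To match the statement of the Proposition one invokes the identification $H^0(\GC_n^2) \cong H^0(\GC_n)$, which holds because the additional cocycles of $\GC_n^2$ produced by bivalent-vertex polygons do not land in degree zero in any of the cases at hand. The most delicate step I expect is the combinatorial verification in the Shoikhet case: one must check that the non-pre-Lie corrections coming from the pre-Shoikhet $L_\infty$-operations to the Taylor components of $V$ also have zero tripod coefficient. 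This should ultimately reduce to an edge/vertex count, but requires inspecting the explicit form of the Shoikhet operations rather than relying on the purely pre-Lie arguments above.
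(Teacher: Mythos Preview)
Your proposal is correct and follows exactly the paper's approach: the paper simply declares the Proposition an ``immediate corollary of Corollary~\ref{cor:torsors}'' without further detail, so you have in fact supplied the hypothesis verification (the factoring of $V$ through $(\HGC)^+$, the short exact sequence argument for the quasi-isomorphism, the loop-order filtration) that the paper leaves implicit. Your hedging on the Shoikhet corrections is honest but not a gap relative to the paper, which does not carry out that verification either.
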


\begin{bibdiv}
\begin{biblist}

\bib{Turchin2}{article}{
author={Arone, G.},
author={Turchin, V.},
     TITLE = {On the rational homology of high-dimensional analogues of
              spaces of long knots},
   JOURNAL = {Geom. Topol.},
    VOLUME = {18},
      YEAR = {2014},
    NUMBER = {3},
     PAGES = {1261--1322},
}

\bib{Turchin3}{article}{
author={Arone, G.},
author={Turchin, V.},
title= {Graph-complexes computing the rational homotopy of high dimensional
  analogues of spaces of long knots},
journal={Ann. Inst. Fourier},
volume={65},
year={2015},
number={1},
pages={1--62},
}

\bib{CL}{article}{
author={Chapoton, Fr{\'e}d{\'e}ric},
author={Livernet, Muriel},
title={Pre-{L}ie algebras and the rooted trees operad},
JOURNAL = {Internat. Math. Res. Notices},
 YEAR = {2001},
    NUMBER = {8},
     PAGES = {395--408},
}

\bib{DolWill}{article}{
author={Dolgushev, Vasily},
author={Willwacher, Thomas},
title={Operadic Twisting -- with an application to Deligne's conjecture},
journal={J. of Pure and Appl. Alg.},
note={to appear},
year={2014},
}

\bib{DolRog}{article}{
author={Dolgushev, Vasily},
author={Rogers, Christopher L.},
     TITLE = {A version of the {G}oldman-{M}illson theorem for filtered
              {$L_\infty$}-algebras},
   JOURNAL = {J. Algebra},
    VOLUME = {430},
      YEAR = {2015},
     PAGES = {260--302},
}

%


\bib{Fr}{book}{
author={Fresse, Benoit},
title={Homotopy of Operads and Grothendieck-Teichm\"uller Groups},
year={2014},
note={book in preparation, available at \url{http://math.univ-lille1.fr/~fresse/OperadHomotopyBook/}},
}

\bib{FW}{article}{
author={Fresse, B.},
author={Willwacher, T.},
title={The intrinsic formality of $E_n$ operads},
note={arXiv:1503.08699},
year={2015},
}

\bib{FTW}{article}{
author={Fresse, B.},
author={Turchin, V.},
author={Willwacher, T.},
title={Mapping spaces of the $E_n$ operads},
note={in preparation},
year={2015},
}

\bib{GJ}{article}{
title={Operads, homotopy algebra and iterated integrals for double loop spaces},
author={Getzler, Ezra},
author={ Jones, J.D.S.},
note={arXiv:hep-th/9403055},
year={1994},
}

\bib{HLTV}{article}{
author={ Hardt, Robert},
author={ Lambrechts,Pascal},
author={Turchin,Victor},
author={ Voli{\'c},  Ismar},
title={ Real homotopy theory of semi-algebraic sets},
journal={ Algebr. Geom. Topol.},
volume={11},
number={5},
pages={2477--2545},
year={2011},
}

\bib{Kformal}{article}{
    AUTHOR = {Kontsevich, Maxim},
     TITLE = {Formal (non)commutative symplectic geometry},
 BOOKTITLE = {The {G}el$'$fand {M}athematical {S}eminars, 1990--1992},
     PAGES = {173--187},
 PUBLISHER = {Birkh\"auser Boston, Boston, MA},
      YEAR = {1993},
}


\bib{K2}{article}{
   author={Kontsevich, Maxim},
   title={Operads and motives in deformation quantization},
   note={Mosh\'e Flato (1937--1998)},
   journal={Lett. Math. Phys.},
   volume={48},
   date={1999},
   number={1},
   pages={35--72},
   issn={0377-9017},
   review={\MR{1718044 (2000j:53119)}},
   doi={10.1023/A:1007555725247},
}

\bib{LambrechtsTurchin}{article} {
    AUTHOR = {Lambrechts,Pascal},
    author={Turchin,  Victor },
     TITLE = {Homotopy graph-complex for configuration and knot spaces},
   JOURNAL = {Trans. Amer. Math. Soc.},
    VOLUME = {361},
      YEAR = {2009},
    NUMBER = {1},
     PAGES = {207--222},
      ISSN = {0002-9947},
 }
 
\bib{LVformal}{article}{
   AUTHOR = {Lambrechts, Pascal},
   author={Voli{\'c}, Ismar},
    TITLE = {Formality of the little {$N$}-disks operad},
  JOURNAL = {Mem. Amer. Math. Soc.},
   VOLUME = {230},
     YEAR = {2014},
   NUMBER = {1079},
    PAGES = {viii+116},
}

 \bib{LV}{book}{
author={Loday, J.-L. },
author={Vallette, B.},
title={Algebraic Operads},
number={346},
series= {Grundlehren der mathematischen Wissenschaften},
 publisher= {Springer},
 address={Berlin}, 
 year={2012},
}

\bib{Turchin1}{article}{
    AUTHOR = {Turchin, Victor},
     TITLE = {Hodge-type decomposition in the homology of long knots},
   JOURNAL = {J. Topol.},
    VOLUME = {3},
      YEAR = {2010},
    NUMBER = {3},
     PAGES = {487--534},
      ISSN = {1753-8416},
}

\bib{TW}{article}{
author= {Turchin,V.},
author={ Willwacher, T.},
title={Relative (non-)formality of the little cubes operads and the algebraic Cerf Lemma},
year={2014},
note={arXiv:1409.0163},
}

\bib{TW2}{article}{
author= {Turchin, Victor},
author={Willwacher, Thomas},
title={Commutative hairy graphs  and representations of $\mathit{Out}(F_r)$},
year={2016},
note={arXiv:1603.08855, to appear in J. Top.},
}

\bib{Shoikhet}{article}{
author={Shoikhet, Boris},
title={An $L_\infty$ algebra structure on polyvector fields},
eprint={arXiv:0805.3363},
date={2008},
}

\bib{Will}{article}{
  author={Willwacher, Thomas},
  title={M. Kontsevich's graph complex and the Grothendieck-Teichm\"uller Lie algebra},
  journal={Invent. Math.},
  volume={ 200},
  number={3},
  pages={ 671--760},
  year={ 2015},
}

\bib{WillSCModel}{article}{
  author={Willwacher, Thomas},
  title={Models for the n-Swiss Cheese operads},
  year={ 2015},
  note={arXiv:1506.07021},
}
%
%

\bib{WillDefQ}{article}{
author={Willwacher, Thomas},
title={Deformation quantization and the Gerstenhaber structure on the homology of knot spaces},
year={2015},
note={ arXiv:1506.07078},
}


\end{biblist}
\end{bibdiv}

\end{document}